\pgfplotsset{compat=newest}
\pgfplotsset{compat=newest}
\newcommand{\Z}{\mathbb{Z}}
\newcommand{\G}{\Gamma}
\newcommand{\R}{\mathcal{R}}
\newcommand{\A}{{A_\Gamma}}
\newcommand{\ua}{\underline{A}}
\newcommand{\ub}{\underline{B}}
\newcommand{\us}{\underline{S}}
\newcommand{\up}{\underline{P}}
\newcommand{\ut}{\underline{T}}
\newcommand{\uua}{\underline{\underline{A}}}
\newcommand{\uub}{\underline{\underline{B}}}
\newcommand{\Wh}{\mathrm{Wh}}
\newcommand{\red}{\mathrm{red}}
\newcommand{\MM}{\mathrm{MM}}
\newcommand{\supp}{\mathrm{supp}}
\newcommand{\cd}{\mathrm{cd}}
\newcommand{\cro}{\mathrm{cr}}
\newcommand{\lk}{\mathrm{lk}}
\newcommand{\st}{\mathrm{st}}
\newcommand{\Stab}{\mathrm{Stab}}
\newcommand{\Inn}{\mathrm{Inn}}
\newcommand{\PAut}{\Sigma\mathrm{PAut}}
\newcommand{\POut}{\Sigma\mathrm{POut}}
\newcommand{\Aut}{\mathrm{Aut}}
\newcommand{\Out}{\mathrm{Out}}
\newcommand{\z}{\underline{Z}}
\newcommand{\OO}{ \mathcal{O}}
\newcommand{\boundellipse}[3]
{(#1) ellipse (#2 and #3)}
\newtheorem{teo}{Theorem}[section]
\newtheorem{prop}[teo]{Proposition}
\newtheorem{lem}[teo]{Lemma}
\newtheorem*{teoA}{Theorem A}
\newtheorem*{teoB}{Theorem B}
\newtheorem*{teoC}{Theorem C}
\newtheorem*{teoD}{Theorem D}
\theoremstyle{definition}
\newtheorem{defi}[teo]{Definition}
\newtheorem{ej}[teo]{Example}
\theoremstyle{remark}
\newtheorem*{nota}{Remark}
\title{The McCullough-Miller complex for right angled Artin groups}
\author{Peio Ardaiz Galé}
\author{Conchita Martínez Pérez}
\date{today}
\subjclass[2020]{Primary 20J06, 20F36; Secondary 57M07, 55P20}
\keywords{Automorphisms of right angled Artin groups \and McCullogh Miller space \and cohomological dimension}
\thanks{\noindent
Both authors are partially supported by the Spanish Government PID2021-126254NB-I00. The first named author is also supported by Universidad P{\'u}blica de Navarra (grant code: Plan de promoci{\'o}n de grupos. ``{\'A}lgebra. Aplicaciones''). The second named author is also supported by Departamento de Ciencia, Universidad y Sociedad del 
Conocimiento del Gobierno de Arag{\'o}n (grant code: E22-23R: ``{\'A}lgebra y Geometr{\'i}a'')}
\begin{document}

\begin{abstract}

McCullough and Miller  constructed a contractible complex on which the pure symmetric automorphism group of a free group acts with free abelian stabilizers. 
This complex has been used for computations such as the cohomological dimension of these groups, their cohomology rings, and results about $\ell^2$-Betti numbers or BNRS-invariants. We generalize this construction to pure symmetric automorphism groups of arbitrary RAAGs and exhibit applications of this generalization.
\end{abstract}

\maketitle

\section{Introduction}
Right-angled Artin groups (RAAGs) are a fascinating family of groups that attracts considerable attention in current research in geometric group theory. Given  a finite simplicial graph \( \Gamma \) (i.e., a graph with no loops or double edges), the associated right angled Artin group is the group defined by the presentation 
\[
A_\Gamma = \langle V_\Gamma : [v, w] = 1, (v, w) \in E(\Gamma) \rangle.
\]
These groups interpolate between free abelian and free groups and their (outer) automorphism groups interpolate between $\mathrm{GL}_n(\Z)$ and $\Aut(F_n)$ (or $\Out(F_n)$ for the outer version). Automorphisms of a RAAG $\A$ that map each standard generator to a conjugate of itself are called {\sl pure symmetric} and they form a subgroup denoted $\PAut(\A)$ that contains the subgroup of inner automorphisms, so there is also an outer version $\POut(\A)$. In the case when $A_\G$ is the free group $F_n$, $\PAut(F_n)$ is the fundamental group of the configuration space of $n$ unknotted, unlinked circles in the sphere $S^3$ \cite{{JMcCM}}. There is also a version of this for symmetric automorphisms of other right-angled Artin groups \cite{BB22}. Lawrence showed that the groups $\PAut(A_\G)$ are finitely generated by partial conjugations and later Toinet  \cite{Toi} gave a finite presentation that was subsequently refined by Koban-Piggott \cite{KoPi}. The groups $\POut(\A)$  lie inside the {\sl Torelli} subgroup, which is the kernel of the epimorphism $\Out(A_\G)\to\mathrm{GL}(A_\G/A'_\G)$ induced by the action on the abelianization and therefore they are residually torsion free nilpotent \cite{Toi}. In particular, this implies that $\POut(A_\G)$ is torsion free. For any $A_\G$, it is possible to construct a new graph $\Delta$ such that,  $\A\cong\POut(A_\Delta)$ \cite{Wied}, on the other hand, not all $\POut(A_\G)$ are RAAGs \cite{DayWade}.

In \cite{MM}, McCullough and Miller construct a simplicial contractible complex that admits an action of the group of  symmetric automorphisms of a free product. In the particular case of a free group, this is a complex $\MM_n$ admitting an action of the group \( \POut(F_n) \) with free abelian stabilizers. The construction is combinatorial, based on the Whitehead poset, and the argument given there to show contractibility is particularly intricate. This complex does not seem to admit a CAT(0) geometry \cite{P}. The complex $\MM_n$ has subsequently been used to prove that $\POut(F_n)$ is a duality group \cite{NB},  to compute the cohomology ring \( \mathrm{H}^*(\POut(F_n),\mathbb{Z} \)) \cite{JMcCM}, or to characterize certain properties of the BNRS-invariants of the groups $\POut(F_n)$ \cite{ErshovZaremsky}.

In this paper, we revisit and extend the construction of the McCullough-Miller complex to pure symmetric automorphism groups of arbitrary RAAGs. This involves finding a $\G$-version $\Wh_\G$ of the Whitehead poset (see Section \ref{sec:construction}). Once that is done, we define the $\Gamma$-McCullough Miller complex $\MM_\G$ as the simplicial realization of the poset of equivalence classes of marked elements of the $\G$-Whitehead poset. The first properties of this construction follow easily and we get:

\begin{teoA} The group $\POut(A_\G)$ acts on $\MM_\G$
    with free abelian stabilizers and fundamental domain the simplicial realization of the $\G$-Whitehead poset $\Wh_\G$.
\end{teoA}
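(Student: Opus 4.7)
My plan is to mirror the classical McCullough--Miller strategy of \cite{MM} in the $\G$-equivariant setting. The first task is to exhibit the $\POut(A_\G)$-action on $\MM_\G$ concretely: a marked element of $\Wh_\G$ should be a pair $(W, m)$ consisting of a $\G$-Whitehead element $W$ and a marking $m$ given by a coset of the form $\phi\cdot\Stab_{\POut(A_\G)}(W)$, and $\psi \in \POut(A_\G)$ should act on equivalence classes by left translation $[(W, \phi\Stab(W))] \mapsto [(W, \psi\phi\Stab(W))]$. I would check that this action preserves the partial order on marked elements inherited from the relation on $\Wh_\G$, so that it passes to the simplicial realisation. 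The fundamental-domain claim is then immediate: every marked equivalence class contains a canonical trivially-marked representative, and the assignment $W \mapsto [(W, \Stab(W))]$ gives an order-preserving section of the orbit projection, realising $\Wh_\G$ as a strict fundamental domain.

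Next I would compute vertex stabilizers. By the way the action is set up, the stabilizer of the trivially marked vertex $[(W,\Stab(W))]$ is exactly $\Stab_{\POut(A_\G)}(W)$, the subgroup of pure symmetric outer automorphisms fixing $W$ in $\Wh_\G$. In the free-group case, a Whitehead element encodes a partition of the free basis into ``sides'', and its stabilizer is generated by partial conjugations of each generator by a distinguished element of its own side; these partial conjugations mutually commute. I expect $\Wh_\G$ to be constructed in Section \ref{sec:construction} in a way that parallels this, so that each $W$ comes equipped with a distinguished collection of partial conjugations whose images in $\POut(A_\G)$ generate $\Stab(W)$. For a general simplex $W_0 < \cdots < W_k$, the stabilizer is the intersection $\bigcap_i \Stab(W_i)$ of the vertex stabilizers.

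The main obstacle will be showing that these stabilizers are free abelian, and this is where the novelty of the RAAG setting shows up. Establishing commutativity requires analysing how partial conjugations of different standard generators interact in the presence of the defining commutation relations of $A_\G$; the definition of $\Wh_\G$ has to be engineered so that the two ``sides'' of a Whitehead element are compatible across every edge of $\G$. I would verify the commutation relations among the candidate generators of $\Stab(W)$ by direct computation using the finite presentation of $\POut(A_\G)$ due to Toinet and Koban--Piggott \cite{Toi, KoPi}. Torsion-freeness is then automatic: since $\POut(A_\G)$ is residually torsion-free nilpotent \cite{Toi}, every finitely generated abelian subgroup is free abelian, and intersections of free abelian subgroups are free abelian, handling the simplex stabilizers as well.
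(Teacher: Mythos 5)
Your plan is correct and follows essentially the same route the paper takes: marked vertex types form a coset poset (cf.~Remark \ref{rem:cosetposet}) with a trivially-marked copy of $\Wh_\G$ as strong fundamental domain, vertex stabilizers are generated by images of partial conjugations carried by the vertex type, commutativity modulo inner automorphisms is checked against the Koban--Piggott relations (the paper's Lemma \ref{lem:compatible1}, using Theorem \ref{teo:conmu} for $\PAut(A_\G)$ rather than $\POut(A_\G)$), and torsion-freeness follows because $\POut(A_\G)$ is residually torsion-free nilpotent. The one small divergence is how simplex stabilizers are handled: you pass to intersections (a subgroup of a free abelian group is free abelian), whereas the paper invokes Lemma \ref{lem:ordercarrier} to show the stabilizer of a chain equals the stabilizer of its minimal vertex; both suffice for Theorem A.
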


As for the original construction by McCullough and Miller, the most complicated part is proving contractibility. To do that we extend the technical definitions given by McCullough and Miller and combine them with results of Day \cite{BDay} to get an analysis of the behavior of cyclic word lengths in RAAGs under the action of pure symmetric automorphisms. Using those results we are able to generalize the ideas of \cite{MM} to our setting and show the following theorem, which is the main result of this paper:

\begin{teoB} The complex $\MM_\G$ is contractible.
\end{teoB}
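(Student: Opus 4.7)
The plan is to adapt the filtration-and-retraction strategy of McCullough–Miller \cite{MM} to the RAAG setting, using Day's peak reduction machinery \cite{BDay} in place of the classical Whitehead algorithm. First I would define a norm $\|\cdot\|$ on the vertex set of $\MM_\G$ by setting $\|x\|$ to be the sum over the standard generators $v\in V_\G$ of the cyclic word length of $\varphi(v)$ in $A_\G$, where $\varphi\in\PAut(A_\G)$ is the marking data attached to $x$ (this is well-defined on equivalence classes of marked elements because the stabilizer in $\POut(A_\G)$ preserves conjugacy classes of standard generators). This gives a filtration $\MM_\G^{\le 0}\subseteq \MM_\G^{\le 1}\subseteq\cdots$ by the subcomplexes spanned by vertices of norm at most $k$, and $\MM_\G=\bigcup_k\MM_\G^{\le k}$.

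The heart of the argument is to show that each inclusion $\MM_\G^{\le k-1}\hookrightarrow \MM_\G^{\le k}$ is a homotopy equivalence. Following McCullough–Miller, I would do this by producing, for every vertex $x$ of norm exactly $k$, a distinguished \emph{reductive} neighbour $r(x)$ of strictly smaller norm, together with an edge or simplex in $\MM_\G$ joining $x$ to $r(x)$ and contained in the link structure prescribed by $\Wh_\G$. To show such an $r(x)$ exists, one uses Day's RAAG analogue of Whitehead's lemma: if $\varphi$ has non-minimal cyclic word length sum on the standard generators, then some $\G$-Whitehead move (a generator of $\Wh_\G$, i.e.\ a partial conjugation or a certain dominated transformation) applied to $\varphi$ strictly decreases the norm. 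This is precisely where the $\G$-Whitehead poset structure in Section~\ref{sec:construction} is tailored so that the reductive move lies in the poset neighbourhood of $x$.

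Given the existence of reductive moves, the next step is the coherence argument: one has to check that the star of $x$ in $\MM_\G^{\le k}$ contracts onto the portion already in $\MM_\G^{\le k-1}$. In the classical case this is done via a simplex-by-simplex collapse where compatible reductive moves can be performed simultaneously, and the combinatorics of the Whitehead poset forces the required compatibility. The corresponding fact for $\Wh_\G$ should follow from the refinement relations defining the poset together with the fact, again due to Day, that minimally reductive $\G$-Whitehead factorizations of a composition can be ``sorted'' so that compatible moves commute. Iterating the retraction and passing to the colimit reduces the problem to contractibility of $\MM_\G^{\le 0}$, which by Theorem A is the simplicial realization of $\Wh_\G$ itself; I would deal with this base case by exhibiting a cone point, typically the class of the trivial refinement, exactly as in \cite{MM}.

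The main obstacle I anticipate is the coherence step. In the free group case, reductive Whitehead moves are partial conjugations by letters, and two such moves either commute or interact in a transparent way; for RAAGs, partial conjugations interact through the commutation relations of $\G$, and dominated transformations introduce further subtleties. Getting the deformation retraction to be simultaneously well-defined on all simplices of a given star will require a careful bookkeeping of which peak reductions of Day are available within a single orbit of the stabilizer and do not leave $\Wh_\G$. Once that combinatorial compatibility is established, the rest of the argument is structurally parallel to \cite{MM}.
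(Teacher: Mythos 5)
Your high-level plan (filter by a word-length norm, show each inclusion in the filtration is a homotopy equivalence using reductive moves produced by Day's peak reduction) matches the overall architecture of the paper's proof, but there are several concrete problems that would block the execution.

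First, your norm is wrong. You set $\|x\|$ to be the sum over $v\in V_\G$ of the cyclic word length of $\varphi(v)$. But $\varphi\in\PAut(A_\G)$ sends each $v$ to a conjugate of itself, and cyclic word length is conjugacy-invariant, so $|\varphi(v)|=1$ for every $v$. Your norm is therefore identically equal to $|V_\G|$, the filtration is trivial, and nothing can happen. The paper instead works with the fixed finite set $W_0=\{a_i a_j\mid i<j\}$ of \emph{products} of pairs of distinct generators (Section \ref{subsec:contractibility}) and measures $\|u\|_{W_0}=\sum_{g\in W_0}|g|_X$; the point is that conjugacy classes of products $a_ia_j$ do change under pure symmetric automorphisms, while those of single generators do not. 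You need to measure something like these pair-products, and Lemma \ref{lem:minred} is what makes $W_0$ the right choice (the nuclear vertex $[V_\G,\OO]$ is the unique minimizer).

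Second, you list ``partial conjugation or a certain dominated transformation'' as the moves of the $\G$-Whitehead poset. This is a category error: $\PAut(A_\G)$ is generated by partial conjugations alone (Laurence's theorem), and the $\G$-Whitehead poset encodes \emph{only} partial conjugations via $\G$-valid based partitions. Dominated transformations/transvections are not in $\PAut(A_\G)$ and are not in $\Wh_\G$. There \emph{is} a real subtlety here, but it is the opposite of what you flag: Day's peak reduction theorem is stated for the long-range generators $\Omega_l$, which includes transvections, so when you invoke it to produce a reductive move you must argue that the first move in the peak-reduced factorization is actually a partial conjugation. The paper handles this in the Existence Lemma (Lemma \ref{lem:existence}) by augmenting $W$ with powers $\{b^k\}$ for a large $k$; a transvection necessarily increases length on some $b^k$, so it cannot appear as the first (strictly decreasing) move. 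Your proposal omits this step entirely.

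Third, the ``coherence'' step you flag as the main obstacle really is the main obstacle, but your sketch of it stays at the level of ``compatible reductive moves commute.'' In the paper this step breaks into three separate retractions ($\hat{R}\to R\to S$, with $S$ shown contractible in a strengthened form allowing a compatibility constraint) and requires the technical machinery of Sections \ref{sec:notation}--\ref{sec:reductivity}: refinement, disjunction, $\us$-innermost based partitions, the Factorization Lemma \ref{lem:Facto}, and most delicately Lemma \ref{lem:newCZ}, which replaces a flawed lemma in \cite{MM} (see Example \ref{ex:issue}). Without naming which of these facts you need and verifying that they hold for $\G$-valid partitions, the coherence step remains a placeholder rather than an argument.

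Finally, a smaller point: your base case is not ``$\MM_\G^{\leq 0}=|\Wh_\G|$'' with the trivial refinement as cone point. With the correct norm the minimum value is $n(n-1)$, attained uniquely at $[V_\G,\OO]$; the base of the filtration is the closed star of that single nuclear vertex, which is contractible because it is a star.
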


It is worth mentioning that our argument follows most of the nice and extremely useful ideas of McCullough and Miller in the original proof, but at the same time we have tried to simplify or clarify the arguments when possible. We also fix a particular issue with one crucial technical Lemma (see Example \ref{ex:issue}).

Actions of groups on contractible complexes with nice cell stabilizers have many applications. As a well known example we may think of the action of a right angled Artin group on the Deligne complex, where cell stabilizers are parabolic subgroups, i.e., subgroups conjugated to the subgroup generated by complete subgraphs of the defining graph and as such are obviously free abelian.  In our case, cell stabilizers are also free abelian so the complex $\MM_\G$ can be seen as a Deligne-type complex for the groups $\POut(A_\G)$.  

We exhibit two applications of our complex. First, we show how to use the action of $\POut(A_\G)$ on $\MM_\G$ to compute the cohomological dimension of the group in terms of a rank notion that we define for elements of the Whitehead poset (see Section \ref{sec:applications}). This result generalizes the fact that $\POut(F_n)$ has cohomological dimension $n-2$ \cite{Collins}. In \cite{DayWade2} there is an algorithm to compute the virtual cohomological dimension of the whole group $\Aut(A_\G)$ of automorphisms of a RAAG. As the authors point out, that algorithm can also be used for certain relative versions of $\Out(A_\G)$ that include the subgroups $\POut(A_\G)$. However, that algorithm is complicated to apply. In the same paper, the authors show that in some cases the virtual cohomological dimension is realised by the rank of a free abelian subgroup, in the sense that there is a free abelian subgroup with rank the virtual cohomological dimension of the ambient group. We show that this is also true for the groups $\POut(A_\G)$.

\begin{teoC} The cohomological dimension of $\POut(A_\G)$ is 
$$\cd(\POut(A_\G))=\max\{r(\uua)\mid\uua\in \Wh_\G\}$$ 
and can be realized by a free abelian subgroup, that is, it is equal to the maximal rank of a free abelian subgroup of $\POut(A_\G)$. 
\end{teoC}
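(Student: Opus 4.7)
The plan is to use the action of Theorem A on the contractible complex of Theorem B. Since $\POut(A_\G)$ is torsion-free and every cell stabilizer is free abelian, this is precisely the setup that converts equivariant topology into a cohomological-dimension computation. Applying the standard equivariant estimate (see, for example, Brown's \emph{Cohomology of Groups}, VIII.11) to the action of $\POut(A_\G)$ on $\MM_\G$ gives
\[
\cd(\POut(A_\G))\;\leq\;\sup_{\sigma}\bigl(\dim\sigma+\cd(\Stab(\sigma))\bigr),
\]
where $\sigma$ ranges over cells and $\cd(\Stab(\sigma))$ equals the free abelian rank of $\Stab(\sigma)$. A $k$-simplex $\sigma$ corresponds to a strict chain $\uua_0<\uua_1<\cdots<\uua_k$ in $\Wh_\G$ with stabilizer $\bigcap_i\Stab(\uua_i)\subseteq\Stab(\uua_0)$. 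The key combinatorial input, to be extracted from the definition of $r$ in Section \ref{sec:applications} and the poset structure of $\Wh_\G$, is the inequality
\[
\dim\sigma+\mathrm{rank}(\Stab(\sigma))\;\leq\;r(\uua_0),
\]
reflecting that each strict refinement along the chain costs at least one unit of free abelian rank. Taking the supremum then collapses the bound to $\max\{r(\uua)\mid\uua\in\Wh_\G\}$.

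For the lower bound and realization, pick $\uua^\ast\in\Wh_\G$ attaining the maximum of $r$. Its vertex stabilizer is a free abelian subgroup of $\POut(A_\G)$ of rank $r(\uua^\ast)$. Monotonicity of $\cd$ under subgroups together with $\cd(\Z^m)=m$ yields
\[
\cd(\POut(A_\G))\;\geq\;r(\uua^\ast)=\max\{r(\uua)\mid\uua\in\Wh_\G\},
\]
matching the upper bound and simultaneously exhibiting a free abelian subgroup of $\POut(A_\G)$ whose rank equals $\cd(\POut(A_\G))$, which is the realization assertion.

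The delicate step, and in my view the main obstacle, is the combinatorial inequality $\dim\sigma+\mathrm{rank}(\Stab(\sigma))\leq r(\uua_0)$. Establishing it requires, for each covering relation $\uua<\uub$ in $\Wh_\G$, identifying a partial conjugation in $\Stab(\uua)$ whose class is nontrivial in the free abelian quotient $\Stab(\uua)/(\Stab(\uua)\cap\Stab(\uub))$, and iterating along chains. I expect this to reduce to clean bookkeeping once $\Stab(\uua)$ is described explicitly via the partial conjugations indexed by the combinatorial data of $\uua$, mirroring the free-group calculation in \cite{Collins}.
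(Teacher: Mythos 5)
Your overall strategy coincides with the paper's: apply Brown's equivariant bound $\cd G \leq \sup_\sigma(\dim\sigma + \cd G_\sigma)$ to the action on $\MM_\G$, compute stabilizers as free abelian groups via the rank of the bottom of the chain, and realize the lower bound by the stabilizer of a rank-maximizing vertex. However, two points in your sketch have the combinatorics backwards. First, the inequality you state,
\[
\dim\sigma+\mathrm{rank}(\Stab(\sigma))\;\leq\;r(\uua_0),
\]
cannot hold: since $\Stab(\sigma)=\Stab(\uua_0)$ has rank exactly $r(\uua_0)$, the left side equals $k+r(\uua_0)$, and for $k\geq 1$ this would force $k\leq 0$. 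The correct bound is
\[
\dim\sigma+\mathrm{rank}(\Stab(\sigma))\;=\;k+r(\uua_0)\;\leq\;r(\uua_k),
\]
using that $r$ strictly increases along each covering relation; taking the supremum then gives $\max\{r(\uua)\mid\uua\in\Wh_\G\}$ because it ranges over tops of chains, not bottoms. Second, in your ``delicate step'' the inclusion of stabilizers is reversed: by Lemma \ref{lem:ordercarrier}, $\uua<\uub$ implies $\Stab(\uua)\subseteq\Stab(\uub)$, so the quotient you write, $\Stab(\uua)/(\Stab(\uua)\cap\Stab(\uub))$, is trivial. What you need instead is a partial conjugation in $\Stab(\uub)$ with nontrivial class in $\Stab(\uub)/\Stab(\uua)$, showing that refining a vertex type genuinely adds a generator, which is straightforward from the description of stabilizers via partial conjugations (each petal split yields a new independent generator modulo the inner automorphisms). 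With these corrections the argument is exactly the paper's.
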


 Recently, Corrigan has also obtained Theorem C by different methods. He constructs a contractible cube complex with a proper action of the group of symmetric outer automorphisms $\Sigma\Out(A_\Gamma)$ which is a finite index extension of our group $\POut(A_\Gamma)$ \cite{Corrigan}.   Abgrall has extended that construction to other automorphisms groups. Moreover, these complexes realise the cohomological dimension of the corresponding groups. In fact,  one can see Corrigan's and Abgrall's complexes as a version of the Salvetti complex for these groups, whereas, as mentioned above, ours would correspond to the Deligne complex.

For our next application, we consider the $\ell^2$-cohomology. McCammond and Meier used the action of $\POut(F_n)$ on the McCullough-Miller complex \cite[Theorem 8.2]{McCM} to describe the $\ell^2$ cohomology groups of $\POut(F_n)$. With the complex MM$_\G$ one gets the same result under an extra assumption on centralizers; and a weaker version for the general case:

\begin{teoD} For a right-angled Artin group $A_\G$, the next equality about  von Neumann dimensions holds:
$$\textrm{dim}_{\mathcal{N}G}(H^i(\textrm{MM} _\G;\mathcal{N}(\POut(A_\G))=\textrm{dim}_{\mathcal{N}G}(\mathcal{N}(\POut(A_\G))\otimes\overline{\mathrm{H}}^{i-1}(|\Wh_\G^0|))$$ where $\mathcal{N}(\POut(A_\G))$ is the group von Neumann algebra.

In addition, if $\POut(A_\G)$ has no non-trivial element whose centralizer has finite index in $\POut(A_\G)$, then the $\ell^2$-cohomology groups of $\POut(A_\G)$ are
$$\mathcal{H}^i(\POut(A_\G))\cong\ell^2(\POut(A_\G))\otimes\overline{\mathrm{H}}^{i-1}(|\Wh_\G^0|)$$
\end{teoD}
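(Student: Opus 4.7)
The plan is to exploit Theorems A and B: $\POut(A_\Gamma)$ acts on the contractible complex $\MM_\Gamma$ with free abelian stabilizers and fundamental domain $|\Wh_\Gamma|$. Writing $G=\POut(A_\Gamma)$ for brevity, I would set up the equivariant cellular cochain complex of $\MM_\Gamma$ with coefficients in $\mathcal{N}(G)$. By choosing orbit representatives inside $|\Wh_\Gamma|$, the cochain module in degree $n$ decomposes as a direct product, over the $n$-simplices $\sigma$ of $\Wh_\Gamma$, of the fixed-point submodules $\mathcal{N}(G)^{\Stab(\sigma)}$.

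Next, I would invoke the standard vanishing principle for von Neumann dimensions: for any infinite subgroup $H\le G$, $\dim_{\mathcal{N}(G)}(\mathcal{N}(G)\otimes_{\Z H}\Z)=0$. By Theorem A every non-trivial stabilizer is free abelian of positive rank, so the von Neumann dimension of the cochain complex is carried entirely by the simplices with trivial stabilizer. These should be exactly the simplices of the subcomplex $|\Wh_\Gamma^0|$ (the rank-$0$ part in the sense of Theorem C). Since $\Wh_\Gamma$ carries a global minimum element whose stabilizer is all of $G$, the realization $|\Wh_\Gamma|$ is a cone over $|\Wh_\Gamma^0|$, which accounts for both the appearance of reduced cohomology and the degree shift by $1$ in the statement. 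Taking cohomology of the collapsed complex yields the first equality.

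For the second statement I need to upgrade the dimension equality to an isomorphism of Hilbert $G$-modules. Although infinite amenable stabilizers automatically have trivial reduced $\ell^2$-Betti contributions, their unreduced Hilbert-module contributions to the cochain complex need not vanish a priori. The centralizer hypothesis is tailored so that each non-trivial cell stabilizer fails to produce such ghostly contributions, exactly as in the original McCammond--Meier argument \cite{McCM}: with it in place the $\ell^2$-cochain complex collapses isometrically onto $\ell^2(G)\otimes\overline{C}^{\,*-1}(|\Wh_\Gamma^0|)$, yielding the claimed isomorphism after passage to cohomology.

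The main obstacle is the combinatorial identification of the trivial-stabilizer locus inside $\Wh_\Gamma$ with $|\Wh_\Gamma^0|$, together with the verification of the cone structure; these require a careful inspection of which partial conjugations fix which simplices, using the machinery developed in Section \ref{sec:construction} and adapting the corresponding steps of \cite{McCM} to the RAAG setting.
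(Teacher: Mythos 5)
Your overall strategy (re-deriving the dimension equality directly from the equivariant cochain complex rather than citing a general theorem) differs from the paper's proof, which simply invokes Theorem 8.1 of \cite{McCM} and checks its three hypotheses. That would be a perfectly legitimate alternative route; however, your execution has a fundamental error in identifying which cells carry the von Neumann dimension.

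You write that the trivial-stabilizer simplices ``should be exactly the simplices of the subcomplex $|\Wh_\G^0|$'' and that the global minimum element of $\Wh_\G$ has ``stabilizer all of $G$.'' Both claims are backwards. By Lemma \ref{lem:rankstab}, the stabilizer of a vertex $[X,\uua]$ is free abelian of rank $r(\uua)$, and the nuclear vertex has rank $0$; thus $[V_\G,\OO]$ has \emph{trivial} stabilizer, not stabilizer all of $G$. Conversely, $\Wh_\G^0$ is defined by \emph{removing} the nuclear vertex, so every vertex of $\Wh_\G^0$ has positive rank and hence an \emph{infinite} (free abelian of positive rank) stabilizer. In the language of McCammond--Meier, $F[\infty]=|\Wh_\G^0|$ is precisely the infinite-isotropy locus; the trivial-stabilizer cells are those incident to the cone point, which are the cells \emph{not} in $\Wh_\G^0$. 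This matters for the form of the conclusion: with your identification one would naively expect the answer to involve the unreduced cohomology of $\Wh_\G^0$ in the same degree, whereas the degree shift by one and the appearance of reduced cohomology arise precisely because $|\Wh_\G|$ is the cone on the infinite-isotropy subcomplex $|\Wh_\G^0|$ and the dimension is concentrated on the new (trivial-stabilizer) cells of the coning. If you reverse the identification of the two loci, the rest of your sketch aligns with the proof of McCammond--Meier's Theorem 8.1.
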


Here, $\Wh_G^0\leq \Wh_\G$ is the subposet of the $\G$-Whitehead poset obtained after removing the nuclear vertex, so $|\Wh_\G|$ is the cone over $|\Wh_\G^0|$.

\medskip

The paper is organized as follows: in Section \ref{sec:MMfree} we review the construction of the McCullough-Miller complex for $\POut(F_n)$. This section is not strictly necessary but it can be helpful for the rest of the paper.
In Section \ref{sec:puresym} we recall some properties of the groups $\PAut(A_\G)$ that will be needed in the sequel. For example, we describe the generating set given by partial conjugations given in \cite{Lau} and the associated presentation \cite{Toi}, \cite{KoPi}.
The construction of the $\Gamma$-Whitehead poset and of the complex $\MM_\G$ is carried out in Section \ref{sec:construction}.
Sections \ref{sec:notation} and \ref{sec:reductivity} are technical. In Section \ref{sec:notation}, we extend important notions from \cite{MM} such as compatibility, refinement, disjunction; in Section \ref{sec:reductivity} we extend the notion of reductivity. Some of the results of \cite{MM} can be translated without major changes to our setting, which is mostly the case in Section \ref{sec:notation}. However, results on reductivity (Section \ref{sec:reductivity}) require a different approach, partly because of the issue mentioned above. To achieve this, we make a crucial use of results of \cite{BDay} in this Section. We also prove Theorem A in Section \ref{sec:notation}. Theorem B is shown in Section \ref{sec:contractibility}. Our argument closely follows the proof of contractibility of \cite{MM}, but we provide alternative, somewhat shorter arguments at some points. Finally, in Section \ref{sec:applications} we prove Theorem C and Theorem D.

{\sl Acknowledgments:} We would like to thank Ric Wade for useful conversations, and Matt Day for his help with how to use the results in \cite{BDay} in our setting, and in particular for pointing out the argument in the proof of Lemma \ref{lem:existence}. We also thank Gabriel Corrigan for conversations regarding \cite{Corrigan}.

\section{The McCullough-Miller Complex}\label{sec:MMfree}
Let $F_n$ be the free group of rank $n$ with basis $X$, and let $\PAut(F_n)$ denote the group of pure symmetric automorphisms of $F_n$, i.e., the subgroup of Aut$(F_n)$ consisting of automorphisms that map each element of $X$ to a conjugate of itself. This group contains the subgroup of inner automorphisms and the corresponding outer version is denoted $\POut(F_n)=\PAut(F_n)/\Inn(F_n)$. In \cite{MM} McCullough and Miller introduced a family of contractible complexes denoted MM$_n$
which admit an action by $\POut(F_n)$. This action is not free but stabilizers are free abelian. Moreover, there is a fundamental domain that can be described combinatorially as follows. 

\subsection{Labeled bipartite trees, vertex types and based partitions}
We begin by defining the elements that will be the vertices of the space that we are constructing.

\begin{defi}[$n$-labeled bipartite trees or vertex types]
An {\sl $[n]$-vertex type} or {\sl $[n]$-labeled bipartite tree} (we will often omit $[n]$) is a tree $T$  together with an embedding $\iota$ from $[n]=\{1,\ldots,n\}$ to the set of nodes of $T$ satisfying the following two conditions. Elements in the image of $\iota$ are called labeled nodes, and all other nodes are called unlabeled. Moreover:

        \begin{itemize}
            \item [1)] each edge of $T$ has exactly one labeled endpoint, and
            \item[2)] each node with valence 1 is labeled.
        \end{itemize}
    
\end{defi}
We say that two vertex types are equivalent if there is a label preserving graph isomorphism between them. The {\sl rank} of a vertex type $T$ is $m-1$ where $m$ is the number of unlabeled nodes in $T$. There is only one $[n]$-vertex type of rank 0, namely the tree with a single unlabeled node of valence $n$. This is called the {\sl nuclear vertex} and is denoted by $\OO$. In Figure \ref{fig:ArbEtEJ} we have represented possible vertex types for $n=4$; by permuting the labels of the vertex types $A, B$ and $C$, we obtain all the possible vertex types.

\begin{figure}[h]
    \centering
    \begin{tikzpicture}
     \node[shape=circle,draw=black, fill=black, label={above left: 1}, scale=0.75] (1) at (-4,3) {};
    \node[shape=circle,draw=black, fill=black,  label={above right: 2}, scale=0.75] (2) at (-2,3) {};
    \node[shape=circle,draw=black, fill=black, label={below right: 3}, scale=0.75] (3) at (-2,1) {};
    \node[shape=circle,draw=black, fill=black, label={below left: 4}, scale=0.75] (4) at (-4,1) {};
    \node[shape=circle,draw=black, fill=black, scale=0.5] (A) at (-3,2) {};

    \path (1) edge (A);
    \path (3) edge (A);
    \path (2) edge (A);
    \path (4) edge (A);

    \node at (-4.7,2) {$\OO =$};

     \node[shape=circle,draw=black, fill=black, label={above : 1}, scale=0.75] (5) at (0.5,2) {};
    \node[shape=circle,draw=black, fill=black, label={above : 2}, scale=0.75] (6) at (2.5,2) {};
    \node[shape=circle,draw=black, fill=black, label={above: 3}, scale=0.75] (7) at (4.5,2) {};
    \node[shape=circle,draw=black, fill=black, label={above: 4}, scale=0.75] (8) at (6.5,2) {};
    \node[shape=circle,draw=black, fill=black, scale=0.5] (B) at (1.5,2) {};
    \node[shape=circle,draw=black, fill=black, scale=0.5] (C) at (3.5,2) {};
    \node[shape=circle,draw=black, fill=black, scale=0.5] (D) at (5.5,2) {};

     \path (5) edge (B);
    \path (6) edge (B);
    \path (6) edge (C);
    \path (7) edge (C);
    \path (8) edge (C);

    \node at (-0.2,2) {$B =$};

    \node[shape=circle,draw=black, fill=black, label={above left: 1}, scale=0.75] (9) at (-4,-0.5) {};
    \node[shape=circle,draw=black, fill=black,  label={above : 4}, scale=0.75] (10) at (0,-1.5) {};
    \node[shape=circle,draw=black, fill=black, label={above: 3}, scale=0.75] (11) at (-2,-1.5) {};
    \node[shape=circle,draw=black, fill=black, label={below left: 2}, scale=0.75] (12) at (-4,-2.5) {};
    \node[shape=circle,draw=black, fill=black, scale=0.5] (E) at (-3,-1.5) {};
    \node[shape=circle,draw=black, fill=black, scale=0.5] (F) at (-1,-1.5) {};

    \path (9) edge (E);
    \path (12) edge (E);
    \path (11) edge (F);
    \path (11) edge (E);
    \path (10) edge (F);

    \node at (-4.7,-1.5) {$A =$};

     \node[shape=circle,draw=black, fill=black, label={above : 1}, scale=0.75] (13) at (2.5,-1.5) {};
    \node[shape=circle,draw=black, fill=black, label={below : 3}, scale=0.75] (14) at (4.5,-1.5) {};
    \node[shape=circle,draw=black, fill=black, label={above: 4}, scale=0.75] (15) at (6.5,-1.5) {};
    \node[shape=circle,draw=black, fill=black, label={above: 2}, scale=0.75] (16) at (4.5,0.5) {};
    \node[shape=circle,draw=black, fill=black, scale=0.5] (G) at (3.5,-1.5) {};
    \node[shape=circle,draw=black, fill=black, scale=0.5] (H) at (5.5,-1.5) {};
    \node[shape=circle,draw=black, fill=black, scale=0.5] (I) at (4.5,-0.5) {};

    \path (13) edge (G);
    \path (14) edge (G);
    \path (14) edge (I);
    \path (14) edge (H);
    \path (16) edge (I);
    \path (15) edge (H);

    \node at (1.8,-1.5) {$C =$};
    
    \end{tikzpicture}
   \caption{Several [4]-vertex types}
    \label{fig:ArbEtEJ}
\end{figure}

For each $[n]$-vertex type $T$ and each $i\in[n]$ we may define a partition $\ut_i$ of the set $[n]$ as follows: remove the vertex labeled $i$ from $T$, and then consider the sets of labels of each of the resulting connected components. We add to this family the set $\{i\}$, and we get the partition $\ut_i$ where the set $\{i\}$ has a distinguished role: we will say that it is the {\sl operative factor} of the partition. This kind of partitions, i.e., partitions of the set $[n]$ having a distinguished one-element set like $\ut_i$, will be called {\sl based partitions}. We will always place the operative factor in the first position. Following \cite{MM}, we will call the rest of elements in the partition {\sl petals}, where the name refers to the fact that a useful way to represent a based partition is as a flower diagram with the operative factor in the middle and the rest of subsets forming the petals. The {\sl length} of a based partition will be the number of petals and based partitions of length one are called {\sl trivial}.

For example,  for the vertex type $B$ of Figure \ref{fig:ArbEtEJ}, we have the following based partitions
\[\begin{aligned}
    \ub_1&=\{\{1\},\{2,3,4\}\},\\
    \ub_2&=\{\{2\},\{1\},\{3,4\}\},\\
\ub_3&=\{\{3\},\{1,2\},\{4\}\},\\
\ub_4&=\{\{4\},\{1,2,3\}\}.\\
\end{aligned}
\]
In Figure \ref{fig:flower}, we can find the flower diagram representing the based partition $\ub_2$.

\begin{figure}
\begin{tikzpicture}
   \centering
        \draw (10,0) node[minimum size=1cm,circle,draw] (O) {2};
       \draw (O.80) to[out=50,in=-10,looseness=10]  node[pos=0.1](C1){} node[pos=0.4](C2){} node[pos=0.8](C3){} node[pos=0.9](C4){}  (O.-40);
\node[fit=(C1)(C2)(C3)(C4)]{$3$\quad$4$};      
 \draw (O.120) to[out=135,in=210,looseness=8]  node[pos=0.1](D1){} node[pos=0.4](D2){} node[pos=0.8](D3){} node[pos=0.9](D4){}  (O.-110);
\node[fit=(D1)(D2)(D3)(D4)]{$1$};      
    \end{tikzpicture}
    \caption{A flower diagram}
    \label{fig:flower}
    \end{figure}
\smallskip

One can recover the vertex type $T$ from the set of based partitions $\ut_1,\ldots,\ut_n$ \cite{MM}. In particular, note that the nuclear vertex corresponds to the family of trivial partitions.

\subsection{The Whitehead poset} Next, we define a partial order on the set of vertex types. To do that, let $T$ be a vertex type. We say that $\hat T$ is obtained from $T$ by {\sl folding} if $\hat T$ is the vertex type obtained by identifying a pair of edges $e_1\neq e_2$ of $T$ that share 
a common labeled endpoint.  Folding reduces the rank of a tree by 1. For example, in
Figure \ref{fig:ArbEtEJ}, folding $B$ at 2 yields $A$, folding $A$ at 3 yields $\OO$,
and one of the three possible foldings of $C$ at 3 yields $A$. If $\hat T$ is obtained from $T$ by a series of foldings, we put $\hat T<T$.

\begin{defi}[Whitehead poset]
    The Whitehead poset $\Wh_n$ is the poset consisting
of all $[n]$-vertex types under the partial order defined above. Observe that for any vertex type $T$ we have $\OO\leq T$, and the poset distance between $\OO$ and $T$ is precisely the rank of $T$. It is also easy to verify that every maximal chain in the poset $\Wh_n$ has length $n-1$.
\end{defi}

To construct a complex admitting an action of the group $\PAut(F_n)$, we need the notion of markings that we consider next.

\begin{defi}[Markings]\label{def:marking}
    A {\sl marking} of a vertex type $T$ or of a based partition $\ut_i$ is an ordered 
basis $Y=\{y_1,\ldots,y_n\}$ of $F_n$ such that each $y_i$ is a conjugate of
$x_i\in X$. The {\sl marked vertex type} is the pair $(Y,T)$, and the {\sl marked based partition} $(Y,\ut_i)$.  We see the marking as a relabeling of the nodes of $T$ or of $\ut_i$ where $y_j$ is associated with the node $j$, in particular the element $y_i$ is the operative factor of $(Y,\ut_i)$.
\end{defi}

With this definition, observe that the set of marked vertex types is also a poset with $(Y',T')\leq(Y,T)$ if and only if $Y'=Y$ and $T'\leq T$. Also, there is an obvious action of $\PAut(F_n)$ on this poset via $\alpha(Y,T)=(\alpha(Y),T)$ for $\alpha\in\PAut(F_n)$. However, in order to get a space with an action of the outer automorphism group, we need to construct a quotient of this poset.

\begin{defi}[Carried automorphisms] Let $(Y,\ut_i)$ be a marked based partition with marking $Y=\{y_1,\ldots,y_n\}$ and operative factor $y_i$. We say that $\alpha\in\Aut(F_n)$ is carried by $(Y,\ut_i)$ if
     \begin{itemize}
         \item[i)] $\alpha(y_i)=y_i$, i.e., the operative factor gets fixed.
         \item[ii)] For all$ y_j$, $\alpha(y_j)$ is the result of conjugating $y_j$ by a power of $y_i$.
         \item[iii)] If $y_j,y_k$ lie in the same petal of $\underline{T}_i$, then they get conjugated by the same power of $y_i$ when applying $\alpha$.
     \end{itemize}
     We say that $\underline{T}_i$ is a \textit{full carrier} of $\alpha$ if it carries $\alpha$ and each petal gets conjugated by a different power of $y_i$. We also say that an automorphism $\alpha$ is {\sl carried by the vertex type} $(Y,T)$ if $\alpha=\alpha_1\cdots\alpha_k$, where each $\alpha_j$ is carried by some based partition $(Y,\underline{T_j})$ of $(Y,T)$.
\end{defi}

We can use the notion of carried automorphisms to define an equivalence relationship $\equiv$ in the set of marked vertex types as the relation generated by 
$$(Y_1,T)\equiv(Y_2,T)\text{ if there is $\alpha\in\PAut(F_n)$ carried by $(Y_1,T)$ such that $\alpha(Y_1,T)=(Y_2,T)$}.$$
The equivalence class of an element $(Y,T)$ will be denoted $[Y,T]$.

\begin{defi}[The McCullough-Miller poset and complex]
    The McCullough-Miller poset  is the poset of marked $[n]$-vertex types, modulo the equivalence relation $\equiv$ defined above.
        The McCullough-Miller complex $\MM_n$ is the simplicial realization of the McCullough-Miller poset.
\end{defi}

\begin{teo}[McCullough-Miller \cite{MM}]
    The complex $\MM_n$ is a contractible complex of dimension $n-2$.
\end{teo}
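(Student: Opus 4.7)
The dimension claim follows directly from the observation already made in the text that maximal chains in $\Wh_n$ have length $n-1$. Since the equivalence relation $\equiv$ preserves the underlying vertex type (and in particular its rank, because carried automorphisms only change the marking, not the tree), maximal chains in the McCullough--Miller poset also contain $n-1$ elements. Hence the simplicial realization has dimension $n-2$.

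For contractibility, my plan is to follow the Whitehead-algorithm strategy: pick a distinguished vertex and construct a deformation retraction of $\MM_n$ onto it. Fix the standard basis $X = \{x_1, \ldots, x_n\}$ of $F_n$ and consider the equivalence class $[X, \OO]$ of the nuclear vertex with its canonical marking. For each marking $Y = \{y_1, \ldots, y_n\}$, assign a complexity $c(Y) = \sum_i |y_i|_X$ using cyclic word length with respect to $X$. One first checks that $c(Y) \geq n$, with equality precisely when $[Y, \OO] = [X, \OO]$, so that the minimum stratum of $c$ is the single vertex we wish to retract onto.

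The key technical step is the existence of reductive elements: for any equivalence class $[Y, T]$ with $c(Y) > n$, one produces a vertex type below $T$ in the poset whose associated carried (Whitehead-type) automorphism strictly decreases $c$. Granted this, one stratifies $\MM_n$ by $c$-value and shows that each stratum deformation retracts onto the union of strictly lower strata by using the reductive element at each class to push simplices downward. Iterating this finitely many times collapses $\MM_n$ onto the minimum stratum, which is the single vertex $[X,\OO]$, yielding contractibility.

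The main obstacle is the reductive step, that is, showing that the choices of Whitehead reductions can be made globally consistent across the simplicial complex rather than merely pointwise. This is precisely what McCullough and Miller achieve through their technical machinery of compatibility, disjunction, and refinement of based partitions; verifying that these combinatorial conditions assemble into a coherent deformation retraction is the crux of the proof. A secondary subtlety is ensuring that the chosen reductions at each simplex agree along common faces, so that one genuinely obtains a deformation retraction of the whole complex rather than a collection of unrelated local retractions.
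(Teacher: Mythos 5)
Your dimension claim is correct and matches the remark in the paper: maximal chains in $\Wh_n$ have length $n-1$, the equivalence $\equiv$ preserves the underlying vertex type and hence the rank, so $\MM_n$ has dimension $n-2$.

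The contractibility sketch has a concrete problem, however, before one even gets to the admitted technical core. Your complexity function $c(Y)=\sum_i |y_i|_X$ computed with cyclic word length is \emph{constant}. By definition of a marking, each $y_i$ is a conjugate of $x_i$, and cyclic word length is invariant under conjugation, so $|y_i|_X=|x_i|_X=1$ for every $i$ and every marking $Y$. Hence $c(Y)=n$ for all $Y$, your claimed characterization of the equality case is false, and the stratification is trivial: there is nothing to push down. (If instead you used ordinary, non-cyclic length, the quantity would no longer be well defined on $\equiv$-classes, since the equivalence relation absorbs inner automorphisms.) The device McCullough and Miller actually use --- and which the present paper adopts as $W_0=\{a_ia_j\mid i<j\}$ with $\|u\|_{W_0}=\sum_{g\in W_0}|g|_X$, see Lemma~\ref{lem:minred} --- measures cyclic lengths of \emph{products of pairs} of generators. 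This is conjugation-invariant and yet genuinely detects the nuclear vertex: one shows $\|u\|_{W_0}\geq n(n-1)$ with equality iff $u=[V_\G,\OO]$. Choosing the right weight set is not a cosmetic issue; the whole peak-reduction argument hinges on it.

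Beyond that, the global architecture you describe (deformation retract each stratum onto lower strata) is also not what the paper does. Rather than a single retraction, it assembles $\MM_\G$ as an increasing union of stars of nuclear vertices ordered by height, and shows inductively that each new star meets the previously accumulated subcomplex in a contractible set; the latter is analyzed via Quillen's poset lemmas (Lemmas~\ref{lem:posetmap}, \ref{lem:connically}, \ref{lem:posetMap2}) applied to the poset of reductive vertex types, with the refinement/disjunction machinery controlling compatibility. You correctly flag this as ``the crux,'' but it is left entirely as an acknowledged gap, and the starting complexity function feeding into it is wrong as stated.
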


The statement about the dimension is a direct consequence of the fact, stated above, that maximal chains in the poset have length $n-1$. The complex constructed in the following sections is a generalization of this space.

The action of the group $\PAut(F_n)$ on the set of marked vertex types yields a well defined action on $\MM_n$ that factors through the natural projection $\PAut(F_n)\to\POut(F_n)$. This action has a fundamental
domain which we can identify with the Whitehead poset $|\Wh_n|$, obtained by restricting to classes $[X,T]$ where the marking is the defining
basis. 

The
stabilizer of a vertex $[Y,T]\in\MM_n$ in $\POut(F_n)$
consists of all outer automorphisms that can be expressed as a product of automorphisms
that are carried by $(Y,T)$. Section 5 of \cite{MM} implies the
following:

\begin{lem}[\cite{MM}]
    Under the action of $\POut(F_n)$, the stabilizer of a rank $k$ vertex of $MM_n$
is a free abelian group of rank $k$.
\end{lem}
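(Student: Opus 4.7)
The plan is to present the stabilizer via edge-indexed generators and star-shaped inner relations. Let $T$ be a rank $k$ vertex type, with $n$ labeled nodes, $k+1$ unlabeled nodes, and therefore $n+k$ edges, each joining a labeled node to an unlabeled one. For each edge $e = (i,v)$ of $T$ (with $i$ labeled and $v$ unlabeled) write $P_e \subseteq [n]$ for the set of labels in the component of $T \setminus \{i\}$ containing $v$; this is precisely the petal of $\ut_i$ associated to $v$. Define $\alpha_e \in \PAut(F_n)$ by $\alpha_e(y_\ell) = y_i y_\ell y_i^{-1}$ for $\ell \in P_e$, and $\alpha_e(y_\ell) = y_\ell$ otherwise. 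By the definition of carried automorphism, every automorphism carried by $(Y,\ut_i)$ is a product of the $\alpha_e$'s with $e$ incident to $i$, so the subgroup $K \leq \PAut(F_n)$ of all automorphisms carried by $(Y,T)$ is generated by the $n+k$ elements $\{\alpha_e\}_{e \in E(T)}$.

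A direct computation yields $\iota_{y_i} = \prod_{e \ni i}\alpha_e$: the edges at $i$ partition $[n]\setminus\{i\}$ into the petals of $\ut_i$, and these factors pairwise commute because they conjugate disjoint petals by the common element $y_i$ while fixing $y_i$. Hence $\Inn(F_n) \subseteq K$, and the stabilizer of $[Y,T]$ in $\POut(F_n)$ equals $K/\Inn(F_n)$. This produces $n$ relations in the quotient, namely $\prod_{e \ni i}[\alpha_e] = 1$ for each labeled $i$. Since every edge of $T$ has a unique labeled endpoint, the supports of these star relations in $\mathbb{Z}^{E(T)} = \mathbb{Z}^{n+k}$ are pairwise disjoint, so the relations are linearly independent and span a rank-$n$ direct summand. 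Once it is known that $K/\Inn(F_n)$ is abelian and admits no further relations, a rank count delivers $\mathbb{Z}^{(n+k)-n} = \mathbb{Z}^k$, as claimed.

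The main obstacle is exactly that last hypothesis. Although the $\alpha_e$'s generally fail to commute in $\PAut(F_n)$, their commutators become inner in $\POut(F_n)$; one can verify this by a case analysis on how two edges $e,e'$ sit in $T$ (sharing a labeled endpoint, sharing an unlabeled endpoint, or disjoint), applying the standard commutation criteria for partial conjugations together with the identity $[\alpha,\iota_w] = \iota_{\alpha(w)w^{-1}}$ to reduce each commutator to an inner automorphism. The absence of relations beyond the star relations can be certified either by mapping to an abelian invariant such as the first Johnson homomorphism $\tau\colon \POut(F_n) \to \mathrm{Hom}(H,\Lambda^2 H)$ with $H = F_n^{\mathrm{ab}}$, under which the classes $[\alpha_e]$ keep distinct images modulo $\tau(\Inn(F_n))$, or more efficiently by invoking the structural analysis of Section 5 of \cite{MM}, which assembles precisely this case analysis and identifies the stabilizer as the free abelian group of rank $k$.
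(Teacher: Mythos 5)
Your presentation-theoretic route is sound in outline but genuinely different from what the paper does in its generalization (Lemma~\ref{lem:freeabelian} and Lemma~\ref{lem:rankstab}); there, the authors avoid ever having to certify completeness of relations. They instead observe that the stabilizer is finitely generated, abelian (by the commutation Lemma~\ref{lem:compatible1} for partial conjugations carried by compatible based partitions), and torsion-free as a subgroup of the torsion-free group $\POut$, which already forces it to be free abelian; the rank is then read off from the petal count $\sum_i\bigl(l(\ut_i)-1\bigr)=(n+k)-n=k$.

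You correctly identify the hard spot in your own approach, namely that $K/\Inn(F_n)$ is abelian with no relations beyond the $n$ star relations, and you reasonably defer that to \cite{MM}. But notice that the torsion-freeness of $\POut(F_n)$ (recalled in the introduction: $\POut$ lies in the Torelli group, hence is residually torsion-free nilpotent) would let you sidestep the completeness-of-relations question entirely: once your commutator case analysis gives abelianity, free-abelianness is automatic, and the $n$ independent star relations give the upper bound $\leq k$ for the rank; what remains is a lower bound, i.e.\ an explicit $\mathbb{Z}^k$ inside the stabilizer, which is precisely what the per-based-partition count of carried partial conjugations in Lemma~\ref{lem:rankstab} supplies. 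As written, then, your proof has a real gap at ``no further relations,'' which you acknowledge, and the cleanest way to close it is the paper's torsion-freeness argument rather than the Johnson homomorphism route.
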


\subsection{Crossings and the order relation in \texorpdfstring{$\MM_n$}{} revisited}

In order to generalize the construction of $\MM_n$ to groups of pure symmetric automorphisms of arbitrary RAAGs, we will need to see vertex types in terms of the associated family of based partitions. Therefore, it will be convenient to understand better when a family of based partitions comes from a vertex type, and also how to redefine  the order relation in terms of based partitions.
To do that, here we collect some definitions and facts from \cite{MM}.

\begin{defi}[Crossings and disjoint based partitions]\label{def:cross}
    Let $\ua_i,\ub_j$ be two based partitions with operative factors $i$ and $j$ respectively. Two petals $P\in \ua_i$ and $Q\in \ub_j$ {\sl cross} if $i\neq j$, $P\cap Q\neq \emptyset$, $i\notin Q$ and $j\notin P$. The number of crosses between two based partitions is the number of such pairs $P$ and $Q$ and is denoted $\cro(\ua_i,\ub_j)$. We say that two partitions are disjoint if $\cro(\ua_i,\ub_j)=0$.
\end{defi}

\begin{figure} 
\begin{tikzpicture}
   \centering
        \draw (10,0) node[minimum size=1cm,circle,draw] (O) {3};
       \draw (O.80) to[out=50,in=-10,looseness=10]  node[pos=0.1](C1){} node[pos=0.4](C2){} node[pos=0.8](C3){} node[pos=0.9](C4){}  (O.-40);
\node[fit=(C1)(C2)(C3)(C4)]{$1$\quad \quad$4$};      
 \draw (O.120) to[out=135,in=210,looseness=8]  node[pos=0.1](D1){} node[pos=0.4](D2){} node[pos=0.8](D3){} node[pos=0.9](D4){}  (O.-110);
\node[fit=(D1)(D2)(D3)(D4)]{$8$};      
\draw (10,-1) node {$\ua_3$};
 \draw (13.8,0) node[minimum size=1cm,circle,draw] (P) {5};
       \draw (P.80) to[out=50,in=-10,looseness=10]  node[pos=0.1](E1){} node[pos=0.4](E2){} node[pos=0.8](E3){} node[pos=0.9](E4){}  (P.-40);
\node[fit=(E1)(E2)(E3)(E4)]{$9$};      
 \draw (P.120) to[out=135,in=195,looseness=12]  node[pos=0.0](F1){} node[pos=0.1](F2){} node[pos=0.85](F3){} node[pos=0.9](F4){}  (P.-150);
\node[fit=(F1)(F2)(F3)(F4)]{$7$};   
\draw (13.8,-1) node {$\ub_5$};

\draw (11.9,2) node {2 \quad \quad 6};
    \end{tikzpicture}
     \caption{Two based partitions with a crossing}\label{fig:Cross}
    \end{figure}

 To easily see the possible crossings between two based partitions, it is useful to use flower diagrams, but omitting in each flower the petal that contains the operative factor of the other (which we call the \textit{infinity component}) as in Figure \ref{fig:Cross} for the based partitions are $\ua_3=\{\{3\},\{8\},\{1,4\},\{2,5,6,7,9\}\}$ and $\ub_5=\{\{5\},\{4,7\},\{9\},\{1,2,3,6,8\}\}$.

For the next two Lemmas, we refer to \cite{McCM}

\begin{lem}\label{lem:disjoint}
    Two based partitions $\ua_i$ and $\ub_j$ are disjoint if and only if 
    there exist petals $P\in \ua_i$ and $Q\in \ub_j$ such that $P\cup Q=[n]$. Trivial partitions (those with only one petal) are always disjoint from any other based partition with different operative factor.
\end{lem}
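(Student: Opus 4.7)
The plan is to prove the biconditional by handling each direction separately, then deduce the statement about trivial partitions as a corollary. Throughout, the key observation is simple but crucial: since $\{i\}$ is singled out as the operative factor of $\ua_i$, the label $i$ is never contained in a petal of $\ua_i$, and symmetrically for $j$ and $\ub_j$. Consequently, when $i\neq j$, the element $j$ sits in a unique petal $P\in\ua_i$ and $i$ sits in a unique petal $Q\in\ub_j$; these are the right canonical choices in both directions.

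For the direction $(\Leftarrow)$, suppose $P\in\ua_i$ and $Q\in\ub_j$ satisfy $P\cup Q=[n]$, which automatically gives $j\in P$ and $i\in Q$. For any other petals $P'\neq P$ of $\ua_i$ and $Q'\neq Q$ of $\ub_j$, disjointness of blocks within a single partition forces $P'\subseteq[n]\setminus P\subseteq Q$ and $Q'\subseteq[n]\setminus Q\subseteq P$, so in particular $P'\cap Q'=\emptyset$. A short case analysis of the four pairs $(P,Q)$, $(P,Q')$, $(P',Q)$, $(P',Q')$ then shows that in each one at least one of the three crossing conditions fails: the first and third are killed by $i\in Q$, the second by $j\in P$, and the fourth by empty intersection. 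Hence $\cro(\ua_i,\ub_j)=0$.

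For the converse $(\Rightarrow)$ with $i\neq j$, I would take $P$ and $Q$ to be the canonical petals containing $j$ and $i$ respectively, and prove $P\cup Q=[n]$ by contradiction. If some $x\in[n]$ lay outside $P\cup Q$, then $x\neq i,j$ and $x$ would belong to petals $P'\in\ua_i$ with $P'\neq P$ and $Q'\in\ub_j$ with $Q'\neq Q$. Then $j\in P$ together with $P'\cap P=\emptyset$ yields $j\notin P'$, and symmetrically $i\notin Q'$; combined with $x\in P'\cap Q'$, this produces a crossing between $P'$ and $Q'$, contradicting disjointness. The statement about trivial partitions is then immediate: if $\ua_i$ is trivial its only petal is $[n]\setminus\{i\}$, and for any $\ub_j$ with $j\neq i$ the petal $Q\in\ub_j$ containing $i$ already satisfies $([n]\setminus\{i\})\cup Q=[n]$, so the biconditional yields disjointness.

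The argument is essentially combinatorial bookkeeping and I do not foresee any serious obstacle. The only subtlety is choosing the candidate petals $P$ and $Q$ correctly; once one takes $P$ to be the petal of $\ua_i$ containing the \emph{other} operative factor $j$ (and symmetrically for $Q$), both directions follow cleanly from the single partition-level fact that distinct parts are disjoint, together with the defining property that the operative factor is excluded from every petal.
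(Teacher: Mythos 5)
The paper gives no proof of this lemma (it defers to the McCammond--Meier reference), so there is no in-paper argument to compare against; your proof is correct and complete. Choosing $P$ and $Q$ to be the petals of $\ua_i$ and $\ub_j$ containing the \emph{other} operative factor is exactly the right move, and both the four-case verification in the backward direction and the short contradiction in the forward direction go through as written (with the implicit but correct understanding that the biconditional concerns the case $i\neq j$, since no two petals can cover $[n]$ when $i=j$).
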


\begin{lem} The based partitions $\ut_i$, $i\in\{1,\ldots,n\}$ associated to a given vertex type $T$ are pairwise disjoint. Conversely, given a family $\{\ut_1,\ldots,\ut_n\}$ of $n$ pairwise disjoint based partitions with different operative factors, they can be realized in a vertex type $T$.
\end{lem}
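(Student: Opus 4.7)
The plan is to treat the two directions separately, with the converse proved by induction on $n$. For the forward direction, fix an $[n]$-vertex type $T$ and two distinct labels $i,j$, and consider the unique reduced $T$-path $i=i_0,u_1,i_1,\ldots,u_k,i_k=j$, in which labeled and unlabeled nodes alternate. Let $P\in\ut_i$ be the petal coming from the component of $T\setminus\{i\}$ that contains $u_1$, and $Q\in\ut_j$ the petal coming from the component of $T\setminus\{j\}$ containing $u_k$; so $j\in P$ and $i\in Q$. Any other label $v$ satisfies exactly one of the following: its $v\to i$ path enters $i$ through $u_1$, and then $v\in P$; or it enters $i$ through a different neighbor, in which case the $v\to j$ path is forced through $i$ and therefore through $u_k$, placing $v\in Q$. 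Hence $P\cup Q=[n]$, and Lemma \ref{lem:disjoint} yields $\cro(\ut_i,\ut_j)=0$.

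For the converse, the cases $n\le 2$ are trivial. For $n\ge 3$, write $\ut_1=\{\{1\},P_1,\ldots,P_r\}$. The key observation, obtained by unpacking $\cro(\ut_1,\ut_j)=0$ for each $j$, is that whenever $j\in P_s$, every other petal $P_t$ (with $t\neq s$) lies inside the unique petal $Q_1^j$ of $\ut_j$ containing $1$ (indeed, $P_t$ contains neither $j$ nor elements of petals of $\ut_j$ distinct from $Q_1^j$, since otherwise a crossing would arise). This lets me ``slice'' the data by each $P_s$: introduce a fresh symbol $\ast_s$ and, for each $j\in P_s$, define a based partition $\ut_j^{(s)}$ on $P_s\cup\{\ast_s\}$ by intersecting every petal of $\ut_j$ with $P_s$ and attaching $\ast_s$ to the petal that previously contained $1$; together with the trivial based partition with operative factor $\ast_s$, this yields a pairwise disjoint family on a strictly smaller label set. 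By the inductive hypothesis it is realized by a vertex type $T_s$ on $P_s\cup\{\ast_s\}$. I then assemble $T$ by removing $\ast_s$ (a leaf) from each $T_s$ and identifying the $r$ resulting pendant unlabeled vertices with the $r$ neighbors of a new labeled node $1$.

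The most delicate step will be checking that $T$ is indeed an $[n]$-vertex type whose associated based partitions recover the originals. The axioms that edges alternate labeled/unlabeled and that there are no valence-one unlabeled nodes are immediate from the inductive guarantee for each $T_s$, together with the fact that $1$ is attached to exactly $r\geq 1$ unlabeled neighbors. For the recovery of the $\ut_i$, the forward direction applied within each $T_s$ shows that the petals of $\ut_i^{(s)}$ (for $i\in P_s$) come from components of $T_s\setminus\{i\}$; undoing the slicing (so that $\ast_s$ gets re-expanded to $\{1\}\cup\bigcup_{t\neq s}P_t$) then recovers $\ut_i$ as the based partition associated to $T\setminus\{i\}$. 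I expect this bookkeeping, and in particular the verification that the re-expansion is consistent across the different choices of $i\in P_s$, to be the main source of case analysis in the argument.
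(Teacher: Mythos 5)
The paper does not include its own proof of this lemma (it refers the reader to an external source), so I will evaluate your argument on its own terms.

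Your forward direction is correct and clean: using the unique reduced $i$-to-$j$ path in $T$ to isolate the petal $P$ of $\ut_i$ containing $j$ and the petal $Q$ of $\ut_j$ containing $i$, then checking $P\cup Q=[n]$ by splitting on whether the $v\to i$ path enters through $u_1$ or not, gives exactly the hypothesis of Lemma \ref{lem:disjoint}.

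The converse has a correct core idea, but as written the induction does not terminate in one case. You slice along the petals $P_1,\ldots,P_r$ of $\ut_1$ and apply the inductive hypothesis on each label set $P_s\cup\{\ast_s\}$. This requires $|P_s\cup\{\ast_s\}|<n$, i.e.\ $|P_s|<n-1$, which holds precisely when $r\geq 2$. If $\ut_1$ is the trivial based partition ($r=1$), then $P_1=[n]\setminus\{1\}$ and the sliced problem on $P_1\cup\{\ast_1\}$ has the same size $n$ and is in fact isomorphic to the original (the effect of the slicing is merely to relabel $1$ as $\ast_1$), so the recursion loops. The fix is straightforward: dispatch the case where every $\ut_i$ is trivial by taking $T=\OO$; otherwise relabel so that $\ut_1$ is nontrivial, whence $r\geq 2$ and each $P_s$ is a proper subset of $[n]\setminus\{1\}$. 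A second point you assert but do not argue is that the sliced family on $P_s\cup\{\ast_s\}$ is again pairwise disjoint. This does hold, but it needs a short check: for $j,k\in P_s$ take witnessing petals $R\in\ut_j$, $S\in\ut_k$ with $R\cup S=[n]$; since $1\in R\cup S$ we have $R=Q_1^j$ or $S=Q_1^k$, so $\ast_s$ is attached to at least one of the corresponding sliced petals, and these sliced petals (which are nonempty since $k\in R\cap P_s$ and $j\in S\cap P_s$) have union $P_s\cup\{\ast_s\}$. With these two additions your argument is complete and, in my reading, correct; the final reassembly and recovery of the $\ut_i$ that you flag as the delicate step does go through as you outline.
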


From now on, we will often denote vertex types with a double underline, for example $\uua$. The reason for doing that is mnemonic, as we have the following layers: vertex types as $\uua$ are sets of based partitions as $\ua$ and based partitions are sets of petals.

\begin{defi}[Order of based partitions] Let $\ua_i$ and $\ub_i$ be based partitions of $[n]$ both with operative factor $i$.
    We say that $\ua_i\leq \ub_i$ if each petal of $\ub_i$ is contained in a petal of $\ua_i$, that is, if each petal of $\ua_i$ is a union of petals of $\ub_i$. In that case note that $l(\ua_i)\leq l(\ub_i)$.
\end{defi}

The fact that this definition is compatible with the order of vertex types can be found in \cite{MM}: 

\begin{lem}
    Let $\uua,\underline{\underline{\hat A}}\in \Wh_n$. Then $\underline{\underline{\hat A}}\leq \uua$ if and only if $\underline{\hat A_i}\leq \ua_i$ for each pair of based partitions $\underline{\hat A_i}$ of $\underline{\underline{\hat A}}$, $\ua_i$ of $\uua$.
\end{lem}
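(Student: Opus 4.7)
My plan is to prove the two implications separately, both by an induction whose size parameter is the rank difference $r(\uua)-r(\underline{\underline{\hat A}})$, which counts the number of foldings separating the two vertex types.

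For the forward direction, since the order on $\Wh_n$ is generated by single foldings and the petal-inclusion order is transitive, I would reduce to the case where $\underline{\underline{\hat A}}$ is obtained from $\uua$ by a single fold. Let the folded edges $e_1, e_2$ share a labeled endpoint $j$ and have unlabeled endpoints $u_1, u_2$ that get identified to a single vertex $u$ in $\underline{\underline{\hat A}}$. Then I would analyze the based partition at each label $i$: if $i = j$, the two petals of $\ua_j$ corresponding to the components of $\uua-j$ containing $u_1$ and $u_2$ are merged into a single petal in $\underline{\hat A_j}$, so $\underline{\hat A_j} \leq \ua_j$; if $i \neq j$, the path $u_1\text{--}j\text{--}u_2$ survives in $\uua-i$, so $u_1, u_2$ already lie in the same component there and the identification leaves the partition unchanged, giving $\underline{\hat A_i} = \ua_i$.

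For the converse I would induct on $r(\uua)-r(\underline{\underline{\hat A}})$. The base case follows from the identity
\[
\sum_i l(\ut_i) = n + r(T),
\]
which holds because every edge of a vertex type has exactly one labeled endpoint, so $\sum_i \deg_T(i)$ equals the total number of edges $n+r(T)$. When the ranks agree, summing the inequalities $l(\underline{\hat A_i}) \leq l(\ua_i)$ forces equality at every $i$; combined with $\underline{\hat A_i} \leq \ua_i$ this yields $\underline{\hat A_i} = \ua_i$, and since a vertex type is determined by its family of based partitions, $\underline{\underline{\hat A}} = \uua$. For the inductive step, if $r(\uua) > r(\underline{\underline{\hat A}})$, then some $j$ satisfies $l(\underline{\hat A_j}) < l(\ua_j)$, so some petal of $\underline{\hat A_j}$ is the union of at least two distinct petals $P_1, P_2$ of $\ua_j$. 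I fold the two edges at $j$ corresponding to $P_1, P_2$ to obtain $\uua' < \uua$. By the analysis of the forward direction, $\ua'_j$ is $\ua_j$ with $P_1, P_2$ merged (which still lies in a single petal of $\underline{\hat A_j}$ by the choice of $P_1, P_2$), while $\ua'_i = \ua_i$ for $i \neq j$. Thus $\underline{\hat A_i} \leq \ua'_i$ for all $i$, and the inductive hypothesis applied to $(\underline{\underline{\hat A}}, \uua')$ gives $\underline{\underline{\hat A}} \leq \uua'$, so transitivity finishes.

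The main obstacle is the converse inductive step: one must argue that a folding can always be chosen so as to simultaneously reduce $\ua_j$ at the label where the strict inequality occurs \emph{and} preserve the dominance of $\underline{\hat A_i}$ at every other label. What makes this work is the observation that a fold at $j$ does not affect the based partitions at labels $i \neq j$, a rigidity that reflects the tree structure (specifically, the survival of the short path through $j$).
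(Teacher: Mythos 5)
Your proof is correct. Both implications are handled cleanly: the forward direction correctly analyzes a single fold and shows it merges exactly two petals at the folded label $j$ while leaving every other based partition unchanged (because the short path $u_1\text{--}j\text{--}u_2$ persists in $T-i$ for $i\neq j$, and any path in $\hat T-i$ through $u$ lifts to a path in $T-i$ through $u_1$, $j$, $u_2$); the converse is a valid descent using the edge-count identity $\sum_i l(\ut_i)=n+r(T)$, the pigeonhole choice of $P_1,P_2$ in a common petal of $\underline{\hat A_j}$, and the (cited) fact that a vertex type is recovered from its family of based partitions for the base case.

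It is worth noting that the paper does not prove this lemma itself but simply attributes it to McCullough--Miller, so there is no proof in the paper to compare against directly. Your argument is a self-contained, elementary proof by induction on the rank difference, organized around the local analysis of a single fold. This is in the spirit of McCullough--Miller's treatment (which is also combinatorial and fold-by-fold) but does not presuppose any of their machinery beyond the two facts the paper already quotes: that the order on $\Wh_n$ is generated by foldings, and that a vertex type is determined by its based partitions. The identity $\sum_i l(\ut_i)=n+r(T)$, justified by the observation that every edge has exactly one labeled endpoint, is a tidy way to force the base case; it also gives a clean explanation of the paper's earlier remark that maximal chains in $\Wh_n$ have length $n-1$. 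One small point you could make explicit: after folding two edges at $j$, the merged unlabeled vertex has degree at least $3$ and $j$ retains positive degree, so the result is indeed a legitimate $[n]$-vertex type.
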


\section{Pure symmetric automorphisms of RAAGs}\label{sec:puresym}

    For the rest of the paper we fix a finite simplicial graph $\Gamma$ with set of vertices $V_\G:=V(\Gamma)$ and set of edges $E(\G)$. We view  $V_\G$ as the standard generating set for the right angled Artin group $A_\G$ defined in the introduction.
    Let $v\in V_\G$, $\lk_\G(v)$ denotes the {\sl link} of $v$, which is the subgraph induced by the vertices of $\G$ adjacent to $v$ and $\st_\G(v)$ denotes the {\sl star}, which is the subgraph induced by $\lk_\G(v)\cup\{v\}$. Here, the subgraph of $\Gamma$ {\sl induced} by a set of vertices $\Omega\subseteq V_\G$ is the maximal subgraph of $\G$ with  vertex set $\Omega$.

\begin{defi}[Graphically reduced words]
If $w$ is a word in $L:=V_\G^\pm$, we say that $w$ is {\sl graphically reduced} if there is no subword of the form $(v\nu v^{-1})^\pm$ where $\nu$ is a word in $\st_\G(v)^\pm$. Graphically reduced words are shortest length representatives (\cite[Proposition 1.2]{Gen})  and in particular, if a non empty word is graphically reduced, then it cannot represent the identity.
\end{defi}

\subsection{Pure symmetric automorphisms of RAAGs} As in the free group case, we now define  the notion of pure symmetric automorphisms for RAAGs.

\begin{defi} Let $A_\Gamma$ be a RAAG. The group of pure symmetric automorphisms $\PAut(A_\G)$ of $A_\G$ is the group consisting of those automorphisms of $A_\G$ that map each standard generator of $A_\G$ (i.e., each vertex of $\G$) to a conjugate of itself. Obviously, it contains the inner automorphism and the corresponding outer version is denoted $$\POut(A_\G):=\PAut(A_\G)/\Inn(A_\G).$$
\end{defi}

Next, we introduce an important family of elements of $\PAut(A_\G)$.

\begin{defi}[Partial conjugations]
Given $a \in V_\G$ and $A$ a union of connected components of $\Gamma-\st_\G(a)$, the partial conjugation of $A$ by $a$ is the automorphism $C_{A}^a$  given by
$$C_A^a(b)=
\left\{\begin{array}{l}
aba^{-1}, \text{ if } b \in A \\
b, \text{ if } b \notin A
\end{array}\right.
$$   
where $b\in\G$. We will also use this notation for partial conjugations where we conjugate by some $a\in V_\G^{-1}$, in fact $C^{a^{-1}}_A=(C^a_A)^{-1}$. We say that the set $A$ is the {\sl support} of $C^a_A$ and we extend this in the obvious way to the case when $\alpha$ is a product of (powers of) partial conjugations by the same $a$ based at different connected components of $\G-\st_\G(a)$. We use $\supp(\alpha)$ for the support of $\alpha$.
\end{defi}

\begin{teo}[Laurence \cite{Lau}] The set of partial conjugations is a generating system for $\PAut(A_\Gamma)$.   
\end{teo}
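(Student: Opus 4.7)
The plan is to imitate Whitehead's classical peak-reduction argument for $\Aut(F_n)$, adapted to the RAAG setting. Given $\alpha\in\PAut(\A)$, I would assign a non-negative integer complexity and show that, unless $\alpha$ is inner, some partial conjugation decreases this complexity. Iterating yields a factorization into partial conjugations (up to an inner automorphism, which itself is a product of partial conjugations once one notes that conjugation by $v\in V_\G$ equals $C^v_A$ with $A$ the union of all components of $\G-\st_\G(v)$).

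Concretely, for each $v\in V_\G$ pick the unique graphically reduced representative of $\alpha(v)$; since $\alpha$ is pure symmetric this has the form $g_v v g_v^{-1}$ with $g_v$ itself graphically reduced and no letter of $g_v$ commuting past $v$ at the inner end. Define $L(\alpha):=\sum_{v\in V_\G}|\alpha(v)|$, where $|\cdot|$ is the graphically reduced length. Then $L(\alpha)\geq |V_\G|$, and equality forces every $g_v$ to be trivial, hence $\alpha=\mathrm{id}$. The whole proof reduces to showing: if $L(\alpha)>|V_\G|$, there exists a partial conjugation $\tau$ with $L(\tau\alpha)<L(\alpha)$.

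To produce $\tau$, choose $v$ with $g_v\neq 1$ and let $a^{\varepsilon}$ (for $a\in V_\G$, $\varepsilon=\pm 1$) be the first letter of $g_v$. Let $A$ be the union of those connected components of $\G-\st_\G(a)$ that contain $v$; equivalently, $A$ collects all generators $w$ such that the conjugating word $g_w$ for $\alpha(w)$ begins with the same letter $a^{\varepsilon}$ after graphical reduction. Set $\tau:=C^{a^{-\varepsilon}}_A$. One then checks, letter by letter, that $(\tau\alpha)(w)$ equals $\alpha(w)$ when $w\notin A$, and for $w\in A$ equals $a^{-\varepsilon}(g_w v g_w^{-1})a^{\varepsilon}$, whose graphical reduction cancels the leading $a^{\varepsilon}$ of $g_w$ and the trailing $a^{-\varepsilon}$ of $g_w^{-1}$ without introducing new non-cancellations anywhere else; the total length drops by at least two.

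The main obstacle is verifying this last cancellation claim, which is the technical core of Laurence's theorem. It requires the correct choice of $A$, and a careful analysis using graphical reduction: one must argue that (i) $A$ really is a union of full components of $\G-\st_\G(a)$, so that $\tau$ is a legitimate partial conjugation (this uses that a letter which is forced to appear at the front of the reduced form of $g_w$ can only be pushed past generators commuting with it, hence lies in a canonical component), and (ii) conjugating by $a^{-\varepsilon}$ inside $A$ does not lengthen any $\alpha(w)$ for $w\notin A$, which follows because in that case the leading letter of $g_w$ is either absent or different from $a^{\varepsilon}$, and the generators $w\in \st_\G(a)$ are unaffected by $\tau$. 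Once this reduction step is established, induction on $L(\alpha)$ finishes the proof.
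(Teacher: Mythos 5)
The paper does not prove this theorem; it is stated as a cited result from Laurence's paper \cite{Lau}, so there is no ``paper's proof'' to compare against. I am therefore evaluating your sketch on its own merits.

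Your overall framework --- an additive length complexity $L(\alpha)=\sum_v|\alpha(v)|$ and a length-reducing partial conjugation when $L(\alpha)>|V_\G|$ --- is the right shape, and your base case and the remark that inner automorphisms are products of partial conjugations are both fine. However, the inductive step has a genuine gap, not merely a deferred verification. You write $(\tau\alpha)(w)=\alpha(w)$ for $w\notin A$ and $(\tau\alpha)(w)=a^{-\varepsilon}\alpha(w)a^{\varepsilon}$ for $w\in A$, but since $\tau$ is applied to the whole element $\alpha(w)=g_w w g_w^{-1}$, we actually get $\tau\alpha(w)=\tau(g_w)\,\tau(w)\,\tau(g_w)^{-1}$, and $\tau(g_w)$ is in general neither $g_w$ nor $a^{-\varepsilon}g_w a^{\varepsilon}$: if $g_w=b_1\cdots b_k$ is graphically reduced, each $b_i$ lying in $A$ gets wrapped with $a^{\mp\varepsilon}$ while those outside $A\cup\st_\G(a)$ do not, and this can strictly increase $|\tau\alpha(w)|$ even when $w\notin A$ (e.g.\ $g_w=bc$ with $b\notin A$, $c\in A$ becomes $b\,a^{-\varepsilon}c\,a^{\varepsilon}$). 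So the assertion in your item (ii) that $\tau$ does not lengthen $\alpha(w)$ for $w\notin A$ is exactly what needs to be proved, not something that ``follows because the leading letter of $g_w$ differs from $a^{\varepsilon}$.'' To make your formula work you would need to establish that every letter of every $g_w$ lies in $A\cup\st_\G(a)$ for your chosen $A$, which is a substantial structural fact about pure symmetric automorphisms and is not addressed. Separately, the asserted equivalence between your two definitions of $A$ (the component of $v$ in $\G-\st_\G(a)$ versus the set of $w$ whose $g_w$ begins with $a^{\varepsilon}$) is not justified; that coincidence is essentially the heart of Laurence's argument and cannot be taken as given. In short, you have correctly identified the Whitehead/peak-reduction strategy, but the key cancellation lemma and the well-definedness of $\tau$ as a partial conjugation are the theorem, and your sketch does not supply them.
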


Observe that if $A_1$ and $A_2$ are unions of connected components of $\Gamma-\st_\G(a)$ and $A_1\cap A_2=\emptyset$, then $C_A^a=C_{A_1}^aC_{A_2}^a$ so in fact to generate $\PAut(A_\Gamma)$ we only need to consider partial conjugations $C_A^a$ where $A$ is a single connected component of $\Gamma-\st_\G(a)$. 

 Toinet \cite{Toi}  computed an explicit presentation of $\PAut(A_\G)$ in terms of partial conjugations. This presentation was later refined by Koban-Piggot \cite{KoPi}; one of the ingredients is the following identity for $C^a_A$ and $C^b_B$ partial conjugations in $\PAut(A_{\Gamma})$ (here $a,b\in V_\G$, $A$ is a union of connected components of $\G-\st_\G(a)$ and $B$ of $\G-\st_\G(b)$):
  $$\left[C_{A}^a, C_{B}^b\right]=1\text{ if either }(A \cup\{a\}) \cap(B \cup\{b\})=\varnothing\text{ or }(A \cup\{a\}) \subseteq B.$$
     It will be helpful below to express these identities and the associated presentation using the following notation
from Day-Wade \cite{DayWade}.

\begin{defi}[Graph components: shared, dominant and subordinate]
     Let $a, b\in\Gamma$ be not linked.  A connected component of $\Gamma-\st_\G(a)$ that is also a connected component of $\Gamma-\st_\G(b)$ is said to be {\sl shared}. The unique component of $\Gamma-\st_\G(a)$ that contains $b$ is called the {\sl dominant} component of $\G-\st_\G(a)$ respect to $b$. Finally, a {\sl subordinate} component is any connected component of $\Gamma-\st_\G(a)$ that is contained in the dominant component of $\G-\st_\G(b)$ respect to $a$. By Lemma 2.1 in \cite{DayWade}, any connected component of $\Gamma-\st_\G(a)$ is of one of these types. Observe that if $A$ is a connected component of $\Gamma-\st_\G(a)$ which is either shared or subordinate, then $A\cap\st_\G(b)=\emptyset$.
     \label{defi:comp}
\end{defi}

An example is shown in Figure \ref{fig:Comp}. In this case $\G-\st_\G(b)$ has  dominant component $D_a$, one shared component $C$ and one subordinate component $S$, and $\G-\st_\G(a)$ has  dominant component $D_b$, one shared component $C$ and one subordinate component $S'$.
\begin{figure}
    \centering
    \begin{tikzpicture}
        \node[shape=circle,draw=black, fill=black] (A) at (-10,0) {};
    \node[shape=circle,draw=black, fill=black] (B) at (-9,0) {};
    \node[shape=circle,draw=black, fill=black, label={above: $b$}] (C) at (-8,0) {};
    \node[shape=circle,draw=black, fill=black] (D) at (-7,0) {};
    \node[shape=circle,draw=black, fill=black] (E) at (-6,0) {};
    \node[shape=circle,draw=black, fill=black] (F) at (-5.5,-1) {} ;
    \node[shape=circle,draw=black, fill=black] (G) at (-5,-2) {} ;
    \node[shape=circle,draw=black, fill=black] (H) at (-7.5,-1) {} ;
    \node[shape=circle,draw=black, fill=black] (I) at (-7.5,-2) {} ;

    \draw [red, thick=1cm] plot [smooth cycle, tension=0.9] coordinates {(-10.5,0) (-10,0.5) (-9.5,0) (-10,-0.5)};
    \node at (-10.8,0) {$S$};

    \draw [black, thick=1cm] plot [smooth cycle, tension=0.9] coordinates {(-9.4,0) (-8,0.75) (-6.6,0) (-7.5,-1.35)};
    \node at (-8,1) { $\st_\G(b)$};

     \draw [red, thick=1cm] plot [smooth cycle, tension=0.9] coordinates {(-6.35,0.3) (-5.75,0.3)(-5,-1)  (-4.8,-2.5) (-6,-1) };
    \node at (-4.7,-1) {$D_a$};

    \draw [red, thick=1cm] plot [smooth cycle, tension=0.9] coordinates {(-8,-2) (-7.5,-1.5) (-7,-2) (-7.5,-2.5)};
    \node at (-8.25,-2) { $C$};

    \path (A) edge (B);
    \path (C) edge (B);
    \path (C) edge (D);
    \path (C) edge (H);
    \path (E) edge (H);
    \path (H) edge (I);
    \path (E) edge (D);
    \path (E) edge (F);
    \path (G) edge (F);

     \node[shape=circle,draw=black, fill=black] (A') at (-2,0) {};
    \node[shape=circle,draw=black, fill=black] (B') at (-1,0) {};
    \node[shape=circle,draw=black, fill=black] (C') at (0,0) {};
    \node[shape=circle,draw=black, fill=black] (D') at (1,0) {};
    \node[shape=circle,draw=black, fill=black, label={right: $a$}] (E') at (2,0) {};
    \node[shape=circle,draw=black, fill=black] (F') at (2.5,-1) {} ;
    \node[shape=circle,draw=black, fill=black] (G') at (3,-2) {} ;
    \node[shape=circle,draw=black, fill=black] (H') at (0.5,-1) {} ;
    \node[shape=circle,draw=black, fill=black] (I') at (0.5,-2) {} ;

    \path (A') edge (B');
    \path (C') edge (B');
    \path (C') edge (D');
    \path (C') edge (H');
    \path (E') edge (H');
    \path (H') edge (I');
    \path (E') edge (D');
    \path (E') edge (F');
    \path (G') edge (F');

     \draw [blue, thick=1cm] plot [smooth cycle, tension=0.7] coordinates {(-2.4,0) (-2,0.5) (-1,0.5) (0,0.5) (0.4,0) (0,-0.5) (-1,-0.5) (-2,-0.5)  };
    \node at (-1,0.75) { $D_b$};

    \draw [black, thick=1cm] plot [smooth cycle, tension=0.9] coordinates {(0.68,0.3) (2,0.57) (2.8,0) (2.5,-1.55) (0.35,-1.35)  (0.55, -0.5)};
    \node at (2,0.85) { $\st_\G(a)$};

     \draw [blue, thick=1cm] plot [smooth cycle, tension=0.9] coordinates {(3,-1.5) (3.5,-2)(3,-2.5)  (2.5,-2) };
    \node at (3.75,-2) { $S'$};

    \draw [blue, thick=1cm] plot [smooth cycle, tension=0.9] coordinates {(0.1,-2) (0.5,-1.6) (0.9,-2) (0.5,-2.4)};
    \node at (-0.25,-2) {$C$};

    \end{tikzpicture}
  \caption{Graph components}
    \label{fig:Comp}
\end{figure}

Using the terminology of Definition \ref{defi:comp}, one can rewrite the presentation of \cite{KoPi} as
\begin{teo}\label{teo:conmu}
    The group $\PAut(A_{\Gamma})$ is the group generated by all partial conjugations $C_{A}^a$ for $a\in V_\G$ and $A$ a connected component of $V_\G-\st_\G(a)$, subject to the following relations:
 \begin{itemize}
     \item[i)] $\left[C_{A}^a, C_{B}^b\right]=1$ if either $a=b$ or $a \in \lk_\G(b)$,
     \item[ii)]$\left[C_{A}^a, C_{B}^b\right]=1$ if $a \notin \st_\G(b)$ and either $A$ and $B$ are shared but distinct or $A$ or $B$ are subordinate,
     \item[iii)] $\left[C_{A}^a C_{B}^a, C_{A}^b\right]=1$ if $a \notin \st_\G(b)$, $A$ is shared and $B$ dominant.
 \end{itemize}
\end{teo}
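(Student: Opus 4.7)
The plan is to derive this presentation from the Koban--Piggott presentation by translating its set-theoretic conditions into the Day--Wade trichotomy of Definition \ref{defi:comp}. Since both presentations share the same generating set (Laurence's partial conjugations), the work lies entirely in matching the two lists of relations, which I would accomplish in two directions: showing every relation in (i)--(iii) holds in $\PAut(A_\Gamma)$, and showing every KoPi relation is a consequence of (i)--(iii).

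For the first direction I would check each relation directly. Relation (i) with $a=b$ is immediate, since conjugation by $a$ is a global automorphism of $A_\G$ and the partial conjugations $C^a_A$ are its restrictions to disjoint unions of components of $\G-\st_\G(a)$, which commute. For $a\in\lk_\G(b)$, the hypothesis forces $b\notin A$ and $a\notin B$ (as $A\cap\st_\G(a)=\varnothing$ and $a\in\st_\G(b)$, dually for $B$), and since $ab=ba$ in $A_\G$ a direct computation on each vertex generator shows that the two partial conjugations commute. Relation (ii) reduces to the KoPi condition $(A\cup\{a\})\cap(B\cup\{b\})=\varnothing$: the hypothesis $a\notin\st_\G(b)$, combined with the shared-distinct or subordinate position of $A$ and $B$, forces $A\cap B=\varnothing$, $a\notin B$ and $b\notin A$ via the standard fact noted in Definition \ref{defi:comp} that shared and subordinate components are disjoint from the relevant stars.

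Relation (iii) requires a little more care. I would combine $C^a_AC^a_B$ into a single partial conjugation $C^a_{A\cup B}$, which is legitimate because $A$ and $B$ are disjoint connected components of $\G-\st_\G(a)$, and then apply the KoPi identity to the pair $C^b_A$ and $C^a_{A\cup B}$. Writing KoPi as $[C^u_X,C^v_Y]=1$ whenever $(X\cup\{u\})\subseteq Y$, I set $X=A$, $u=b$, $Y=A\cup B$, $v=a$: since $A\subseteq A\cup B$ and $b\in B\subseteq A\cup B$ (the latter because $B$ is the dominant component of $\G-\st_\G(a)$ with respect to $b$), we have $(A\cup\{b\})\subseteq A\cup B$, triggering the required inclusion. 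Hence $[C^b_A,C^a_{A\cup B}]=1$, which is exactly relation (iii).

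For the second direction I would invoke the full Koban--Piggott presentation (of which the excerpt only mentions the commutator identity as one ingredient) and verify case by case, via the Day--Wade trichotomy of Lemma 2.1 of \cite{DayWade}, that every KoPi relation either coincides with one of (i)--(iii) or follows from them. The main obstacle is to keep this case analysis exhaustive but non-redundant: for every pair $(a,b)$ one must separately treat $a=b$, $a\in\lk_\G(b)$, and $a\notin\st_\G(b)$, and within the last case enumerate the nine combinations of shared/dominant/subordinate positions for $A$ and $B$, checking that the combined-support trick used for (iii) captures precisely the ``overlapping'' configurations and does not produce extra relations. Once this bookkeeping is done, the equivalence of the two presentations is established.
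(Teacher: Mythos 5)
The paper gives no explicit proof of this theorem: the statement is asserted as a direct rewriting of the Koban--Piggott presentation into the Day--Wade terminology, and your plan is precisely that translation, so the approach is the intended one. Your treatments of (i) and (iii) are correct, including the observation that (iii) is obtained by coalescing $C^a_A C^a_B$ into the union-supported $C^a_{A\cup B}$ and invoking the KoPi inclusion condition with $b\in B$.

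There is, however, a genuine slip in your handling of (ii). You claim that the hypotheses of (ii) always reduce to the KoPi \emph{disjointness} condition $(A\cup\{a\})\cap(B\cup\{b\})=\varnothing$. This is false in the subcase where $A$ is subordinate and $B$ is dominant (or symmetrically $B$ subordinate, $A$ dominant): there $A$ is contained in the dominant component $D$ of $\G-\st_\G(b)$ relative to $a$, so $B=D$ gives $A\subseteq B$ and $a\in B$, and the two sets overlap rather than being disjoint. The correct reduction in that subcase is to the \emph{inclusion} condition $(A\cup\{a\})\subseteq B$. The fact recorded at the end of Definition \ref{defi:comp} only tells you that a shared or subordinate $A$ misses $\st_\G(b)$; it does not bound $A\cap B$ when $B$ is the dominant component. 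Finally, the reverse implication (that the KoPi relations, including the union-supported ones, are consequences of (i)--(iii)) is correctly identified as the substantive half of the argument, but it is only sketched; expanding $C^a_A$ for non-connected $A$ into a product of single-component conjugations and running the Day--Wade case analysis is where nearly all of the actual work of the proof sits, and as written it is not carried out.
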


\section{Construction of the \texorpdfstring{$\G$-} MMcCullough Miller space} 
\label{sec:construction}
We begin by defining the analogous to the notion of based partitions in our case. In the case where $\G$ has no edges, so $A_\G$ is free, we can identify possible partial conjugations by a vertex $a$ with petals of based partitions of the set of vertices of $\G$ that have operative factor $a$. We will do the same for arbitrary RAAGs but then we have to take into account that the structure of the graph $\G$ imposes restrictions in the sets of allowed partitions.

\begin{defi}[Valid partitions and length]
Let $a\in\G$ be a vertex. A {\sl $\G$-valid based partition with operative factor $a$} is a partition of the form $\ua_a=\{\{a\},P_1,\ldots,P_k\}$ of the set $\G-\lk_\G(a)$ so that each $P_i$ is a union of connected components. The sets $P_i$ are called {\sl petals}. We say that $k$ is the 
    {\sl length} of the based partition and denote $k=l(\ua_a)$. The {\sl trivial} based partition with operative factor $a$ is the based partition with only one petal, i.e., $\{\{a\},\G-\st_\G(a)\}$
\end{defi}

An important observation is that, when considering $\G$-valid based partitions, the set that we are partitioning depends on the operative factor. For example, for the graph $\G$ in figure \ref{fig:gamma}, the following are $\G$-valid partitions
$$\{\{3\},\{1,4\},\{9\},\{11\},\{5,6,7\}\},$$
$$\{\{9\},\{4\},\{1,2,3,5,6,7,10,11\}\}.$$

From now on, all the based partitions will be assumed to be $\Gamma$-valid, for simplicity we will often simply say based partition.

\begin{defi}[Crossings, disjoint and compatible partition]
Crossings are defined as in the case of $\MM_n$ (Definition \ref{def:cross}), i.e., two based partitions $\ua_a$ and $\ua_b$ {\sl cross} if $a\notin \st_\G(b)$ and there are petals $P$ of $\ua_a$ and $Q$ of $\ua_b$ such that $a\not\in Q$, $b\not\in P$ and $P\cap Q\neq\emptyset$. 
Observe that the $\G$-validity condition implies that the dominant component of $\st_\G(a)$ respect to $b$ can not intersect $P$.
We define the number of crossings $\cro(\ua_a,\ub_b)$ as the number of pairs $P,Q$ as before.
We say that  $\ua_a, \ub_b$ are {\sl disjoint} if $a\notin \st_\G(b)$ and $\ua_a$, $\ub_b$ do not cross and we say that $\ua_a, \ub_b$ are {\sl compatible} if they are disjoint or $a\in\st_\G(b)$, i.e., compatibility means that either $\ua_a, \ua_b$ are disjoint, or $a$ and $b$ are adjacent, or $a=b$. A trivial based partition  is always compatible with the rest of based partitions.
\end{defi}

The top of Figure \ref{fig:RefDis} represents the flower diagram of some $\G$-valid based partitions that cross where $\G$ is the graph of Figure \ref{fig:gamma}.

\begin{defi}[Vertex type]
    We define a {\sl $\G$-vertex type} or simply a {\sl vertex-type} $\uua$ as a collection of $n=|V_\G|$ pairwise compatible based partitions whose operative factors comprise all $V_\Gamma$.

    We say that two vertex types $\uua$ and $\uub$ are {\sl compatible} if their based partitions are pairwise compatible with each other. And we say that a collection of vertex types is {\sl compatible} if they are pairwise compatible. We also extend the notion of crossings in the obvious way, i.e., if $\uua$ is a vertex type and $\ub_b$ a based partition, then we set
    $$\cro(\uua,\ub_b)=\sum_{\ua_a\text{ based partition of }\uua}\cro(\ua_a,\ub_b).$$
\end{defi}

\begin{defi}[Order]\label{def:order}
    Let $a\in\Gamma$ and $\ua_a$, $\underline{\hat A}_a$ two based partitions with operative factor $a$.  We say that $\underline{\hat A}_a\leq \ua_a$ if every petal of $\ua_a$ is contained in a petal of $\underline{\hat A}_a$, i.e., if every petal of $\underline{\hat A}_a$ is a union of petals of $\ua_a$. In particular, if $\underline{\hat A}_a\leq \ua_a$, then $ l(\underline{\hat A}_a) \leq l(\ua_a) $. 

    For vertex types, we will say that $\underline{\ua}\leq \underline{\ub}$ if $\ua_a\leq \ub_a$, for any $a\in V_\Gamma$.
\end{defi}

The fact that coalescing petals can not increase the number of crossings implies the following.

\begin{lem}\label{lem:ordercomp} Let $\ua_a<\ub_a$, $\us_c$ be based partitions and $\uua<\uub$, $\underline{\us}$ vertex types. Then
\begin{itemize}
\item[i)] $\cro(\ua_a,\us_c)\leq\cro(\ub_a,\us_c)$. As a consequence, if $\ub_a$ is compatible with $\us_c$, then so is $\ua_a$. 

\item[ii)] $\cro(\uua,\us_c)\leq\cro(\uub,\us_c)$. As a consequence, if $\uub$ is compatible with a vertex type $\underline{\us}$ then so is $\uua$. In particular, $\uua$ and $\uub$ are compatible.
\end{itemize}
\end{lem}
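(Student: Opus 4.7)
The plan is to prove part (i) first by a direct combinatorial count, then deduce part (ii) by summing the inequality over all operative factors and specializing $\underline{\us}=\uub$ for the final assertion. The underlying intuition is that $\ua_a\leq\ub_a$ means that each petal of $\ua_a$ is a disjoint union of petals of $\ub_a$, i.e., $\ua_a$ is obtained from $\ub_a$ by coalescing petals, and coalescing can only fuse or destroy crossings with a fixed third based partition, never create them.

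For part (i), if $a\in\st_\G(c)$ then both $\cro(\ua_a,\us_c)$ and $\cro(\ub_a,\us_c)$ are zero by definition, so assume $a\notin\st_\G(c)$. Write each petal $P$ of $\ua_a$ as $P=P_{i_1}\sqcup\cdots\sqcup P_{i_m}$ with the $P_{i_j}$ petals of $\ub_a$. For any petal $Q$ of $\us_c$ the condition $c\notin P$ is equivalent to $c\notin P_{i_j}$ for every $j$, and $P\cap Q\neq\emptyset$ is equivalent to the existence of some $j$ with $P_{i_j}\cap Q\neq\emptyset$. Hence whenever $(P,Q)$ is a crossing of $\ua_a$ with $\us_c$, at least one pair $(P_{i_j},Q)$ is a crossing of $\ub_a$ with $\us_c$. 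Since distinct petals of $\ua_a$ induce disjoint families of petals of $\ub_a$, summing over $(P,Q)$ gives $\cro(\ua_a,\us_c)\leq\cro(\ub_a,\us_c)$. The compatibility consequence is then immediate: if $\ub_a$ is compatible with $\us_c$ then either $a\in\st_\G(c)$, giving automatic compatibility of $\ua_a$ with $\us_c$, or $\cro(\ub_a,\us_c)=0$, forcing $\cro(\ua_a,\us_c)=0$ as well.

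Part (ii) is then formal. The inequality $\cro(\uua,\us_c)\leq\cro(\uub,\us_c)$ follows by summing the inequality of (i) over all operative factors $a\in V_\G$. If $\uub$ is compatible with a vertex type $\underline{\us}$, then each based partition $\ub_a$ of $\uub$ is compatible with each based partition $\us_c$ of $\underline{\us}$, so by (i) each $\ua_a$ is compatible with each $\us_c$, meaning $\uua$ is compatible with $\underline{\us}$. For the last assertion, apply this with $\underline{\us}=\uub$: the based partitions forming $\uub$ are pairwise compatible by the definition of a vertex type, so $\uub$ is compatible with itself, and thus $\uua$ is compatible with $\uub$.

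No serious obstacle is anticipated. The only place requiring a little attention is the counting in part (i), where one must check that the case $a\in\st_\G(c)$ is handled separately, and that the inequality is obtained by a clean sum rather than an explicit injection (different petals of $\ua_a$ may give rise to several crossings with the same $Q$ on the $\ub_a$ side, but disjointness of the families $\{P_{i_j}\}$ keeps the sum well-behaved). Everything else is definitional bookkeeping.
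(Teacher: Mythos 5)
Your proof is correct and spells out precisely the intuition the paper gives as its one-line justification (``coalescing petals can not increase the number of crossings''); the key step — that if $P=\bigsqcup_j P_{i_j}$ then a crossing $(P,Q)$ of $\ua_a$ with $\us_c$ yields a crossing $(P_{i_j},Q)$ of $\ub_a$ with $\us_c$, and these are counted without double-counting since distinct petals of $\ua_a$ decompose into disjoint families of petals of $\ub_a$ — is the right bookkeeping, and the deductions in part (ii), including specializing $\underline{\us}=\uub$, are exactly what is needed.
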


\begin{defi}[$\Gamma$-Whitehead poset, nuclear vertex]
    The $\Gamma$-Whitehead poset $\Wh_{\G}$ is the poset formed by the $\G$-vertex types under the partial order previously defined. The poset $\Wh_{\G}$ has a unique minimal element called nuclear vertex $\OO$, whose based partitions for each $a\in V_\G$ are all trivial, i.e., have only one petal.
\end{defi}

\begin{figure}
   \begin{tikzpicture}
       \centering

    \node[shape=circle,draw=black, fill=black, label={above: 2}, scale=0.5] (1) at (-6,-1.5) {};
    \node[shape=circle,draw=black, fill=black, label={above: 3}, scale=0.5] (2) at (-5,-1.5) {};
    \node[shape=circle,draw=black, fill=black, label={above: 4}, scale=0.5] (3) at (-4,-1.5) {};
    \node[shape=circle,draw=black, fill=black, label={above: 5}, scale=0.5] (4) at (-3,-1.5) {};
     \node[shape=circle,draw=black, fill=black, label={above: 6}, scale=0.5] (5) at (-2,-1.5) {};
    \node[shape=circle,draw=black, fill=black, label={above left: 8}, scale=0.5] (6) at (-4,-0.5) {};
    \node[shape=circle,draw=black, fill=black, label={above left: 9}, scale=0.5] (7) at (-4,0.5) {};
    \node[shape=circle,draw=black, fill=black, label={ above: 7}, scale=0.5] (8) at (-1,-1.5) {};
   
    \node[shape=circle,draw=black, fill=black, label={below left: 10}, scale=0.5] (9) at (-4,-2.5) {};
    \node[shape=circle,draw=black, fill=black, label={below left: 11}, scale=0.5] (10) at (-4,-3.5) {};
    \node[shape=circle,draw=black, fill=black, label={ above: 1}, scale=0.5] (11) at (-7,-1.5) {};

    \path (2) edge (1);
   \path (2) edge (9);
    \path (2) edge (6);
   
    \path (4) edge (5);
    \path (4) edge (9);
    \path (4) edge (6);
    \path (7) edge (6);
    \path (5) edge (8);
    \path (10) edge (9);
    \path (1) edge (11);
 
   \end{tikzpicture}
   
    \caption{A graph $\G$ such that the based partitions of Figure \ref{fig:RefDis} are $\G$-valid} \label{fig:gamma}
\end{figure}

As we did for pure symmetric automorphisms of free groups, we are going to add \lq\lq markings'' to the elements of $\Wh_\G$ to define a new poset, the $\G$-McCullough-Miller poset, which admits an action of the group $\PAut(A_\G)$.

\begin{defi}[Marked vertex type and the action of $\PAut(A_\G)$] We say that a generating system $X$ of $A_\G$ is a {\sl basis} if there is some $\alpha\in\PAut(A_\G)$ such that $X=\alpha(V_\G)$.
A {\sl marked based partition} is a pair $(X,\ua)$ where $X$ is an ordered basis of $A_\G$ and $\ua$ is a based partition and a {\sl marked vertex type} $(X,\uua)$ is the family of marked based partitions $(X,\ua_a)$ where $\ua_a$ runs over the set of based partitions of a vertex type $\uua$. As in Definition \ref{def:marking}, by marking a based partition $\ua$ we understand that we are relabeling the elements in the partition using $X$ (via the automorphism $\alpha$). We extend the partial order of Definition \ref{def:order} to marked vertex types as follows: $(X_1,\uua)\leq(X_2,\uub)$ if $X_1=X_2$ and $\uua\leq\uub$. The marked vertex types form a poset and the group $\PAut(A_\G)$ acts on this poset via $$\alpha(X,\uua)=(\alpha(X),\uua)$$
where $\alpha\in\PAut(A_\G)$.
\end{defi}

\begin{defi}[Carried automorphism]
    Let $\alpha\in\PAut(\A)$, we say that $\alpha$ is carried by a based partition $\ua_a$ with operative factor $a$ if
     \begin{itemize}
         \item[i)] $\alpha(a)=a$, i.e. the operative factor gets fixed.
         \item[ii)] for any $b\in V_\G-\st_\G(a)$, $\alpha(b)$ is the result of conjugating $b$ by a power of $a$.
         \item[iii)] If $b,c$ lie in the same petal of $\ua_a$, then they get conjugated by the same power of $a$ when applying $\alpha$.
     \end{itemize}
     We say that $\ua_a$ is a \textit{full carrier} of $\alpha$ if it carries $\alpha$ and each petal gets conjugated by a different power of $a$.  If $\alpha$ is not the identity and is carried by some based partition, then a full carrier is uniquely determined,  to see it note that it is enough to take any based partition $\ua_a$ that carries $\alpha$ and then join all the petals where the action is the same. 
     
     We say that $\alpha$ is {\sl carried by a vertex type} $\underline{\ua}$ if $\alpha=\alpha_1\cdots\alpha_k$ where each $\alpha_i$ is carried by some based partition $\ua_a$ of $\underline{\ua}$.

     Finally, we say that $\alpha$ is {\sl carried by a marked vertex type} $(X,\uua)$ if $\gamma\alpha\gamma^{-1}$ is carried by $\uua$ where $\gamma(X)=V_\G$.
\end{defi}
Observe that an automorphism $\alpha$ is carried by a based partition $\ua_a$ if and only if it lies in the subgroup generated by the partial conjugations $C^a_B$ where $B$ runs over the petals of $\ua_a$.  Note also that the full carrier of a partial conjugation $C^a_A$ is the based partition
$$\{\{a\}, A, \G-(\st_\G(a)\cup A)\}.$$
The following is an easy observation that will be useful below.

\begin{lem}\label{lem:ordercarrier} The set of automorphisms carried by a vertex type is a subgroup of $\PAut(A_\G)$ that contains the inner automorphisms. Moreover,
given two vertex types $\uua$ and $\uub$ such that $\uua<\uub$, the subgroup of automorphisms carried by $\uua$ is inside the subgroup of automorphisms carried by $\uub$.
\end{lem}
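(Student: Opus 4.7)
The plan is to verify each claim directly from the definitions, with the key technical input being the interaction between the order on based partitions and condition (iii) in the definition of a carried automorphism.

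For the first claim, I would first check the subgroup property. Closure under products is immediate from the definition, since a product of products of automorphisms carried by based partitions of $\uua$ is again such a product. For closure under inverses, the observation is that if $\beta$ is carried by a based partition $\ua_a$ of $\uua$, so each petal $P$ is conjugated by $a^{k_P}$, then $\beta^{-1}$ acts on each petal $P$ by conjugation by $a^{-k_P}$ and still fixes $a$, hence is also carried by $\ua_a$. Therefore the inverse of a product of carried pieces is again a product of carried pieces.

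Next I would show the subgroup contains $\Inn(A_\G)$. Since inner automorphisms are generated by conjugations by vertices $v\in V_\G$, it suffices to show that conjugation $\iota_v$ by a single $v$ is carried by $\uua$. Write $\ua_v$ for the based partition of $\uua$ with operative factor $v$. Then $\iota_v$ fixes $v$, and on every element $b\in V_\G-\st_\G(v)$ it acts by conjugation by $v^1$; in particular elements in the same petal of $\ua_v$ get conjugated by the same power of $v$. Thus conditions (i)--(iii) all hold, so $\iota_v$ is carried by $\ua_v$, hence by $\uua$.

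For the second claim, assume $\uua<\uub$ and let $\alpha=\alpha_1\cdots\alpha_k$ with each $\alpha_i$ carried by some based partition $\ua_{a_i}$ of $\uua$. It suffices to show that each $\alpha_i$ is carried by the corresponding based partition $\ub_{a_i}$ of $\uub$. Conditions (i) and (ii) in the definition depend only on the operative factor $a_i$, not on the partition, so they hold for $\ub_{a_i}$ exactly as for $\ua_{a_i}$. For condition (iii), recall from Definition \ref{def:order} that $\ua_{a_i}\leq \ub_{a_i}$ means every petal $Q$ of $\ub_{a_i}$ is contained in a petal $P$ of $\ua_{a_i}$. If $b,c$ lie in the same petal $Q$ of $\ub_{a_i}$, they lie in the same petal $P$ of $\ua_{a_i}$, and so by hypothesis get conjugated by the same power of $a_i$; hence (iii) also holds with respect to $\ub_{a_i}$. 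Thus $\alpha_i$ is carried by $\ub_{a_i}\in\uub$, and therefore $\alpha$ is carried by $\uub$.

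There is no serious obstacle here: the only substantive point is the observation that refining a based partition preserves the carried-by property, since condition (iii) becomes easier to satisfy as petals shrink. Everything else is bookkeeping with the definitions.
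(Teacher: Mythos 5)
Your proof is correct, and since the paper states this lemma without any proof (it is presented as ``an easy observation''), your verification is exactly the kind of bookkeeping the authors expected the reader to supply. All three claims are handled properly: closure under products follows from concatenating the factorizations; closure under inverses follows from the fact that inverting an automorphism carried by $\ua_a$ negates the conjugating exponents petal-wise and hence stays carried; containment of $\Inn(A_\G)$ follows by carrying each generating conjugation $\iota_v$ on the based partition with operative factor $v$; and the monotonicity in the order is exactly the observation that condition (iii) is preserved under refining petals, since $\ua_{a}\leq\ub_{a}$ means each petal of $\ub_{a}$ sits inside a petal of $\ua_{a}$ (note you correctly read Definition~\ref{def:order}, where the smaller partition is the coarser one). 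No gaps.
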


In what follows, we will often consider the subgroups of outer automorphisms carried by either a given based partition or a vertex type, which are just the corresponding quotients over $\Inn(A_\G)$.

Next, we define an equivalence relationship in the set of marked vertex types as follows.

\begin{defi}[Vertex types equivalence relation]
    We say that two marked vertex types $(X_1,\underline{\ua})$ and $(X_2,\underline{\ua})$ are equivalent and we put $(X_1,\underline{\ua})\equiv (X_2,\underline{\ua})$ if
there exists an $\alpha\in\PAut(\A)$ carried by $(X_1,\uua)$ such that $\alpha(X_1)=X_2$. We  denote by $[X_1,\underline{\ua}]$ the equivalence class of $(X_1,\underline{\ua})$ under this relation. \label{def:EqRel}
\end{defi} 

In particular, observe that if $\alpha$ is inner, then $[X_1,\underline{\ua}]=[\alpha(X_1),\underline{\ua}]$.

\begin{defi}[$\G$-McCullough-Miller poset]
    The $\G$-McCullough-Miller poset is the poset formed by the set of equivalence classes of marked vertex types under the equivalence relationship $\equiv$ of Definition \ref{def:EqRel} with the induced partial order.

    The group $\PAut(A_\G)$ acts on this poset via $$\alpha([X,\uua])=[\alpha(X),\uua]$$
    and this action factors through the epimorphism $\PAut(A_\G)\to\POut(A_\G)$ so we also have an action of $\POut(A_\G)$.
\end{defi}

The $\G$-McCullough-Miller poset is formed by copies of the $\Gamma$-Whitehead poset attached to each other through the equivalence relation $\equiv$ defined before. It is easy to verify that  nuclear vertex types only carry inner automorphisms, so there is a bijection between the set of nuclear vertices $[X,\OO]$ in the $\G$-McCullough-Miller poset and the set of conjugacy classes of bases of $A_\G$.

\begin{defi}[$\MM_\Gamma$ complex]\label{def:MMG}
    We define the McCullough-Miller complex $\MM_\G$ for a RAAG $A_\Gamma$ as the simplicial realization of the $\G$-McCullough-Miller poset. 
\end{defi}

\begin{lem}\label{lem:intersect}
    The stars of the nuclear vertices $[X_1,\OO]$ and $[X_2,\OO]$ intersect if and only if there is an $\alpha\in\PAut(\A)$ carried by some $(X_1,\underline{\ua})$ such that $\alpha(X_1)=X_2$.
\end{lem}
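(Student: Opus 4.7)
The first step will be to interpret \lq\lq stars intersect\rq\rq\ in the order complex $\MM_\G$. Since the nuclear vertex is the unique minimum of $\Wh_\G$, both $[X_1,\OO]$ and $[X_2,\OO]$ are minimal in the $\G$-McCullough-Miller poset, and so non-empty intersection of their closed stars is equivalent to the existence of a common element $[Z,\uub]$ of the poset with $[X_i,\OO]\leq[Z,\uub]$ for $i=1,2$. Unpacking this order relation via the definitions, and using that only inner automorphisms are carried by the nuclear vertex type, I will obtain inner automorphisms $\gamma_1,\gamma_2$ and elements $\beta_1,\beta_2\in\PAut(A_\G)$ with $\beta_i$ carried by $(\gamma_i(X_i),\uub)$ and $\beta_i\gamma_i(X_i)=Z$. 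The natural candidate for the desired automorphism is then $\alpha:=(\beta_2\gamma_2)^{-1}(\beta_1\gamma_1)$, which visibly sends $X_1$ to $X_2$; the task becomes showing that this $\alpha$ is carried by $(X_1,\uub)$.

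The key ingredient will be a basis-change principle for the subgroup $\mathrm{Carr}(X,\uub)\leq\PAut(A_\G)$ of automorphisms carried by the marked vertex type $(X,\uub)$. Lemma \ref{lem:ordercarrier} gives the subgroup property when $X=V_\G$; for general $X=\eta(V_\G)$, I will note that $\mathrm{Carr}(X,\uub)$ is a conjugate of $\mathrm{Carr}(V_\G,\uub)$ and hence still a subgroup containing $\Inn(A_\G)$. The central observation will be that if $\eta\in\mathrm{Carr}(X,\uub)$, then $\mathrm{Carr}(\eta(X),\uub)=\mathrm{Carr}(X,\uub)$: the two are related by conjugation by $\eta$, which already lies in the subgroup, so the subgroup is fixed setwise. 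Applying this twice, first with $\eta=\gamma_i\in\Inn(A_\G)$ to get $\beta_i\in\mathrm{Carr}(X_i,\uub)$, and then with $\eta=\beta_i\gamma_i$ to get $\mathrm{Carr}(Z,\uub)=\mathrm{Carr}(X_i,\uub)$ for $i=1,2$, I will conclude that $\mathrm{Carr}(X_1,\uub)=\mathrm{Carr}(X_2,\uub)$ and hence that $\alpha$ lies in this common subgroup.

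For the converse, if $\alpha$ is carried by $(X_1,\uua)$ with $\alpha(X_1)=X_2$, then the definition of the equivalence relation forces $[X_1,\uua]=[X_2,\uua]$, and this class is a common upper bound of $[X_1,\OO]$ and $[X_2,\OO]$, so the stars meet at this point. The main obstacle I anticipate is the basis-change principle: it is ultimately a group-theoretic statement but requires carefully tracing how the carrier subgroup transforms under the various conjugations hidden in the definitions. The remaining steps amount to straightforward unpacking of the partial order on equivalence classes, the equivalence relation itself, and the notion of carrying on a marked vertex type.
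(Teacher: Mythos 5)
Your proof is correct and follows essentially the same route as the paper's. The extra work you do --- extracting $\gamma_i$ and $\beta_i$ and invoking the basis-change principle that conjugating the carrier subgroup of $(X,\uua)$ by one of its own elements leaves it fixed --- makes explicit the observation the paper leaves implicit when it asserts that any common upper bound $[Z,\uua]$ of the two nuclear vertices can be represented both as $[X_1,\uua]$ and $[X_2,\uua]$ (using only that inner automorphisms are always carried), after which the definition of $\equiv$ produces the desired $\alpha$ directly.
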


\begin{proof}
    The stars intersect if and only if we can represent a vertex both as $[X_1,\uua]$ and $[X_2,\uua]$. By definition of the equivalence relation (\ref{def:EqRel}) we have that this can only happen if there is an $\alpha\in\PAut(\A)$ carried by $(X_1,\underline{\ua})$ such that $\alpha(X_1)=X_2$.
\end{proof}

\begin{nota}\label{rem:cosetposet} The definition of the action above implies that the stabilizer $H=\Stab_{\POut(A_\G)}[V_\G,\underline{\ua}]$  of the vertex $[V_\G,\underline{\ua}]$ in $\POut(A_\G)$ is precisely the subgroup generated by those outer automorphisms carried by the vertex type $\underline{\ua}$. Therefore if $\alpha$ is such that $X=\alpha(V_\G)$, the stabilizer of the vertex $[X,\underline{\ua}]=\alpha([V_\G,\uua])$ is $\alpha H\alpha^{-1}$. Taking this into account, one sees that the McCullough-Miller complex $\MM_\G$ can be also defined as follows. Recall that for a poset $\mathcal{F}$ of subgroups of a group $G$, one can define a new poset, called the {\sl coset poset}, as
$$C(\mathcal{F})=\{gH\mid H\in\mathcal{F},g\in G\}.$$
Then, the McCullough-Miller poset is the coset poset of the family 
$$\{\Stab_{\POut(A_\G)}[V_\G,\underline{\ua}]\mid \uua\text{ vertex type}\}$$
of subgroups of $\POut(A_\G)$ and $\MM_\G$ is the associated geometric realization.
\end{nota}

We will see below that these stabilizers are free abelian groups, that their rank can be computed in terms of the length of the based partitions of $\uua$ and that they yield in fact all the cell stabilizers of the McCullough-Miller complex $\MM_\G$.

\section{Compatibility, refinement, disjunction}\label{sec:notation}

\subsection{Compatibility and stabilizers}

We begin this section introducing some notation that will be useful to understand when two based partitions are compatible. 
Let $\ua_a$ be a based partition. For each $b\in V_\G$ not linked to $a$ we denote by $D^b_{\ua_a}$ the petal of $\ua_a$ that contains $b$.
\

In the next statement and throughout the paper, we will abuse notation and write $A-B$ instead of $A-(A\cap B)$ for arbitrary sets $A,B$.

\begin{lem}\label{lem:compatible} Let $a,b\in V_\G$ and $\ua_a$ and $\ub_b$ based partitions with $a\not\in\st_\G(b)$. Set
$$\ua_a=\{\{a\},D^b_{\ua_a},P_1,\ldots,P_t\},$$
$$\ub_b=\{\{b\},D^a_{\ub_b},Q_1,\ldots,Q_k\}.$$
Then each of the petals $P_i$ is a union of shared and/or subordinate components of $\G-\st_\G(a)$ respect to $b$, whereas $D^b_{\ua_a}$ contains the dominant component and possibly some of the shared and/or subordinate components.  If there are crossings, say $P_i\cap Q_j\neq\emptyset$, then $P_i\cap Q_j$ is a union of shared components only and the analogous statements hold for $b$.

Moreover, $\ua_a$ and $\ub_b$ are compatible if and only if
$$\begin{aligned}
D^a_{\ub_b}&=P_1\cup\ldots P_t\cup(\st_\G(a)-\lk_\G(b))\text{ and }\\
D^b_{\ub_a}&=Q_1\cup\ldots Q_k\cup(\st_\G(b)-\lk_\G(a)).\\
\end{aligned}$$
\end{lem}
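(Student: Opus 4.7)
The plan is to verify the four assertions in sequence by tracking the connected components of $\G-\st_\G(a)$ and $\G-\st_\G(b)$ via the classification (dominant, shared, subordinate) from Definition \ref{defi:comp}.

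For the structural claim about petals, I would observe that any petal $P_i\ne D^b_{\ua_a}$ satisfies $b\notin P_i$, so it cannot contain the (unique) dominant component of $\G-\st_\G(a)$ respect to $b$; since $\G$-validity makes each petal a union of components, $P_i$ must consist of shared and/or subordinate components only, while $D^b_{\ua_a}$ contains the dominant component by definition. For the intersection claim, suppose $P_i\cap Q_j\neq\emptyset$ with $P_i\neq D^b_{\ua_a}$ and $Q_j\neq D^a_{\ub_b}$, and pick a component $C$ of $\G-\st_\G(a)$ with $C\subseteq P_i$ and $C\cap Q_j\neq\emptyset$. If $C$ were subordinate respect to $b$, then $C$ would lie inside the dominant component of $\G-\st_\G(b)$ respect to $a$, and hence inside $D^a_{\ub_b}$, contradicting $C\cap Q_j\neq\emptyset$. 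So $C$ is shared; as such it is also a component of $\G-\st_\G(b)$, which lies in a single petal of $\ub_b$, necessarily $Q_j$.

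For the compatibility characterization I would combine two observations. First, $\st_\G(a)-\lk_\G(b)\subseteq D^a_{\ub_b}$ holds unconditionally: $a\in D^a_{\ub_b}$ by definition, and any $v\in\lk_\G(a)\setminus\lk_\G(b)$ is joined to $a$ by an edge lying in $\G-\st_\G(b)$, so $v$ sits in the dominant component of $\G-\st_\G(b)$ respect to $a$, hence in $D^a_{\ub_b}$. Second, compatibility is equivalent to $P_1\cup\cdots\cup P_t\subseteq D^a_{\ub_b}$: each $P_i\subseteq V_\G-\st_\G(a)\subseteq V_\G-\lk_\G(b)$ is distributed among the non-operative petals of $\ub_b$, and by the intersection claim being disjoint from every $Q_j$ forces $P_i$ into $D^a_{\ub_b}$, while conversely such an inclusion prevents crossings. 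Together these yield one containment of the first equation; the second equation follows by symmetry.

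The main delicacy is the reverse containment: controlling what lies in $D^a_{\ub_b}$ beyond the $P_i$ and $\st_\G(a)-\lk_\G(b)$. A vertex $v\in D^a_{\ub_b}$ either lies in $\st_\G(a)$ (and so, as $v\notin\lk_\G(b)$, in $\st_\G(a)-\lk_\G(b)$), or it lies in a component of $\G-\st_\G(a)$ and hence in some petal of $\ua_a$; one then has to argue that this petal must be a $P_i$ rather than $D^b_{\ua_a}$, which is the point where the component classification and the joint compatibility of the two partitions need to be combined most carefully. This is the step I expect to be the main obstacle.
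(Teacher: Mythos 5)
Your first paragraph (the classification of the petals and the claim that $P_i\cap Q_j$ is a union of shared components) is correct and matches the paper's argument, as is the observation that under compatibility one gets the containment
\[
P_1\cup\cdots\cup P_t\cup(\st_\G(a)-\lk_\G(b))\subseteq D^a_{\ub_b}.
\]
The reverse containment, which you flag as ``the main obstacle'' --- equivalently the claim $D^a_{\ub_b}\cap D^b_{\ua_a}=\emptyset$ --- is exactly the right thing to worry about, but it is not just delicate: it is false in general. Take $\G$ the edgeless graph on $\{a,b,c,d,e\}$, so every vertex is its own component of $\G-\st_\G(a)$ and of $\G-\st_\G(b)$ (all shared). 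Set $\ua_a=\{\{a\},\{b,c,d\},\{e\}\}$ and $\ub_b=\{\{b\},\{a,c,e\},\{d\}\}$, so $D^b_{\ua_a}=\{b,c,d\}$, $P_1=\{e\}$, $D^a_{\ub_b}=\{a,c,e\}$, $Q_1=\{d\}$. The only pair of petals with $a\notin Q$ and $b\notin P$ is $(\{e\},\{d\})$, which is disjoint, so the partitions are compatible (and $D^b_{\ua_a}\cup D^a_{\ub_b}=V_\G$, consistent with Lemma \ref{lem:disjoint}); yet $D^a_{\ub_b}=\{a,c,e\}\neq\{a,e\}=P_1\cup(\st_\G(a)-\lk_\G(b))$. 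The shared component $\{c\}$ lies in both $D^b_{\ua_a}$ and $D^a_{\ub_b}$, and nothing in the definition of compatibility forbids this.

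So the ``only if'' direction of the lemma fails as literally stated, and this is not a defect of your proposal: the paper's own proof establishes only the containment $\supseteq$ and then passes to equality with an unjustified ``This implies.'' The conclusion should be weakened to the one-sided containment $D^a_{\ub_b}\supseteq P_1\cup\cdots\cup P_t\cup(\st_\G(a)-\lk_\G(b))$ (and the symmetric one for $b$); each of these is individually equivalent to compatibility, each is exactly what you and the paper both prove, and each still yields the ``if'' direction trivially. In other words, you were right to be suspicious of that step, and the correct resolution is to restate the lemma, not to search for an argument for the missing inclusion.
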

\begin{proof} As $\ua_a$ and $\ub_b$ are $\G$-valid, petals must be union of connected components of the complements of the corresponding stars. Obviously, for, say, $a$, the dominant component must be in $D^b_{\ua_a}$ and then the $\G$-validity condition implies $\st_\G(b)-\lk_\G(a)\subseteq D^b_{\ua_a}$. The statement about the $P_i$'s is obvious, and if $P_i\cap Q_j\neq\emptyset$, this intersection can not contain any subordinate component, so it can only consist of shared ones. 

Assume now that $\ua_a$ and $\ub_b$ are compatible. As $a$ and $b$ are not linked, $\ua_a$ and $\ub_b$ must be disjoint. So there are no crossings, meaning that the union $P_1\cup\ldots\cup P_t$ must lie inside $\st_\G(b)\cup D^a_{\ub_b}$. But as the $P_i$'s are union of shared and/or subordinate components, they can not intersect $\st_\G(b)$ so $P_1\cup\ldots\cup P_t\subseteq D^a_{\ub_b}$. This implies 
$$D^a_{\ub_b}=P_1\cup\ldots P_t\cup(\st_\G(a)-\lk_\G(b))$$
    and by symmetry we have the same for $b$. The converse is obvious. 
\end{proof}

\begin{nota}
    Observe that if two based partitions $\ua_a$ and $\ub_b$ cross, then there must be some shared component between $\G-\st_\G(a)$ and $\G-\st_\G(b)$. When this happens, the vertices $a$ and $b$ are said to form a {\sl separating intersection link, or a SIL pair} (see for example \cite{DayWade}).
\end{nota}

We have seen already that the different based partitions associated to a given vertex type must be compatible. This has very strong consequences for the sets of outer automorphisms carried by them, as we shall now see.

\begin{lem}\label{lem:compatible1} Let $\alpha=C_A^a$, $\beta=C_B^b$ be partial conjugations for $a,b\in V_\G$ carried by compatible based partitions $\ua_a$ and $\ub_b$. Then $\alpha$ and $\beta$ commute modulo $\Inn(A_\G)$ and 
either $a\in\st_\G(b)$ or, up to multiplying by inner automorphisms, we may assume $A\cap B=A\cap\st_\G(b)=B\cap\st_\G(a)=\emptyset$. 
\end{lem}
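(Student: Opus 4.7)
The plan is to split into the two cases suggested by the conclusion. First, if $a \in \st_\G(b)$ (that is, $a=b$ or $a$ and $b$ are linked), then $\alpha$ and $\beta$ commute on the nose by part (i) of Theorem \ref{teo:conmu}, and there is nothing further to check. So the substantive case is $a \notin \st_\G(b)$; here compatibility of $\ua_a$ and $\ub_b$ forces disjointness and we may apply the structural description given by Lemma \ref{lem:compatible}.

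Write the based partitions in the form
$$\ua_a = \{\{a\}, D^b_{\ua_a}, P_1, \ldots, P_t\}, \qquad \ub_b = \{\{b\}, D^a_{\ub_b}, Q_1, \ldots, Q_k\},$$
so that by Lemma \ref{lem:compatible} each $P_i$ is a union of shared and/or subordinate components of $\G - \st_\G(a)$ with respect to $b$ (and analogously for the $Q_j$'s), while $D^b_{\ua_a}$ contains the dominant component. Since $\alpha = C^a_A$ is carried by $\ua_a$, the set $A$ is a union of petals of $\ua_a$. If $A$ happens to contain the petal $D^b_{\ua_a}$, I would multiply $\alpha$ by the inner automorphism $C^{a^{-1}}_{V_\G}$ (conjugation by $a^{-1}$); after this replacement the new $A$ is contained in $P_1 \cup \cdots \cup P_t$. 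I perform the symmetric maneuver for $\beta$, obtaining $B \subseteq Q_1 \cup \cdots \cup Q_k$.

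Now the three intersection conditions follow almost for free. The petals $P_i$, being unions of shared or subordinate components of $\G-\st_\G(a)$ relative to $b$, are disjoint from $\st_\G(b)$, giving $A \cap \st_\G(b) = \emptyset$; symmetrically $B \cap \st_\G(a) = \emptyset$. For $A \cap B$, the compatibility assumption gives $\cro(\ua_a,\ub_b)=0$, and since $b \notin P_i$ and $a \notin Q_j$, the absence of crossings forces $P_i \cap Q_j = \emptyset$ for all $i,j$, hence $A \cap B = \emptyset$.

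Finally, to get commutativity modulo inner automorphisms, I would decompose $A$ and $B$ into their connected components, $A = \bigsqcup A_\mu$ and $B = \bigsqcup B_\nu$, so that $\alpha = \prod C^a_{A_\mu}$ and $\beta = \prod C^b_{B_\nu}$. Each pair $(C^a_{A_\mu}, C^b_{B_\nu})$ now falls squarely under case (ii) of Theorem \ref{teo:conmu}: each $A_\mu$ is either shared or subordinate (with respect to $b$), each $B_\nu$ likewise (with respect to $a$), and if both are shared then they are distinct because $A_\mu \cap B_\nu \subseteq A \cap B = \emptyset$. Therefore every such pair commutes, and hence the two resulting products $\alpha$ and $\beta$ commute. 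The only non-routine point is the bookkeeping to guarantee that the modification by inner automorphisms truly removes the dominant petal without affecting the original outer class; once one observes that $C^a_{V_\G}$ acts as global conjugation by $a$ and hence represents the trivial outer automorphism, this step is painless.
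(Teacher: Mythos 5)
Your proof is correct and follows essentially the same path as the paper's: split off the easy case $a\in\st_\G(b)$ via relation (i) of Theorem~\ref{teo:conmu}, then in the non-adjacent case use Lemma~\ref{lem:compatible} to reduce (by multiplying by the global conjugation $C^{a^{-1}}_{V_\G}$) to the situation where $A\subseteq P_1\cup\cdots\cup P_t$ and $B\subseteq Q_1\cup\cdots\cup Q_k$, from which the three emptiness conditions follow and connected-component decomposition puts each pair of factors into case (ii) of Theorem~\ref{teo:conmu}.
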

\begin{proof} If $a$ and $b$ are either equal or adjacent we are in case $i)$ of Theorem \ref{teo:conmu} and $\alpha$ and $\beta$ commute. So from now on we assume that $a\not\in\st_\G(b)$.

Put 
$\ua_a=\{\{a\},D^b_{\ua_a},P_1,\ldots,P_t\}$ and $\ub_b=\{\{b\},D^a_{\ub_b},Q_1,\ldots,Q_k\}$ where, as before, $D^b_{\ua_a}$ and $D^a_{\ub_b}$ are the petals that contain the respective dominant components.

As $C_A^a$ is carried by $\ua_a$, $A$ is a union of (some of) the petals in $\ua_a$. If $D^b_{\ua_a}$ is one of those, then multiplying by the inner automorphism $C_{V_\G}^{a^{-1}}$ we get another representative of the same outer automorphism as $C_A^a$ that acts by conjugation with $a^{-1}$ and for which the corresponding $A$ does not contain the petal $D^b_{\ua_a}$. So, up to multiplying by this inner automorphism we may assume that $A\subseteq P_1\cup\cdots\cup P_{t}$.  The same argument for $\beta$ implies that we may also assume $B\subseteq Q_1\cup\cdots\cup Q_{k}$. This and the fact that there are no crosses implies that decomposing $A$ and $B$ into connected components we may factor $C_A^a$ and $\beta$ as product of partial conjugations which are in case $ii)$ of Theorem \ref{teo:conmu}, so $C_A^a$ and $\beta$ commute. 

Moreover, the conditions above imply $A\cap B=\st_\G(b)\cap A=\st_\G(a)\cap B=\emptyset$ (recall that the petals $P_i$ do not intersect $\st_\G(b)$ and the same for the $Q_j$'s and $\st_\G(a)$).
\end{proof}

As a consequence, we have:

\begin{lem}\label{lem:freeabelian} Let $\uua$ be a vertex type. Then the  outer automorphisms carried by $\uua$ form a free abelian group. This group is precisely the stabilizer of the vertex $v=[V_\G,\uua]$ of $\MM_\G$ under the action of $\POut(A_\G)$.

\end{lem}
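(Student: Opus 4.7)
The plan is to show that the subgroup in question equals the vertex stabilizer by invoking Remark \ref{rem:cosetposet}, then to establish abelianness using the commutativity results already proved in this section, and finally to upgrade abelian to \emph{free} abelian using torsion-freeness of $\POut(A_\G)$.

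By Lemma \ref{lem:ordercarrier}, the outer automorphisms carried by $\uua$ form a subgroup $H$ of $\POut(A_\G)$, and Remark \ref{rem:cosetposet} identifies $H$ with $\Stab_{\POut(A_\G)}[V_\G,\uua]$. Moreover, $H$ is finitely generated: by construction it is generated by the finite set of outer classes $[C_P^a]$ where $a$ ranges over $V_\G$ and $P$ ranges over the petals of $\ua_a$ distinct from $\{a\}$.

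Next I would verify that $H$ is abelian. Consider two generators $C_P^a$ and $C_Q^b$. If $a=b$, then Theorem \ref{teo:conmu}(i) yields $[C_P^a,C_Q^b]=1$ directly in $\PAut(A_\G)$. If $a\neq b$, then the based partitions $\ua_a$ and $\ua_b$ both belong to the vertex type $\uua$ and are therefore compatible by definition, so Lemma \ref{lem:compatible1} gives $[C_P^a,C_Q^b]\in\Inn(A_\G)$. In either case the commutator vanishes in $\POut(A_\G)$, so $H$ is abelian.

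Finally, as noted in the introduction, $\POut(A_\G)$ is residually torsion-free nilpotent by Toinet's result \cite{Toi}, and in particular is torsion-free; hence so is its subgroup $H$. By the structure theorem, a finitely generated, torsion-free, abelian group is free abelian, which completes the proof. The main obstacle one might anticipate is exhibiting an explicit basis of $H$ and verifying independence of the generating partial conjugations; this is sidestepped cleanly by the torsion-free nilpotent residuality of $\POut(A_\G)$, so that no direct rank computation is needed at this stage (the explicit rank will be identified later, in the computation leading to Theorem C).
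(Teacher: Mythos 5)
Your proof is correct and follows essentially the same route as the paper: generate by outer classes of partial conjugations $C^a_P$ on petals, get commutativity from Lemma \ref{lem:compatible1} (which already covers the $a=b$ case internally, so your appeal to Theorem \ref{teo:conmu}(i) is a small redundancy), and upgrade finitely generated abelian to free abelian via torsion-freeness of $\POut(A_\G)$. The identification with the stabilizer via Remark \ref{rem:cosetposet} also matches the paper's pointer to the comment after Lemma \ref{lem:intersect}.
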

\begin{proof} 
The outer automorphism carried by $\uua$ form a subgroup of $\POut(A_\G)$ generated by the images of the  partial conjugations carried by the based partitions of $\uua$. These can be written as products of partial conjugations $C^a_P$ where $P$ is a petal of a based partition $\ua_a$ of $\uua$. Therefore it is finitely generated. It is abelian by Lemma \ref{lem:compatible1} and torsion free because the ambient group $\POut(A_\G)$ is, so it is free abelian. For the last statement, see the comment after Lemma \ref{lem:intersect}.
\end{proof}

We can now deduce Theorem A:

\medskip

\noindent{\sl Proof of Theorem A.} We identify the simplicial realization of the $\G$-Whitehead poset with the subcomplex induced by vertices of the form $[V_\G,\uua]$. By construction,  this is a strong fundamental domain for the action of $\POut(A_\G)$ on $\MM_\G$. For the statement about centralizers, note that we only have to consider stabilizers of cells in the strong fundamental domain. We have seen that the stabilizer of the vertex $[V_\G,\underline{\ua}]$ in $\POut(A_\G)$ is precisely the subgroup generated by those outer automorphisms carried by the vertex type $\underline{\ua}$, moreover Lemma \ref{lem:ordercarrier} implies that the stabilizer of the simplex
$$[V_\G,\uua^0]<[V_\G,\uua^1]<\ldots<[V_\G,\uua^k]$$
is the stabilizer of $[V_\G,\uua^0]$, in other words,
we only have to consider vertex stabilizers so it is enough to use Lemma \ref{lem:freeabelian}. \qed

In fact, the condition that the based partitions are compatible is not only sufficient, it is necessary for the carried outer automorphisms to commute as we shall see now.

\begin{lem}\label{lem:compatible2}
    Two based partitions $\ua,\ub$ are compatible if and only if the outer automorphisms carried by $\ua$ commute with the outer automorphisms carried by $\ub$. \label{lem:compCarac}
\end{lem}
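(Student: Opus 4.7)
The forward direction is essentially a consequence of Lemma \ref{lem:compatible1}. The subgroup of outer automorphisms carried by $\ua_a$ is generated by the images of the partial conjugations $C^a_P$ as $P$ ranges over the petals of $\ua_a$, and analogously for $\ub_b$. If $\ua_a$ and $\ub_b$ are compatible, Lemma \ref{lem:compatible1} ensures that $[C^a_P, C^b_Q] \in \Inn(A_\G)$ for every such pair $P, Q$, and commuting generating sets yield commuting subgroups in $\POut(A_\G)$.

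For the converse we argue by contrapositive: assume $\ua_a$ and $\ub_b$ are not compatible. Then $a \notin \st_\G(b)$ and $\cro(\ua_a, \ub_b) > 0$, so there exist petals $P$ of $\ua_a$ and $Q$ of $\ub_b$ with $b \notin P$, $a \notin Q$ and $P \cap Q \neq \emptyset$. By Lemma \ref{lem:compatible}, $P \cap Q$ is a union of shared components, so every $c \in P \cap Q$ satisfies $c \notin \st_\G(a) \cup \st_\G(b)$. The plan is to show that the particular commutator $[C^a_P, C^b_Q]$ is not inner, which will force the images of $C^a_P$ and $C^b_Q$ in $\POut(A_\G)$ not to commute.

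A direct evaluation on generators gives $[C^a_P, C^b_Q](c) = h\, c\, h^{-1}$ for every $c \in P \cap Q$, where $h = [b^{-1}, a^{-1}] = b^{-1} a^{-1} b a$, while $[C^a_P, C^b_Q](d) = d$ for every other $d \in V_\G$ (including $a$ and $b$). Since $a \notin \st_\G(b)$, the word $b^{-1} a^{-1} b a$ is graphically reduced, so it represents a nontrivial element of $A_\G$ whose normal form has support exactly $\{a, b\}$. Suppose, for contradiction, that $[C^a_P, C^b_Q]$ equals conjugation by some $g \in A_\G$. Triviality on $V_\G - (P \cap Q)$ forces $g$ to centralize both $a$ and $b$; using that in a RAAG the centralizer of a vertex $v$ is the standard parabolic $\langle \st_\G(v) \rangle$ and that standard parabolics intersect as $\langle S \rangle \cap \langle T \rangle = \langle S \cap T \rangle$, we obtain $g \in \langle \lk_\G(a) \cap \lk_\G(b) \rangle$. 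Similarly $h^{-1} g$ centralizes every $c \in P \cap Q$, hence $h^{-1} g \in \langle W \rangle$ with $W = \bigcap_{c \in P \cap Q} \st_\G(c)$.

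Since $c \notin \st_\G(a) \cup \st_\G(b)$ for each $c \in P \cap Q$, neither $a$ nor $b$ belongs to $W$ or to $\lk_\G(a) \cap \lk_\G(b)$. Therefore $g$ and $h^{-1} g$ are both represented by words avoiding $a$ and $b$, and hence so is their product $h = g \cdot (h^{-1} g)^{-1}$. This contradicts the fact that the (unique) normal form of $h$ has support $\{a,b\}$, so $[C^a_P, C^b_Q] \notin \Inn(A_\G)$, completing the contrapositive. The main subtlety lies in invoking the centralizer and parabolic-intersection facts for RAAGs and comparing normal forms; once those are in place the computation is routine.
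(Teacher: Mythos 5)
Your proof is correct and follows essentially the same strategy as the paper: evaluate the (would-be inner) commutator on $a$, $b$, and a crossing element $c\in P\cap Q$, and derive constraints on the conjugating element $g$ that lead to a contradiction. The computation of $[C^a_P,C^b_Q]$ checks out, and the forward direction via Lemma~\ref{lem:compatible1} matches the paper exactly.

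Where you differ is in the finishing move. The paper argues directly with graphically reduced expressions: from $g\in\langle\st_\G(a)\rangle\cap\langle\st_\G(b)\rangle$ it deduces that neither $a$ nor $b$ can appear in a graphically reduced word for $g$, and then shows that a rearranged version of the third equation ($abc^{-1}b^{-1}a^{-1}gbaca^{-1}b^{-1}=g$) cannot hold because the left-hand side remains graphically reduced of strictly larger length. You instead invoke two standard RAAG facts — that centralizers of vertices are the parabolics $\langle\st_\G(v)\rangle$ and that standard parabolics intersect as $\langle S\rangle\cap\langle T\rangle=\langle S\cap T\rangle$ — to conclude that both $g$ and $h^{-1}g$ have support disjoint from $\{a,b\}$, forcing the same for $h=b^{-1}a^{-1}ba$, which is absurd. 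This is arguably a bit cleaner and avoids any fiddly reasoning about cancellation between $a^{-1}$ and prefixes of $g$, at the cost of citing the parabolic intersection fact (which is standard but not stated in the paper). Both versions are sound; yours isolates the support obstruction more transparently, while the paper's stays entirely within the vocabulary it has already set up (graphically reduced words).
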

\begin{proof} We have seen in Lemma \ref{lem:compatible1} that compatibility implies that the carried outer automorphisms commute. 
The converse is obvious if $a$ and $b$ are either equal or adjacent so we assume this is not the case. Put 
$\ua=\{\{a\},D^b_{\ua_a},P_1,\ldots,P_t\}$ and $\ub=\{\{b\},D^a_{\ua_b},Q_1,\ldots,Q_k\}$ where $D^b_{\ua_a}$ and $D^a_{\ua_b}$ contain the respective dominant components so $a\in D^a_{\ua_b}$ and $b\in D^b_{\ua_a}$.  Assume for a contradiction that $\ua$ and $\ub$ are not disjoint so there is at least a cross between two petals $P_i, Q_j$ and take $c\in P_i\cap Q_j$. Consider the partial conjugations $\alpha=C^a_{P_i}$ and  $ \beta=C^b_{Q_j}$ and assume that there is some $g\in A_\G$ such that for the internal automorphism $\gamma\in\Inn(A_\G)$ that consists of conjugation by $g^{-1}$ we have $$\alpha\beta=\gamma\beta\alpha.$$
Then
    $$\begin{aligned}
\alpha\beta(a)=a=gag^{-1}&=
\gamma\beta\alpha(a),\\
\alpha\beta(b)=b=gbg^{-1}&=
\gamma\beta\alpha(b),\\
\alpha\beta(c)=abcb^{-1}a^{-1}&=gbaca^{-1}b^{-1}g^{-1}=
\gamma\beta\alpha(c).\\
    \end{aligned}$$
So $g$ lies in the centralizers of $a$ and of $b$ in $A_\G$. These centralizers are generated by $\st_\G(a)$ and $\st_\G(b)$ respectively and as $b\not\in\st_\G(a)$ and $a\not\in\st_\G(b)$, we deduce that there is no $a$ nor $b$ in any graphically reduced expression of $g$. From the last equation above we have
$$abc^{-1}b^{-1}a^{-1}gbaca^{-1}b^{-1}=g$$
and we see that if $g$ is graphically reduced, so is the left hand expression so we have a contradiction.
    \end{proof}

This result gives another proof of the fact (Lemma \ref{lem:ordercomp}) that for vertex types $\uua$ and $\uub$ with $\uua<\uub$, the corresponding based partitions are compatible, because  Lemma \ref{lem:ordercarrier} implies that the subgroup of carried automorphisms of $\uua$ is inside the subgroup of carried automorphisms of $\uub$.

\begin{defi}[Join]
    If a collection of vertex types $\{\underline{\ua}^1,\ldots,\underline{\ua}^k\}$ has a least upper bound, we denote it by $\underline{\ua}^1\vee\cdots\vee \underline{\ua}^k$, and call it the {\sl join} of $\{\underline{\ua}^1,\ldots,\underline{\ua}^k\}$. 
\end{defi}

\begin{lem}
    A collection $\Omega=\{\underline{\ua}^1,\ldots,\underline{\ua}^k\}$ of vertex types is pairwise compatible if and only if it has an upper bound. Moreover, if it has an upper bound, then it has a least upper bound. \label{lem:ComSup}
\end{lem}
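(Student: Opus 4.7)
The easy direction ("upper bound implies pairwise compatible") is immediate from Lemma \ref{lem:ordercomp}(ii): any two elements lying below a common vertex type must be compatible with each other. So the real content is the other direction, together with existence of a least upper bound. My plan is to construct the join explicitly, as the coarsest common refinement, and then verify that the result is in fact a vertex type.

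More concretely, for each $a \in V_\G$ I will define $\ub_a$ to be the based partition with operative factor $a$ whose remaining petals are the non-empty intersections $P^1\cap\cdots\cap P^k$, where $P^i$ ranges over the non-operative petals of $\ua^i_a$. This is the standard meet construction in the lattice of set partitions; it is $\G$-valid because an intersection of unions of connected components of $\G-\st_\G(a)$ is again a union of connected components. By construction every petal of $\ub_a$ sits inside a petal of each $\ua^i_a$, so $\ua^i_a\leq \ub_a$, and any other common refinement $\uc_a$ must have each of its petals contained in the full intersection of ambient petals, so $\ub_a\leq \uc_a$; thus $\underline{\ub}$ will be the least upper bound \emph{once} we know $\underline{\ub}$ is a vertex type.

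The main obstacle is therefore showing that the based partitions $\ub_a$ and $\ub_c$ are compatible for arbitrary $a\neq c$. If $a\in\st_\G(c)$ there is nothing to do, so assume $a\notin\st_\G(c)$ and suppose for contradiction that $\ub_a$ and $\ub_c$ cross: there exist petals $P\in\ub_a$, $Q\in\ub_c$ with $c\notin P$, $a\notin Q$, and $P\cap Q\neq\emptyset$. Writing $P=P^1\cap\cdots\cap P^k$ and $Q=Q^1\cap\cdots\cap Q^k$, the condition $c\notin P$ yields an index $i_0$ with $c\notin P^{i_0}$, and similarly there is $j_0$ with $a\notin Q^{j_0}$. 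Since $P\cap Q\subseteq P^{i_0}\cap Q^{j_0}$, the pair $(P^{i_0},Q^{j_0})$ exhibits a crossing between $\ua^{i_0}_a$ and $\ua^{j_0}_c$, contradicting their assumed compatibility. This forces $\underline{\ub}$ to be a vertex type.

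The proof then concludes by combining the three points: $\underline{\ub}$ is an upper bound (by construction of the intersection partition), it is \emph{least} among upper bounds (by the universal property of the intersection partition applied coordinate by coordinate in $V_\G$), and the reverse implication follows from Lemma \ref{lem:ordercomp}(ii). The only subtlety, which I expect to need a short explicit check but not more than a line, is that $\{a\}$ is preserved as a petal throughout (intersecting $\{a\}$ with any non-$\{a\}$ petal is empty, so $\{a\}$ appears exactly once in $\ub_a$), so that $\ub_a$ is genuinely a $\G$-valid based partition with operative factor $a$.
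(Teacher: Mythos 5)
Your proof is correct and takes essentially the same route as the paper's: the easy direction is read off from the order-preserves-compatibility lemma, and the join is constructed coordinate-wise as the intersection (common-refinement) partition. The one place where you go beyond the paper's write-up is that you actually verify, via the ``extract indices $i_0,j_0$'' argument, that the constructed $\underline{\underline{B}}$ has pairwise compatible based partitions and so is a genuine vertex type; the paper asserts this compatibility but leaves the check implicit, so your argument usefully completes that step.
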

\begin{proof} Assume first that there is an upper bound. Then Lemma \ref{lem:ordercarrier} implies that the subgroup of carried outer automorphisms of each vertex type in $\Omega$ is inside that of the upper bound. That subgroup is free abelian by Lemma \ref{lem:freeabelian} so those carried outer automorphisms commute with each other and by Lemma \ref{lem:compatible2}, the based partitions of the vertex types in $\Omega$ must be all compatible. It is also easy to provide an alternative direct argument: if there is an upper bound, Lemma \ref{lem:ordercomp} implies that the vertex types in $\Omega$ are all compatible with the upper bound. Suppose that $\underline{\ua}$ and $\underline{\ub}$ are vertices in $\Omega$ that have based partitions which are not compatible with each other. In such a case, if we make divisions of the petals of $\underline{\ub}$, the partitions will remain not compatible (dividing petals can not reduce the number of crosses). However, this leads to a contradiction because by dividing petals in $\underline{\ub}$ we could reach the upper bound, which is compatible with $\underline{\ua}$.

We will show the converse constructively: if $\ua^1_{a}$ and $\ua^2_{a}$ are based partitions with the same operative factor $a$, we define their union $\ua^1_{a} \vee \ua^2_{a}$ as the based partition with the same operative factor $a$ and whose petals are the non-empty intersections of the petals of $\ua^1_{a}$ and $\ua^2_{a}$. More generally, given a collection $\{\ua^1_{a}, \ldots, \ua^m_{a}\}$ of based partitions with the same operative factor, we define $\ua^1_{a} \vee \ldots \vee \ua^m_{a}$ inductively as $((\ua^1_{a} \vee \ua^2_{a}) \vee \ua^3_{a}) \ldots \vee \ua^m_{a}$. Observe that this process keeps the $\G$-validity (intersections of petals are unions of shared components).

For a collection of compatible vertex types $\{\underline{\ua}^1,\ldots,\underline{\ua}^k\}$, we define $\underline{\ua}^1 \vee \ldots \vee \underline{\ua}^k$ as the vertex type whose non-trivial based partition with operative factor $a$ is the union of all the non-trivial based partitions of the family with operative factor $a$. Recall that to ensure that this union is a vertex type, i.e., that its based partitions are pairwise compatible, we need the initial collection to be compatible.  It is obvious that $\underline{\ua}^1 \vee \ldots \vee \underline{\ua}^k$ is an upper bound and is easy to check using the definitions that any other upper bound must be greater or equal than the join. 
Intuitively, when joining two based partitions we are computing their ``least common multiple''.

\end{proof}

\subsection{Refinement, disjunction}

In this subsection we introduce, following \cite{MM}, a few technical notions that  will be needed in the proof that $\MM_\G$ is contractible. Although for some results one can refer to \cite{MM}, we have tried to include full proofs and to provide more details to clarify the arguments, moreover, we need to check that everything works fine under our new $\G$-validity condition. 

\begin{defi}[Refinement]
    Let $\ua_a$ and $\ub_b$ be  based partitions with operative factors $a$ and $b$.  The refinement $\ua'_a$ of $\ua_a$ with respect to $\ub_b$ is defined as follows: if $\ua_a$ and $\ub_b$ are compatible, then $\ua'_a=\ua_a$ and in other case $\ua'_a$ is the based partition with  operative factor $a$, whose petal containing $b$  remains unchanged, i.e., $D^b_{\ua_a}=D^b_{\ua_a'}$, and whose remaining petals are the result of splitting the rest of petals of $\ua_a$ in subpetals according to the crossings with $\ub_b$ so that the elements in each of the non empty intersection $P\cap Q$ for $D^b_{\ua_a}\neq P$ petal of $\ua_a$ and $D^a_{\ub_b}\neq Q$ petal of $\ub_b$ form  a new petal in $\ua_a'$ and there is possibly one more new petal for each such $P$ with all the elements of $P$ not involved in any crossing (see Figure (\ref{fig:RefDis})). 
\end{defi}

    This definition is exactly as in \cite{MM} but in our case
    we must check that the refinement of $\ua_a$ with respect to $\ub_b$ is a $\G$-valid partition if $\ua_a$ and $\ub_b$ are. To do that, let $\ua'_a$ be the refinement of $\ua_a$ with respect to $\ub_b$, we may assume that $\ua_a$ and $\ub_b$ are not compatible and in particular $a$ and $b$ are not adjacent. Suppose that $\ua'_a$ is not $\G$-valid, this means that there exists a petal $P\in \ua_a$ which crosses a petal $Q\in \ub_b$ such that there are at least two vertices $v,w\in P$ which are connected in $\Gamma-\st_\G(a)$ and only one them is in $Q$. Then there is a path connecting $v$ and $w$ lying in $\Gamma-\st_\G(a)$ but $v$ and $w$ are not connected in $\Gamma-\st_\G(b)$, so this path meets $\st_\G(b)$ and therefore connects the three vertices $v,w,b$ in $\Gamma-\st_\G(a)$. As $\ua_a$ is $\G$-valid, this implies $b\in P$, so $P$ contains the dominant component, i.e. $P=D^b_{\ua_a}$. This
contradicts one of the requirements for $P\cap Q$ to be a crossing.

\begin{defi}[Disjunction]
     Let $\ua_a$ and $\ub_b$ be based partitions with operative factors $a,b$ such that $a\notin \st_\G(b)$. 
     Let $\ua'_a$ be the refinement of $\ua_a$ with respect to $\ub_b$. We define the disjunction $\ua''_a$ of $\ua_a$ with respect to $\ub_b$ as the based partition with operative factor $a$ and the following petals: those petals of $\ua_a$ which do not cross $\ub_b$ remain unchanged, and the rest of petals are all joined with $D^b_{\ua_a}$.
\end{defi}

\begin{figure} 
\begin{tikzpicture}
   \centering
    \draw (10,4) node[minimum size=1cm,circle,draw] (O) {3};
       \draw (O.80) to[out=50,in=-10,looseness=10]  node[pos=0.1](C1){} node[pos=0.4](C2){} node[pos=0.8](C3){} node[pos=0.9](C4){}  (O.-40);
\node[fit=(C1)(C2)(C3)(C4)]{$1$\quad \quad$4$};      
 \draw (O.120) to[out=135,in=210,looseness=8]  node[pos=0.1](D1){} node[pos=0.4](D2){} node[pos=0.8](D3){} node[pos=0.9](D4){}  (O.-110);
\node[fit=(D1)(D2)(D3)(D4)]{$11$};      
\draw (10,3) node {$\ua_3$};
 \draw (13.8,4) node[minimum size=1cm,circle,draw] (P) {5};
       \draw (P.80) to[out=50,in=-10,looseness=10]  node[pos=0.1](E1){} node[pos=0.4](E2){} node[pos=0.8](E3){} node[pos=0.9](E4){}  (P.-40);
\node[fit=(E1)(E2)(E3)(E4)]{$9$};      
 \draw (P.120) to[out=135,in=195,looseness=12]  node[pos=0.0](F1){} node[pos=0.1](F2){} node[pos=0.85](F3){} node[pos=0.9](F4){}  (P.-150);
\node[fit=(F1)(F2)(F3)(F4)]{$7$};   
\draw (13.8,3) node {$\ub_5$};

\draw (11.9,6) node {2 \quad \quad 6};

   \draw (11.9,2) node {2 \quad \quad 6};
        \draw (10,0) node[minimum size=1cm,circle,draw] (O) {3};
       \draw (O.100) to[out=80,in=30,looseness=10]  node[pos=0.1](C1){} node[pos=0.4](C2){} node[pos=0.8](C3){} node[pos=0.9](C4){}  (O.40);
\node[fit=(C1)(C2)(C3)(C4)]{$1$};      

 \draw (O.35) to[out=35,in=-10,looseness=12]  node[pos=0.1](C1){} node[pos=0.4](C2){} node[pos=0.8](C3){} node[pos=0.9](C4){}  (O.-40);

 \draw (O.120) to[out=135,in=210,looseness=8]  node[pos=0.1](D1){} node[pos=0.4](D2){} node[pos=0.8](D3){} node[pos=0.9](D4){}  (O.-110);
\node[fit=(D1)(D2)(D3)(D4)]{$11$};      
\draw (10,-1) node {$\ua'_3$};
 \draw (13.8,0) node[minimum size=1cm,circle,draw] (P) {5};
       \draw (P.80) to[out=50,in=-10,looseness=10]  node[pos=0.1](E1){} node[pos=0.4](E2){} node[pos=0.8](E3){} node[pos=0.9](E4){}  (P.-40);
\node[fit=(E1)(E2)(E3)(E4)]{$9$};      
 \draw (P.120) to[out=135,in=195,looseness=12]  node[pos=0.0](F1){} node[pos=0.1](F2){} node[pos=0.85](F3){} node[pos=0.9](F4){} node[pos=0.5](F5){} node[pos=0.8](F6){} (P.-150);
\node[fit=(F1)(F2)(F3)(F4)]{$7$};   
\node[fit=(F5)(F6)]{$4$}; 
\draw (13.8,-1) node {$\ub_5$};

\draw (11.9,-2.3) node {2 \quad \quad 6};
  \draw (10,-4) node[minimum size=1cm,circle,draw] (O) {3};
       \draw (O.100) to[out=80,in=30,looseness=10]  node[pos=0.1](C1){} node[pos=0.4](C2){} node[pos=0.8](C3){} node[pos=0.9](C4){}  (O.40);
\node[fit=(C1)(C2)(C3)(C4)]{$1$};

 \draw (O.120) to[out=135,in=210,looseness=8]  node[pos=0.1](D1){} node[pos=0.4](D2){} node[pos=0.8](D3){} node[pos=0.9](D4){}  (O.-110);
\node[fit=(D1)(D2)(D3)(D4)]{$11$};      
\draw (10,-5) node {$\ua''_3$};
 \draw (13.8,-4) node[minimum size=1cm,circle,draw] (P) {5};
       \draw (P.80) to[out=50,in=-10,looseness=10]  node[pos=0.1](E1){} node[pos=0.4](E2){} node[pos=0.8](E3){} node[pos=0.9](E4){}  (P.-40);
\node[fit=(E1)(E2)(E3)(E4)]{$9$};      
 \draw (P.120) to[out=135,in=195,looseness=12]  node[pos=0.0](F1){} node[pos=0.1](F2){} node[pos=0.85](F3){} node[pos=0.9](F4){} node[pos=0.5](F5){} node[pos=0.8](F6){} (P.-150);
\node[fit=(F1)(F2)(F3)(F4)]{$7$};   
\node[fit=(F5)(F6)]{$4$}; 
\draw (13.8,-5) node {$\ub_5$};
    \end{tikzpicture}
     \caption{Refinement and disjunction of $\ua_3$ respect to $\ub_5$}\label{fig:RefDis}
    \end{figure}

\begin{ej}
    We can see an example of refinement and disjunction in Figure \ref{fig:RefDis} for $\G$-valid based partitions for the graph $\G$ of Figure \ref{fig:gamma}.   
    $$\ua_3=\{\{3\},\{11\},\{1,4\},\{5,6,7,9\}\}\text{ and}$$
    $$\ub_5=\{\{5\},\{4,7\},\{9\},\{1,2,3,11\}\}.$$ The based partitions $\ua_3$ and $\ub_5$ cross precisely at the petals $\{1,4\}\in\ua_3$ and $\{4,7\}\in\ub_5$; recall that intersections with the petals containing the dominant components, $D_{\ua_3}^5=\{5,6,7,9\}$ and $D_{\ub_5}^3=\{1,2,3,11\}$, are not crossings. The refinement and disjunction of $\ua_3$ respect to $\ub_5$ are 
    $$\ua_3'=\{\{3\},\{11\},\{1\},\{4\},\{5,6,7,9\}\}\text{ and }$$ 
    $$\ua_3''=\{\{3\},\{11\},\{1\},\{4,5,6,7,9\}\}.$$ Recall that we do not draw $D_{\ua_3}^5$ nor $D_{\ub_5}^3$  in the flower diagrams, in the terminology of McCullough-Miller, this petals are called \textit{infinity components}.
\end{ej}

If $\ua_a$ and $\ub_b$ are $\G$-valid then
$\ua''_a$ is obviously $\G$-valid as it is obtained from $\ua'_a$ by collapsing petals, and collapsing petals keeps the $\G$-valid property (elements that were together remain together). 

The next properties follow straightforward from the definitions:
\begin{itemize}
    \item[i)] the operative factors of $\ua_a$, $\ua'_a$ and $\ua''_a$ are the same,
    \item[ii)] $\cro(\ua_a,\ub_b)=\cro(\ua'_a,\ub_b)$ and $\cro(\ua''_a,\ub_b)=0$,
    \item[iii)] $\ua_a\leq \ua'_a$ and $\ua_a''\leq \ua_a'$, and
    \item[iv)] if $\ua_a$ and $\ub_b$ are disjoint, then $\ua_a=\ua_a'=\ua_a''$. This always occurs if either $ \ua_a$ or $\ub_b$ is the trivial based partition.
\end{itemize}

\begin{lem}
    Let $\ua_a, \ub_b$ be two not compatible based partitions and let $\ua_a',\ua_a''$ be the refinement and disjunction of $\ua_a$ with respect to $\ub_b$. If $\ua_a, \ub_b$ are compatible with a collection of based partitions then so are $\ua_a',\ua_a''$. \label{lem:collComp}
\end{lem}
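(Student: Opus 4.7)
The plan is to reduce the lemma to the case of the refinement. Since the disjunction $\ua_a''$ is obtained from $\ua_a'$ by merging the petals that crossed $\ub_b$ into $D^b_{\ua_a}$, we have $\ua_a'' \leq \ua_a'$ by construction, and Lemma \ref{lem:ordercomp} then transfers compatibility from $\ua_a'$ to $\ua_a''$. So it suffices to prove that if $\us_s$ is any member of the collection, then $\ua_a'$ is compatible with $\us_s$.

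I would argue by contradiction. If $a\in\st_\G(s)$ there is nothing to show, so I assume $a\notin\st_\G(s)$, which by hypothesis makes $\ua_a$ and $\us_s$ disjoint, and suppose that $\ua_a'$ and $\us_s$ cross at some $P'\in\ua_a'$, $R\in\us_s$, with $P'\cap R\neq\emptyset$, $a\notin R$ and $s\notin P'$. The idea is to trace $P'$ back through the refinement. Either $P' = D^b_{\ua_a}$, which is already a petal of $\ua_a$ and immediately yields a cross between $\ua_a$ and $\us_s$, or $P' = P\cap Q$ with $P$ a non-$D^b$ petal of $\ua_a$ and $Q$ a non-$\{b\}$ petal of $\ub_b$. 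In the latter case, if $s\notin P$ then $(P,R)$ is a cross between $\ua_a$ and $\us_s$, again contradicting disjointness.

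The remaining case, $s\in P$ so that $P=D^s_{\ua_a}\neq D^b_{\ua_a}$, is the heart of the argument and splits on whether $b\in\st_\G(s)$. If $b\notin\st_\G(s)$, then $\ub_b$ and $\us_s$ are disjoint, and since $s\notin Q$, Lemma \ref{lem:compatible} places $Q$ inside $D^b_{\us_s}$; the nonempty intersection $P'\cap R$ then forces $R=D^b_{\us_s}$, and disjointness of $\ua_a$ and $\us_s$ gives $R\subseteq D^s_{\ua_a}=P$, so $b\in P$, contradicting $P\neq D^b_{\ua_a}$. If instead $b\in\st_\G(s)$, then $b\neq s$ is immediate (otherwise $D^s_{\ua_a}=D^b_{\ua_a}$), so $b\in\lk_\G(s)$; now the edge $sb$ lies in $\G-\st_\G(a)$ because $a\notin\st_\G(s)\cup\st_\G(b)$, which forces $s$ and $b$ into the same connected component of $\G-\st_\G(a)$ and hence the same petal of $\ua_a$, again a contradiction. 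I expect this last sub-case to be the main obstacle: it has no direct analogue in the free group setting of \cite{MM} and requires genuine use of the $\G$-validity of the based partitions.
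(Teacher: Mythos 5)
Your overall strategy mirrors the paper's: reduce to the refinement, handle $a\in\st_\G(s)$ trivially, and then split on whether $b\in\st_\G(s)$. Your sub-case $b\in\st_\G(s)$ is essentially the paper's argument (the edge $sb$ forces $s$ and $b$ into the same petal of $\ua_a$, so $D^s_{\ua_a}=D^b_{\ua_a}$), and you helpfully write out the sub-case $b\notin\st_\G(s)$ explicitly where the paper just defers to Lemma~3.4 of \cite{MM}.

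However, there is a gap in your case analysis of the new petal $P'$. You treat the two possibilities ``$P'=D^b_{\ua_a}$'' or ``$P'=P\cap Q$ with $Q$ a petal of $\ub_b$'', but the refinement has a third type of petal: for each $P\neq D^b_{\ua_a}$, the \emph{leftover} petal
$$P' \;=\; P\setminus\bigcup_{Q\text{ petal of }\ub_b,\ Q\neq D^a_{\ub_b}}Q \;=\; P\cap\bigl(\st_\G(b)\cup D^a_{\ub_b}\bigr),$$
consisting of the elements of $P$ not involved in any crossing. In the free-group setting of \cite{MM} this reduces to $P\cap D^a_{\ub_b}$ and so is a set of the form $P\cap Q$, but for general RAAGs $P\cap\lk_\G(b)$ may be nonempty and lies in no petal of $\ub_b$. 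Your sub-case $b\notin\st_\G(s)$ then breaks: you invoke $P'\subseteq Q$ and $s\notin Q$ to conclude $Q\subseteq D^b_{\us_s}$, neither of which makes sense for the leftover petal. (Your sub-cases $s\notin P$ and $b\in\st_\G(s)$ are insensitive to the structure of $P'$ and so survive.)

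The gap is repairable by an argument in the same spirit. Suppose $s\in P$, $b\notin\st_\G(s)$, and $P'$ crosses some petal $R$ of $\us_s$. Since $s\notin\st_\G(b)$, $s$ lies in a petal $D^s_{\ub_b}$ of $\ub_b$; since $s\notin P'$ but $s$ would lie in the leftover whenever $D^s_{\ub_b}=D^a_{\ub_b}$, we must have $D^s_{\ub_b}\neq D^a_{\ub_b}$, so $P\cap D^s_{\ub_b}$ is a genuine new petal of $\ua'_a$ (and it contains $s$). On the other hand $a\notin R$ gives $R\subseteq D^s_{\ua_a}=P$ (disjointness of $\ua_a,\us_s$), hence $b\notin R$, and then disjointness of $\ub_b,\us_s$ gives $R\subseteq D^s_{\ub_b}$. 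So $R\subseteq P\cap D^s_{\ub_b}$, a single petal of $\ua'_a$ containing $s$, which is therefore distinct from $P'$; hence $R\cap P'=\emptyset$, contradiction. This unifies the leftover case with the $P\cap Q$ case, and even shortens your argument for the latter.
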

\begin{proof}
    It is enough to prove that if $\ua_a$ and $\ub_b$ are compatible with a based partition $\us_c$, then so is the refinement $\ua_a'$ of $\ua_a$ with respect to $\ub_b$. Joining petals we can not generate new crosses, so the disjunction will not cause any problem. 

As always,  $a$, $b$ and $c$ are the operative factors of $\ua_a$, $\ub_b$ and $\us_c$ respectively. If $a$ is equal or adjacent to $c$ there is nothing to prove. Now, we distinguish two cases. Assume first that $b$ is not equal or adjacent to $c$. Then the result follows exactly with the same proof as in Lemma 3.4 of \cite{MM} so we are left with the case when $a\not\in\st_\G(c)$, $b\in\st_\G(c)$. This implies that $b$ and $c$ are in the same connected component of $\Gamma - \st_\G(a)$.
    Assume that $\ua_a'$ is not compatible with $\us_c$. As $\ua_a$ was, this means that the petal of $\ua_a$ which contains $c$ has been divided when refining with respect to $\ub_b$; but this is a contradiction, since $b$ is in that petal too, and in consequence crosses can not involve that petal.
\end{proof}

 \begin{lem}\label{lem:charcr} Let  $\ua_a,\underline{\hat A}_a,\us_c$ be based partitions such that $\ua_a<\underline{\hat A}_a$ and $a\not\in\st_\G(c)$.  
 If $\ua_a$ and $\us_c$ are compatible but $\underline{\hat A}_a$ crosses $\us_c$ then $D^c_{\ua_a}$ is split in $\underline{\hat A}_a$.
\end{lem}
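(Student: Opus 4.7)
The plan is to argue by contradiction, assuming that $D^c_{\ua_a}$ remains unsplit in $\underline{\hat A}_a$ and deducing that any crossing between $\underline{\hat A}_a$ and $\us_c$ would pull back to a crossing between $\ua_a$ and $\us_c$, violating compatibility.

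First I would unpack the hypotheses. Since $a \notin \st_\G(c)$, compatibility of $\ua_a$ and $\us_c$ means exactly that $\cro(\ua_a, \us_c) = 0$. Since $\underline{\hat A}_a$ crosses $\us_c$, by the definition of crossing there exist petals $P' \in \underline{\hat A}_a$ and $Q \in \us_c$ with $a \notin Q$, $c \notin P'$, and $P' \cap Q \neq \emptyset$. Because $\ua_a < \underline{\hat A}_a$, every petal of $\underline{\hat A}_a$ is contained in some petal of $\ua_a$, so pick $P \in \ua_a$ with $P' \subseteq P$.

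Now suppose for contradiction that $D^c_{\ua_a}$ is not split in $\underline{\hat A}_a$; equivalently, $D^c_{\ua_a}$ occurs as a petal of $\underline{\hat A}_a$, and it is the unique petal of $\underline{\hat A}_a$ containing $c$, i.e., $D^c_{\underline{\hat A}_a} = D^c_{\ua_a}$. I would then observe that $P \neq D^c_{\ua_a}$: indeed, if $P = D^c_{\ua_a}$ then $P' \subseteq D^c_{\ua_a}$, and since $D^c_{\ua_a}$ is itself a petal of $\underline{\hat A}_a$, the inclusion $P' \subseteq D^c_{\ua_a}$ among petals of $\underline{\hat A}_a$ forces $P' = D^c_{\ua_a}$, contradicting $c \notin P'$. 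Hence $c \notin P$.

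Finally, from $P' \cap Q \neq \emptyset$ and $P' \subseteq P$ we get $P \cap Q \neq \emptyset$, while $a \notin Q$ and $c \notin P$, so the pair $(P,Q)$ witnesses a crossing of $\ua_a$ with $\us_c$, contradicting compatibility. Therefore $D^c_{\ua_a}$ must indeed be split in $\underline{\hat A}_a$. I do not expect any serious obstacle here: the argument is purely combinatorial bookkeeping, and the only subtlety is to use the order relation in the right direction (petals of the finer partition sit inside petals of the coarser one) together with the fact that the petal containing $c$ is determined by the partition.
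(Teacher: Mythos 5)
Your proof is correct and takes essentially the same approach as the paper's: argue by contradiction that the crossing petal $P'$ of $\underline{\hat A}_a$ must sit inside a petal $P_i\neq D^c_{\ua_a}$ of $\ua_a$, so the crossing with $Q$ lifts to a crossing of $\ua_a$ with $\us_c$, contradicting compatibility. Your version is slightly more careful than the paper's in explicitly ruling out $P'\subseteq D^c_{\ua_a}$ before lifting, but the underlying argument is identical.
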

\begin{proof} Set $\ua_a=\{\{a\},D^c_{\ua_a},P_1,\ldots,P_t\}$ and $\us_c=\{\{c\},D^a_{\us_c},Q_1,\ldots,Q_k\}$. Since they are compatible $P_i\cap Q_j=\emptyset$, for any $i,j$. Now, assume that $D^c_{\ua_a}$ is not split in $\underline{\hat A}_a$. Then, some of the $P_i$'s has been split into $P_{i_1},...,P_{i_k}$, and since $\underline{\hat A}_a$ crosses $\us_c$ we have that at least one of those new petals, say $P_{i_1}$ crosses at least one $Q_j$, but this is a contradiction since $P_{i_1}\subset P_i$ and $P_i\cap Q_j=\emptyset$.
\end{proof}

The previous lemma is based on the fact that given two compatible based partitions, the only way for crosses to appear is by splitting at least one of the petals containing the dominant component. Note that the reciprocal is not true, i.e. we could split the petal containing the dominant component and the based partitions could still be compatible, as we can see in the next example:
\begin{ej} Let $\G$ consist of 7 isolated vertices with labels from 1 to 7.
    Take 
    $$\ua_1=\{\{1\},\{2,3\},\{4,5,6,7\}\},$$ 
    $$\us_4=\{\{4\},\{1,2,3,7\},\{5,6\}\},\text{ and}$$ 
    $$\underline{\hat A}_1=\{\{1\},\{2,3\},\{4,5,6\}, \{7\}\}.$$ 
   Then $\ua_1\leq\underline{\hat A}_1$, the petal $D^4_{\ua_1}=\{4,5,6,7\}$ has been divided in $\underline{\hat A}_1$ and both $\ua_1$ and $\underline{\hat A}_1$ are compatible with $\us_4$. 
\end{ej}

\begin{defi}[$\us$-contained]
    Let $\us_c$ be a based partition and $\ua_a$, $\ua_b$ two based partitions of a vertex type $\uua$. 
    We say that $\ua_a$ is {\sl $\us_c$-contained} in $\ua_b$ if $\G-D_{\ua_a}^c\subseteq \G-D_{\ua_b}^c $, i.e., $D_{\ua_b}^c\subseteq D_{\ua_a}^c $.
\end{defi}

\begin{defi}[$\us$-innermost]
    Let $\underline{\ua}$ be a vertex type, $\us_c$ a based partition and $\ua_a$ a based partition in $\uua$. We say that $\ua_a$ is $\us_c$-{\sl innermost} in $\underline{\ua}$ if it crosses $\us_c$ and is minimal with respect to the partial order of $\us_c$-containment in the set of based partitions of $\underline{\ua}$ that cross $\us_c$.
\end{defi}

\begin{lem}\label{lem:innermost} Let $\underline{\ua}$ be a vertex type, $\us_c$ a based partition, $\ua_a$ a based partition of $\uua$ and $\ua_a'$ the refinement of $\ua_a$ respect to $\us_c$.
\begin{itemize}
    \item[i)] If $\ua_a$ is $\us_c$-innermost, then $\ua_a'$ is compatible with all the based partitions of $\uua$ and  if $\ua_b$ is another based partition of $\uua$ which is also $\us_c$-innermost, then the refinement $\ua_b'$ of $\ua_b$ respect to $\us_c$ is compatible with $\ua_a'$.

    \item[ii)] If $\ua_a$ is not $\us_c$-innermost and $\ua_x$ is a based partition of $\uua$ which also crosses $\us_c$ and is properly $\us_c$-contained in $\ua_a$, then $\ua_a'$ and $\ua_x$ are not compatible.
\end{itemize}

\end{lem}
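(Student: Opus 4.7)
My plan is to prove both parts by contradiction, using Lemma~\ref{lem:charcr} to localize any potential crossing of $\ua_a'$ with a partition of $\uua$, Lemma~\ref{lem:compatible} to exploit the compatibility structure, and $\G$-validity of petals as a structural constraint.

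For the first statement of (i), I would assume $\ua_a$ is $\us_c$-innermost and suppose $\ua_a'$ is not compatible with some $\ub_b \in \uua$. After reducing to $a \notin \st_\G(b)$, Lemma~\ref{lem:charcr} forces $D^b_{\ua_a}$ to be split in $\ua_a'$; since only non-$D^c_{\ua_a}$ petals can be split by the refinement, this gives $c \notin D^b_{\ua_a}$, and a short connectivity argument in $\G-\st_\G(a)$ gives $b \notin \st_\G(c)$ (otherwise $b$ and $c$ would lie in the same connected component of $\G-\st_\G(a)$, hence in the same petal of $\ua_a$). Next I would pick a sub-petal $P' \subseteq D^b_{\ua_a}$ of $\ua_a'$ and a non-$D^a$ petal $Q$ of $\ub_b$ witnessing the crossing. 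If $P' = D^b_{\ua_a} \cap \hat{Q}$ for some non-$D^a_{\us_c}$ petal $\hat{Q}$ of $\us_c$, the pair $(Q, \hat{Q})$ directly witnesses $\ub_b$ crossing $\us_c$; if instead $P'$ is the remainder of $D^b_{\ua_a}$, the $\G$-validity of $P'$ (a union of connected components of $\G-\st_\G(a)$) together with the edge between $c$ and any hypothetical $y \in P' \cap \lk_\G(c)$ would force $c \in P'$, contradicting $c \notin D^b_{\ua_a}$, so any $y \in P' \cap Q$ must lie in $D^a_{\us_c}$, and $(Q, D^a_{\us_c})$ again witnesses $\ub_b$ crossing $\us_c$. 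In either case, Lemma~\ref{lem:compatible} applied to $(\ua_a, \ub_b)$ gives $D^c_{\ua_a} \subseteq D^a_{\ub_b} = D^c_{\ub_b}$, and since $a \in D^c_{\ub_b}\setminus D^c_{\ua_a}$ the inclusion is strict, so $\ub_b$ is properly $\us_c$-contained in $\ua_a$ and crosses $\us_c$, contradicting innermostness of $\ua_a$.

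The second statement of (i) reduces to the first: any petal $Q'$ of $\ub_b'$ with $a\notin Q'$ is necessarily a sub-petal of some non-$D^a$ petal $Q$ of $\ub_b$, so a putative crossing of $\ua_a'$ with $\ub_b'$ would yield one of $\ua_a'$ with $\ub_b$, contradicting what we just proved. For (ii), I plan to construct the crossing directly: choose a witness $(R, \hat{Q})$ of $\ua_x$ crossing $\us_c$ with $R$ non-$D^c_{\ua_x}$ and $\hat{Q}$ non-$D^x_{\us_c}$, and pick $z \in R \cap \hat{Q}$. Proper $\us_c$-containment $D^c_{\ua_a}\subsetneq D^c_{\ua_x}$ puts $z$ in some non-$D^c$ petal $S$ of $\ua_a$, hence in a sub-petal $P'$ of $S$ inside $\ua_a'$. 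Whether $\hat{Q} = D^a_{\us_c}$ (making $P'$ the remainder of $S$, contained in $S\cap(D^a_{\us_c}\cup\lk_\G(c))$) or not (making $P' = S \cap \hat{Q}$), the hypothesis $x \notin \st_\G(c)$ (from $\ua_x$ crossing $\us_c$) together with $x \notin \hat{Q}$ will give $x \notin P'$; and Lemma~\ref{lem:compatible} applied to $(\ua_a, \ua_x)$ gives $D^c_{\ua_x} = D^a_{\ua_x}$, so $R$ being non-$D^c_{\ua_x}$ forces $a \notin R$. Hence $(P', R)$ is a crossing of $\ua_a'$ with $\ua_x$.

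The hardest part will be the remainder subcase in (i): the refinement $\ua_a'$ can produce a new petal that is not of the form $D^b_{\ua_a}\cap\hat{Q}$ but rather a leftover supported on $D^a_{\us_c}\cup\lk_\G(c)$, a possibility that does not occur in the free-group setting of \cite{MM}. The crucial new ingredient is the $\G$-validity observation that any vertex of $\lk_\G(c)\cap(\G-\st_\G(a))$ is joined to $c$ by an edge inside $\G-\st_\G(a)$, so must share its connected component with $c$; this is what rules out the problematic configuration and makes the argument go through in our RAAG setting.
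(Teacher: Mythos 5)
Your proof is essentially correct and uses the same underlying machinery as the paper (Lemma~\ref{lem:charcr}, Lemma~\ref{lem:compatible}, and the $\G$-validity/connectivity observation), but the organization is noticeably different, especially in item (i).

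For the first claim of (i), the paper first dispatches the case where $\ua_b$ does not cross $\us_c$ by invoking Lemma~\ref{lem:collComp}, and then, in the remaining case, shows directly that $b\in D^c_{\ua_a}$; compatibility then follows immediately from Lemma~\ref{lem:charcr} because $D^b_{\ua_a}=D^c_{\ua_a}$ is never split. You instead run a contrapositive argument without Lemma~\ref{lem:collComp}: starting from a crossing of $\ua_a'$ with $\ua_b$, you localize the offending sub-petal $P'\subseteq D^b_{\ua_a}$, split into the intersection case and the remainder case, and in the remainder case use $\G$-validity of $P'$ together with the fact that $c$ and any $y\in P'\cap\lk_\G(c)$ would lie in the same component of $\G-\st_\G(a)$ to force $P'\subseteq D^a_{\us_c}$. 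This is a genuine bit of new work that the paper outsources to Lemma~\ref{lem:collComp} (whose proof contains a cognate $\G$-validity argument in the $b\in\st_\G(c)$ sub-case). Your version is more self-contained but longer; the paper's is shorter because it reuses the auxiliary lemma. For item (ii) the two arguments are the same in substance: you pick a concrete witness $z\in R\cap\hat Q$ and track which sub-petal of $\ua_a'$ it lands in, whereas the paper abstracts this into the condition $\emptyset\neq H\cap P_1\subsetneq P_1$ and then exhibits an $H$; both do the same case split on whether the $\us_c$-petal contains $a$.

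One small gap worth patching: in the second claim of (i) you assert that any petal $Q'$ of $\ua_b'$ with $a\notin Q'$ is \emph{necessarily} a sub-petal of some non-$D^a$ petal of $\ua_b$. This is not automatic: if $D^a_{\ua_b}\neq D^c_{\ua_b}$ then $D^a_{\ua_b}$ would itself be split under the $\us_c$-refinement and could produce sub-petals not containing $a$. You need to note that the argument from the first claim (applied to $\ua_b$, which is also $\us_c$-innermost) gives $a\in D^c_{\ua_b}$, i.e. $D^a_{\ua_b}=D^c_{\ua_b}$, and this petal is by definition not split. The paper records exactly this: ``the argument above implies $D^c_{\ua_a}=D^b_{\ua_a}$ and $D^c_{\ua_b}=D^a_{\ua_b}$.'' With that one sentence inserted your reduction of the second claim of (i) to the first is complete.
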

\begin{proof} We begin with item i). Let $\ua_b$ be a based partition of $\uua$. If $b\in\st_\G(a)$ then it is obvious that $\ua_a'$ and $\ua_b$ are compatible.
If $\ua_b$ and $\us_c$ are compatible, then $\ua_a'$ and $\ua_b$ are compatible because of Lemma \ref{lem:collComp}.
In other case, set
$$\ua_a=\{\{a\},D^c_{\ua_a},P_1,\ldots,P_t\}$$
so $c\in D^c_{\ua_a}$. We claim that also $b$ lies in $D^c_{\ua_a}$. This will imply $D^c_{\ua_a}=D^b_{\ua_a}$ and as refinement of $\ua_a$ respect to $\us_c$ does not affect $D^c_{\ua_a}$ we deduce, using Lemma \ref{lem:charcr}, that $\ua_a'$ and $\ua_b$ are compatible.

Assume for a contradiction that $b\not\in D^c_{\ua_a}$. As $b$ is not linked to $a$, it must be in some of the petals of $\ua_a$, say $b\in P_1$. Therefore $P_1=D^b_{\ua_a}$. As $\ua_a$ and $\ua_b$ are compatible, Lemma \ref{lem:compatible} yields
$$D^c_{\ua_a}\cup P_1\cup\ldots\cup P_t\subseteq D^a_{\ua_b}$$
so $c\in D^a_{\ua_b}$ and therefore $D^a_{\ua_b}=D^c_{\ua_b}$. Moreover, $a\in D^a_{\ua_b}$ but $a\not\in D^c_{\ua_a}$ so we have 
$$D^c_{\ua_a}\subsetneq D^a_{\ua_b}.$$
Taking complementaries, this contradicts the fact that $\ua_a$ is $\us_c$-innermost.

Finally, assume that $\ua_b$ is also $\us_c$-innermost. Put
    $$\ua_b=\{\{b\},D^c_{\ua_b},Q_1,\ldots,Q_k\}.$$
    The argument above implies $D^c_{\ua_a}=D^b_{\ua_a}$ and $D^c_{\ua_b}=D^a_{\ua_b}$ so in the process of refining $\ua_a$ and $\ua_b$ respect to $\us_c$, $D^b_{\ua_a}$ and $D^a_{\ua_b}$ are not affected and, using again Lemma \ref{lem:charcr}, we see that $\ua_a'$ and $\ua_b'$ are compatible.

    For item ii), set
   
    $$\ua_x=\{\{x\},D^c_{\ua_x},T_1,\ldots,T_r\}$$
   
As $\ua_x$ is $\us_c$-contained in $\ua_a$, we have
$$\{x\}\cup T_1\cup\ldots\cup T_r\subseteq\{a\}\cup P_1\cup\ldots\cup P_t.$$
Therefore $x$ lies in one of the $P_i$'s and we may assume $x\in P_1$ so $P_1=D^x_{\ua_a}$.
In particular, note that $x$ and $a$ are not linked in $\G$. The fact that $\ua_a$ and $\ua_x$ are compatible (they are based partitions of the same vertex type) implies by Lemma \ref{lem:compatible} that $P_1=D^x_{\ua_a}$ is the union of $\st_\G(x)-\lk_\G(a)$ and all the petals of $\ua_x$ except of the one containing $a$. As $c$ can not lie in $P_1$, this implies that $D^c_{\ua_x}$ is the only petal of $\ua_x$ which  is not in $P_1$, so $a\in D^c_{\ua_x}$, i.e. $D^c_{\ua_x}=D^a_{\ua_x}$ and
\begin{equation}\label{eq:p1}P_1=(\st_\G(x)-lk_\G(a))\cup T_1\cup\ldots\cup T_r.\end{equation}
Using Lemma \ref{lem:compatible}, a necessary condition for $\ua_a'$ and $\ua_x$ to be not compatible is that  when we refine $\ua_a$ respect to $\us_c$, the petal $P_1$ gets (properly) divided, equivalently, that there is some petal $H$ of $\us_c$ such that $a\not\in H$ and 
\begin{equation}\label{eq:petalH}
\emptyset\neq H\cap P_1\subsetneq P_1.    
\end{equation}
But in fact in this case this condition is also sufficient because if there is such an $H$, then (\ref{eq:p1}) implies that one of the new components does not contain $x$ and meets one of the $T_i$'s.

Taking into account that $\ua_x$ and $\us_c$ cross, we see that there is some petal $H_1$ of $\us_c$ with $x\not\in H_1$ and non empty intersection with some of the $T_i$'s, for example we may assume $H_1\cap T_1\neq\emptyset$. As $x\in P_1$ and $T_1\subseteq P_1$ we have
$$\emptyset\neq H_1\cap T_1\subseteq H_1\cap P_1\subsetneq P_1.$$
Now, we distinguish two cases:

- If $a\in H_1$, then there must be some $H_2\neq H_1$ petal of $\us_c$ with $x\in H_2$ so $x$ lies in the intersection $H_2\cap P_1$. As $H_1\cap P_1\neq\emptyset$, we have $H_2\cap P_1\subsetneq P_1$ so we get (\ref{eq:petalH}) with $H=H_2$.

- If $a\notin H_1$, we get (\ref{eq:petalH}) with $H=H_1$.

\end{proof}

\begin{nota}\label{rem:sccontained} Observe that a consequence of the proof of item ii) in Lemma \ref{lem:innermost} is that if $\ua_x,\ua_a$ are based partitions of a vertex type $\uua$ such that both cross a based partition $\us_c$ and $\ua_x$ is strictly $\us_c$-contained in $\ua_a$, then $x$ and $a$ can not be linked in $\Gamma$ and moreover $x$ and all the petals of $\ua_x$ different from $D^c_{\ua_x}$ are inside a single petal $P_1$ of $\ua$ which is also different from $D^c_{\ua_a}$. This can be represented by flower diagrams as in Figure \ref{fig:innermost} where the infinite components for both $\ua_a$ and $\ua_x$ correspond to the petals containing $c$. Note that moreover, the petal $P_1$ crosses some petal of $\us_c$ (one of the petals $H_1$ or $H_2$ at the end of the proof).
\end{nota}

    \begin{figure}

  \makebox[\textwidth][c]{ \begin{tikzpicture}
  \centering
  \draw (3.7,2) node[minimum size=1cm,circle,draw] (P) {$a$};
       \draw (P.80) to[out=50,in=-10,looseness=10]  node[pos=0.1](E1){} node[pos=0.4](E2){} node[pos=0.8](E3){} node[pos=0.9](E4){}  (P.-40);
\node[fit=(E1)(E2)(E3)(E4)]{};      
 \draw (P.180) to[out=185,in=295,looseness=34]  node[pos=0.0](F1){} node[pos=0.1](F2){} node[pos=0.85](F3){} node[pos=0.9](F4){}  (P.-100);
\node[fit=(F1)(F2)(F3)(F4)]{};

     \draw (1.5,2.8) node[minimum size=1cm,circle,draw] (Q) {$c$};
      \draw (Q.-30) to[out=-23,in=-90,looseness=18]  node[pos=0.3](G1){} node[pos=0.6](G2){} node[pos=0.8](G3){} node[pos=0.9](G4){}  (Q.-90);
\node[fit=(G1)(G2)(G3)(G4)]{};
   
        \draw (1.5,-0.5) node[minimum size=1cm,circle,draw] (O) {$x$};
       \draw (O.80) to[out=80,in=15,looseness=10]  node[pos=0.1](C1){} node[pos=0.4](C2){} node[pos=0.8](C3){} node[pos=0.9](C4){}  (O.-20);
\node[fit=(C1)(C2)(C3)(C4)]{};

     \draw (9.7,2.8) node[minimum size=1cm,circle,draw] (Q) {$c$};
      \draw (Q.-54) to[out=-59,in=265,looseness=34.2]  node[pos=0.3](G1){} node[pos=0.4](G2){} node[pos=0.6](G3){} node[pos=0.9](G4){}  (Q.-100);
\node[fit=(G1)(G2)(G3)(G4)]{};

\draw (10.1,2.5) to [out=-30,in=100, looseness=0.8]               (11.1,0.7);
\draw (11.1,0.7) to [out=-80,in=80, looseness=0.8]               (11.03,-1);
\draw (11.03,-1) to [out=-100,in=145, looseness=1]               (11.13,-1.33);
\draw (11.13,-1.33) to [out=-35,in=170, looseness=0.8]               (11.8,-1.55);
\draw (11.8,-1.55) to [out=-10,in=200, looseness=0.3]               (12.5,-1.6);
\draw (12.5,-1.6) to [out=5,in=260, looseness=1.1]               (13.1,-0.9);
\draw (13.1,-0.9) to [out=80,in=-105, looseness=0.8]               (13.3,0);
\draw (13.3,0) to [out=75,in=285, looseness=1]               (13.4,1.8);

\draw (13.4,1.8) to [out=105,in=-40, looseness=0.8]               (13.15,2.35);

\draw (13.15,2.35) to [out=140,in=-10, looseness=1]               (12.5,2.7);

\draw (12.5,2.7) to [out=170,in=10, looseness=0.8]               (11.7,2.75);

\draw (11.7,2.75) to [out=-180,in=20, looseness=0.8]               (11,2.65);

\draw (11,2.65) to [out=200,in=-10, looseness=1]               (10.19,2.68);
   
        \draw (10.5,-0.5) node[minimum size=1cm,circle,draw] (O) {$x$};
       \draw (O.50) to[out=13,in=-22,looseness=11]  node[pos=0.1](C1){} node[pos=0.2](C2){} node[pos=0.6](C3){} node[pos=0.8](C4){}  (O.-30);
\node[fit=(C1)(C2)(C3)(C4)]{};      
 
 \draw (12.7,2) node[minimum size=1cm,circle,draw] (P) {$a$};
       \draw (P.80) to[out=50,in=-10,looseness=10]  node[pos=0.1](E1){} node[pos=0.4](E2){} node[pos=0.8](E3){} node[pos=0.9](E4){}  (P.-40);
\node[fit=(E1)(E2)(E3)(E4)]{};      
 \draw (P.180) to[out=185,in=295,looseness=34]  node[pos=0.0](F1){} node[pos=0.1](F2){} node[pos=0.85](F3){} node[pos=0.9](F4){}  (P.-100);
\node[fit=(F1)(F2)(F3)(F4)]{};

\draw (20,2) node[minimum size=1cm,circle,draw, color=white]  {};

\end{tikzpicture}}
    \caption{Crossings between $\us_c$, $\ua_a$ and $\ua_x$}
    \label{fig:innermost}
\end{figure}

Using Lemma \ref{lem:innermost} i) we deduce that the following notion is well defined.

\begin{defi}[Refinement and disjunction of vertex types] Let $\us_c$ be a based partition.
    The $\us_c$-refinement of a vertex type $\underline{\ua}$ is the result of refining the $\us_c$-innermost based partitions of $\underline{\ua}$ with respect to $\us_c$. In the same way, $\us_c$-disjunction of $\underline{\ua}$ is the result of disjoining the $\us_c$-innermost based partitions of $\underline{\ua}$ with respect to $\us_c$.
    If $\us_c$ and $\uua$ are $\G$-valid, then so is the $\us_c$-refinement of $\uua$. 
\end{defi}

\begin{lem}\label{lem:innermostorder} Let $\us_c$ be a based partition and $\uua\leq\uub$ vertex types. Let
$$\mathcal{I}(\uua)=\{a\in V_\G\mid\ua_a\text{ is $\us_c$-innermost in }\uua\},$$
$$\mathcal{I}(\uub)=\{a\in V_\G\mid\ub_a\text{ is $\us_c$-innermost in }\uub\}.$$
Then:
\begin{itemize}
    \item[i)] $\mathcal{I}(\uub)\subseteq\mathcal{I}(\uua)\cup\{a\in V_\G\mid\ua_a\text{ is compatible with }\us_c\}$ and

    \item[ii)] if $\cro(\uua,\us_c)=\cro(\uub,\us_c)$ then $\mathcal{I}(\uub)=\mathcal{I}(\uua)$.
   
\end{itemize}
\begin{proof}
    We begin with item i). Assume, for a contradiction, that $a\in\mathcal{I}(\uub)$ but $\ua_a$ is not compatible with $\us_c$ and also not $\us_c$-innermost in $\uua$. Then there is a based partition $\ua_x$ of $\uua$ properly $\us_c$-contained in $\ua_a$ that also crosses $\us_c$. For the $\us_c$-refinements we have $\ua_a'\leq\ub_a'$.  Moreover, the hypothesis that $\ub_a$ is $\us_c$-innermost and Lemma \ref{lem:innermost} imply that $\ub_a'$ is compatible with $\ub_x$. As $\ua_x\leq\ub_x$ and $\ua_a'\leq\ub_a'$, we deduce that $\ua_a'$ is also compatible with $\ua_x$. But this contradicts Lemma \ref{lem:innermost}.
    
For item ii), observe first that for any $a\in\mathcal{I}(\uub)$, if $a\not\in\mathcal{I}(\uua)$, then item i) implies that $\ua_a$ is compatible with $\us_c$ and as $\ub_a$ is not, we have a contradiction with the hypothesis that $\cro(\uua,\us_c)=\cro(\uub,\us_c)$. So we have $\mathcal{I}(\uub)\subseteq\mathcal{I}(\uua)$. For the converse, take $a\in\mathcal{I}(\uua)$ and assume $a\not\in\mathcal{I}(\uub)$. Then
$\ua_a$ is $\us_c$-innermost in $\uua$ but $\ub_a$ is not $\us_c$-innermost in $\uub$. Note that as $\ua_a$ is not compatible with $\us_c$, neither can be $\ub_a$. Therefore we can choose some based partition $\ub_x$ of $\uub$ which crosses $\us_c$ and is properly $\us_c$-contained in $\ub_a$ and put
$$\ub_x=\{\{x\},D^c_{\ub_x},L_1,\ldots,L_k\}$$
where (see Remark \ref{rem:sccontained}) $\st_\G(x)-\lk_\G(a)\cup L_1\ldots\cup L_k=Q_1$ where $Q_1$ is a petal of $\ub_a$ which crosses $\us_c$. We have $\ua_x\leq\ub_x$ and the fact that $\ub_x$ crosses $\us_c$ and the hypothesis that $\cro(\uua,\us_c)=\cro(\uub,\us_c)$ imply that also $\ua_x$ crosses $\us_c$. We also have
$$\ua_x=\{\{x\},D^c_{\ua_x},J_1,\ldots,J_r\}$$
where $D^c_{\ua_x}$ is the union of $D^c_{\ua_x}$ and (possibly) some of the $L_i$'s and each $J_j$ is a union of $L_i$'s. Therefore 
\begin{equation}\label{eq:complem}\st_\G(x)-\lk_\G(a)\cup J_1\cup\ldots\cup J_r\subseteq Q_1\subseteq P_1\end{equation}
for some petal $P_1$ of $\ua_a$. If $P_1$ is the petal that contains $c$, then the fact that the petal $Q_1$ crosses $\us_c$ implies that the number of crosses should be strictly less when passing to $\ua_a$, contradicting again the hypothesis. So $P_1\neq D^c_{\ua_a}$ and therefore $P_1\subsetneq\G-D^c_{\ua_a}$ (observe that $a$ lies in the right hand set, but not in $P_1)$ which means that $\ua_x$ is (strictly) $\us_c$-contained in $\ua_a$ and we have a contradiction.
\end{proof}
    
\end{lem}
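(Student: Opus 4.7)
The plan is to handle (i) by a direct contradiction argument using the refinement lemmas from earlier in the section, and then bootstrap (ii) from (i) together with the monotonicity of crossings.

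For (i), let $a \in \mathcal{I}(\uub)$ and suppose $\ua_a$ is not compatible with $\us_c$; I want to show $\ua_a$ is $\us_c$-innermost in $\uua$. If not, pick some $\ua_x$ in $\uua$ that crosses $\us_c$ and is properly $\us_c$-contained in $\ua_a$. Passing from $\uua\leq\uub$ to the corresponding refinements with respect to $\us_c$ preserves the order, so $\ua_a'\leq\ub_a'$, and of course $\ua_x\leq\ub_x$. Since $\ub_a$ is $\us_c$-innermost in $\uub$, Lemma \ref{lem:innermost}(i) gives that $\ub_a'$ is compatible with $\ub_x$, and compatibility is preserved under taking smaller based partitions (Lemma \ref{lem:ordercomp}(i)), so $\ua_a'$ and $\ua_x$ are compatible as well. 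This directly contradicts Lemma \ref{lem:innermost}(ii), which says that a refinement $\ua_a'$ cannot be compatible with a properly $\us_c$-contained crossing based partition $\ua_x$ when $\ua_a$ is not $\us_c$-innermost.

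For (ii), the key preliminary observation is that the monotonicity of crossings (Lemma \ref{lem:ordercomp}) combined with the hypothesis $\cro(\uua,\us_c)=\cro(\uub,\us_c)$ forces $\cro(\ua_b,\us_c)=\cro(\ub_b,\us_c)$ for every $b\in V_\G$, since the totals agree and every individual term can only grow. In particular, $\ua_b$ crosses $\us_c$ if and only if $\ub_b$ does. The inclusion $\mathcal{I}(\uub)\subseteq\mathcal{I}(\uua)$ is then immediate from (i): if $a\in\mathcal{I}(\uub)$ then $\ub_a$ crosses $\us_c$, so $\ua_a$ does too, and (i) forces $a\in\mathcal{I}(\uua)$.

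For the reverse inclusion $\mathcal{I}(\uua)\subseteq\mathcal{I}(\uub)$, suppose $a\in\mathcal{I}(\uua)\setminus\mathcal{I}(\uub)$. Then $\ub_a$ crosses $\us_c$ but is not innermost in $\uub$, so some $\ub_x$ in $\uub$ crosses $\us_c$ and is properly $\us_c$-contained in $\ub_a$. By the individual equality of crossings, $\ua_x$ also crosses $\us_c$, and the remaining task is to deduce that $\ua_x$ is still properly $\us_c$-contained in $\ua_a$, contradicting $a\in\mathcal{I}(\uua)$. Applying Remark \ref{rem:sccontained} to $\ub_x\subsetneq_{\us_c}\ub_a$, all petals of $\ub_x$ other than $D^c_{\ub_x}$ lie in a single petal $Q_1$ of $\ub_a$ distinct from $D^c_{\ub_a}$, and $Q_1$ crosses some petal of $\us_c$; passing from $\uub$ to $\uua$, this $Q_1$ is absorbed into a petal $P_1$ of $\ua_a$. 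The crucial claim is $P_1\neq D^c_{\ua_a}$: otherwise the crossing between $Q_1$ and that petal of $\us_c$ would be lost upon coalescing into $D^c_{\ua_a}$, which would strictly drop $\cro(\ua_a,\us_c)$ below $\cro(\ub_a,\us_c)$ and contradict the individual equality. Once $P_1\neq D^c_{\ua_a}$, combining $\{x\}\cup J_1\cup\cdots\cup J_r\subseteq P_1$ with a quick bookkeeping check for $\lk_\G(x)$ (using $\G$-validity and $\st_\G(x)-\lk_\G(a)\subseteq Q_1$) gives $\G-D^c_{\ua_x}\subsetneq\G-D^c_{\ua_a}$, as required.

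The main obstacle is precisely this last step in (ii): propagating the containment $\ub_x\subsetneq_{\us_c}\ub_a$ downward to $\ua_x\subsetneq_{\us_c}\ua_a$. Monotonicity is not enough by itself, one has to exploit the hypothesis on crossings to rule out the degenerate case in which an innermost-crossing petal gets swallowed into $D^c_{\ua_a}$.
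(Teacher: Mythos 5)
Your proposal is correct and follows essentially the same approach as the paper: part (i) by refining and invoking Lemma \ref{lem:innermost} to derive a contradiction, and part (ii) by exploiting the fact that equality of total crossings forces equality of individual crossings $\cro(\ua_b,\us_c)=\cro(\ub_b,\us_c)$ (the paper uses this implicitly), then propagating a strict $\us_c$-containment downward and ruling out the case $P_1=D^c_{\ua_a}$ via the crossing hypothesis. The only cosmetic difference is that you make the term-by-term equality of crossings an explicit preliminary observation, whereas the paper derives contradictions ad hoc from the totals; both paths are equivalent.
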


Finally, the proof of the next result is a direct consequence of the definition.

\begin{lem}\label{lem:refMantieneOrden}
Let $\us$ be a based partition. Suppose that $\underline{\ua},\underline{\ub}$ are two vertex types with $\underline{\ua}<\underline{\ub}$ and $\mathcal{I}\subseteq V_\G$ a set such that for every $a\in\mathcal{I}$ the based partition $\ub_a$ of $\uub$ is either compatible with $\us_c$ or $\us_c$-innermost (so $\ua_a$ has the same property in $\uua$ because of Lemma \ref{lem:innermostorder}). Let $\uua'$ and $\underline{\ub}'$ be the $\us$-refinements and $\uua''$ and $\uub''$ the $\us$-disjunctions of $\uua$ and $\uub$ respect to $\mathcal{I}$. Then $\uua'\leq\uub'$ and $\uua''\leq\uub''$. 
\end{lem}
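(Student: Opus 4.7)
The plan is to verify the two inequalities one operative factor at a time: fix $a\in V_\G$ and show $\ua_a'\leq\ub_a'$ and $\ua_a''\leq\ub_a''$. For $a\notin\mathcal{I}$, refinement and disjunction leave $\ua_a$ and $\ub_a$ unchanged, and the inequality is just $\ua_a\leq\ub_a$. For $a\in\mathcal{I}$, the hypothesis together with Lemma \ref{lem:innermostorder} forces each of $\ua_a,\ub_a$ to be either compatible with $\us_c$ (in which case the refinement/disjunction is trivial on that partition) or $\us_c$-innermost. Since Lemma \ref{lem:ordercomp} rules out the possibility of $\ua_a$ being innermost while $\ub_a$ is only compatible, only three subcases survive.

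The fully compatible subcase is trivial. In the mixed subcase ($\ua_a$ compatible, $\ub_a$ innermost), the refinement inequality reads $\ua_a'=\ua_a\leq\ub_a\leq\ub_a'$. For the disjunction I will invoke Lemma \ref{lem:charcr}: since $\ub_a$ crosses $\us_c$ and $\ua_a$ does not, $D^c_{\ua_a}$ must have been split in passing from $\ua_a$ to $\ub_a$, so every petal of $\ub_a$ which crosses $\us_c$ sits inside $D^c_{\ua_a}$. Together with $D^c_{\ub_a}\subseteq D^c_{\ua_a}$, this shows the big merged petal of $\ub_a''$ is contained in $D^c_{\ua_a}$, while the non-crossing petals of $\ub_a''=\ub_a$ sit inside petals of $\ua_a=\ua_a''$ by $\ua_a\leq\ub_a$.

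The main subcase is when both $\ua_a$ and $\ub_a$ are $\us_c$-innermost. Here I will chase containments directly from the construction. A petal of $\ub_a'$ is either $D^c_{\ub_a}\subseteq D^c_{\ua_a}$ (a petal of $\ua_a'$), or of the form $Q\cap X$ for $Q\neq D^c_{\ub_a}$ a petal of $\ub_a$ and $X$ a petal of $\us_c$; taking the petal $P$ of $\ua_a$ containing $Q$ gives $Q\cap X\subseteq P\cap X$, which is either a petal of $\ua_a'$ or a subset of $D^c_{\ua_a}$. For disjunction, a non-crossing petal of $\ub_a$ lies inside a petal of $\ua_a$ that is either non-crossing (a petal of $\ua_a''$) or absorbed into the big petal of $\ua_a''$; and the big petal of $\ub_a''$ lies inside the big petal of $\ua_a''$ because any petal $P$ of $\ua_a$ containing a $\us_c$-crossing petal $Q$ of $\ub_a$ is itself either $D^c_{\ua_a}$ or crosses $\us_c$ via the same witness $H$ (using $a\notin H$ and $Q\cap H\neq\emptyset$).

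The main obstacle is the last observation: showing that a crossing witness for $Q$ lifts to a crossing witness for any larger petal $P\neq D^c_{\ua_a}$ of $\ua_a$. This is immediate once one writes out the definition of crossing, but it is the point where the special role of the petal containing $c$ must be carefully separated from the others; the rest is routine bookkeeping.
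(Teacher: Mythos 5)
Your proof is correct, and since the paper dispatches this lemma with ``the proof of the next result is a direct consequence of the definition,'' your componentwise verification (fix $a$, split into the three surviving cases, chase containments) is precisely the routine argument the authors have in mind. Two small points are worth tightening. First, in the mixed subcase the containment of each crossing petal of $\ub_a$ inside $D^c_{\ua_a}$ does not actually follow from the \emph{statement} of Lemma \ref{lem:charcr}, which only asserts that $D^c_{\ua_a}$ gets split; what you need is the stronger fact (visible in the proof of Lemma \ref{lem:charcr}, or directly from Lemma \ref{lem:compatible}) that disjointness of $\ua_a$ and $\us_c$ forces $P\cap X=\emptyset$ for every petal $P\neq D^c_{\ua_a}$ of $\ua_a$ and $X\neq D^a_{\us_c}$ of $\us_c$, whence any crossing petal $Q$ of $\ub_a$ must lie in $D^c_{\ua_a}$. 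Second, in the both-innermost subcase your inventory of the petals of $\ub_a'$ omits the leftover petal of each $Q$ (the elements of $Q$ lying in $\lk_\G(c)\cup D^a_{\us_c}$); the containment extends immediately, since the leftover of $Q$ sits inside the leftover of $P$ when $P\neq D^c_{\ua_a}$ and inside $D^c_{\ua_a}$ otherwise, but it should be said. Neither issue affects the conclusion.
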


\section{Lemmas of reductivity}\label{sec:reductivity}

In this section we introduce some  notions that will be used below to prove that $\MM_\G$ is contractible. We follow \cite{MM} for definitions  but we need to provide new versions of the proofs adapted to our situation. To do that we will need to use technical results from \cite{BDay} about the action of symmetric automorphisms of RAAGs on graphically reduced expressions for elements  

\begin{defi}[Length]
Let $g$ be an element of $A_\G$ and $X$ a generating family. By {\sl cyclic word representative} of $g$ we mean a word $w$ in the alphabet $X^\pm$ such that some cyclic permutation of $w$ represents $g$ and by {\sl cyclic word length} $|g|_X$ of $g$ in $X$ we mean the shortest possible length of a cyclic word representative of $g$. It is easy to see that $|g|_X$ is invariant in conjugacy classes both for $g$ and $X$, i.e. $$|g|_X=|g^h|_X=|g|_{X^h}.$$

 We also say that a cyclic word is {\sl graphically reduced} if the word itself and all its cyclic permutations are graphically reduced. In the case when $X=V_\G$, the cyclic word length is precisely the length of any  graphically reduced cyclic word representative.

Let $u=[X,\OO]$ be a nuclear point of $\MM_\G$ and $W\subseteq A_\G$ a finite set, we define the {\sl height} of $u$ with respect to $W$ as
$$\|u\|_W=\sum_{g\in W}|g|_X.$$ 
\end{defi}

Obviously, this length is not invariant under the action of automorphisms, however we have

\begin{lem} Let $X$ be a basis of $A_\G$, then for any $g\in A_\G$ and any $\alpha\in\mathrm{Aut}(A_\Gamma)$,
$$|g|_X=|\alpha(g)|_{\alpha(X)}.$$
\end{lem}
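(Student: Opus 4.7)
The plan is to show both inequalities $|\alpha(g)|_{\alpha(X)} \leq |g|_X$ and $|g|_X \leq |\alpha(g)|_{\alpha(X)}$ directly from the definition of cyclic word length, using the purely combinatorial fact that an automorphism carries words in $X^{\pm}$ to words of the same length in $\alpha(X)^{\pm}$, and carries cyclic permutations to cyclic permutations.

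For the first inequality, I would start from a shortest cyclic word representative $w = y_1^{\epsilon_1}\cdots y_k^{\epsilon_k}$ of $g$ over $X^{\pm}$, where $k = |g|_X$, so that some cyclic permutation of $w$ equals $g$ in $A_\G$. Applying $\alpha$ letter by letter gives a word $\alpha(w) = \alpha(y_1)^{\epsilon_1}\cdots\alpha(y_k)^{\epsilon_k}$ in the alphabet $\alpha(X)^{\pm}$ of the same length $k$; since $\alpha$ is a group homomorphism, the corresponding cyclic permutation of $\alpha(w)$ represents $\alpha(g)$. Hence $\alpha(w)$ is a cyclic word representative of $\alpha(g)$ over $\alpha(X)^{\pm}$, yielding $|\alpha(g)|_{\alpha(X)} \leq k = |g|_X$.

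For the reverse inequality, I would apply exactly the same argument with the automorphism $\alpha^{-1}$, the basis $\alpha(X)$ and the element $\alpha(g)$ in place of $\alpha$, $X$ and $g$ respectively, obtaining $|g|_X = |\alpha^{-1}(\alpha(g))|_{\alpha^{-1}(\alpha(X))} \leq |\alpha(g)|_{\alpha(X)}$. Combining the two inequalities yields the equality.

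There is no real obstacle here: the lemma is essentially a tautology once one recognises that both the word length and the relation ``some cyclic permutation represents $g$'' are preserved under relabeling generators by an automorphism. The only mild point of care is checking that $\alpha(X)$ is in fact a generating set so that $|\alpha(g)|_{\alpha(X)}$ is well defined, which is immediate since $\alpha$ is surjective on $A_\G$ and sends the generating set $X$ to $\alpha(X)$.
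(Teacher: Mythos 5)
Your proof is correct. The paper itself states this lemma without proof, evidently regarding it as immediate from the definition of cyclic word length, and your relabeling argument (apply $\alpha$ letter by letter to a shortest cyclic representative, then use $\alpha^{-1}$ for the reverse inequality) is exactly the standard justification one would supply.
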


In the rest of the section we fix a finite set $W$
 of graphically reduced cyclic words.

\begin{defi}[Reductivity]\label{def:reductivity}
    Let $u=[X,\OO]$ a nuclear point of $\MM_\G$. We define the \textit{reductivity} of $\alpha\in\PAut(A_\Gamma)$ at $u$ respect to $W$ as
    $$\red_W(X,\alpha)=\|u\|_W-\|\alpha(u)\|_W=\sum_{g\in W}|g|_X-\sum_{g\in W}|g|_{\alpha(X)}$$
    and when $X=V_\G$ we will just denote 
$$\red_W(\alpha):=\red_W(V_\G,\alpha).$$
\end{defi}

In the next Lemma, we collect some easy properties for later use.

\begin{lem}\label{lem:split}\label{eq:redalpha} Let $\alpha,\beta,\gamma\in\PAut(A_\Gamma)$. Then 
\begin{itemize}
\item[i)] $\red_W(X,\alpha)=\red_{\gamma(W)}(\gamma(X),\gamma\alpha\gamma^{-1})$ and in particular, if $\gamma(X)=V_\G$, then
$\red_W(X,\alpha)=\red_{\gamma(W)}(\gamma\alpha\gamma^{-1})$, 

\item[ii)] if $\alpha$ is inner, then  $\red_W(X,\alpha)=0$ and $\red_{\alpha(W)}(X,\beta)=\red_W(\alpha(X),\beta)=\red_W(X,\beta\alpha)=\red_W(X,\beta)$,

\item[iii)] $\red_W(X,\alpha\beta)=\red_W(X,\beta)+\red_{W}(\beta(X),\alpha)$,

\item[iv)] $\red_W(\alpha\beta)=\red_W(\beta)+\red_{\beta^{-1}W}(\beta\alpha\beta^{-1})$.
   \end{itemize} 
   \end{lem}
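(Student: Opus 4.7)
The plan is to derive all four items from two basic properties already in hand: the invariance $|g|_X=|f(g)|_{f(X)}$ valid for any $f\in\Aut(A_\G)$, together with the fact that $|g|_X$ depends only on the conjugacy classes of $g$ and of $X$, i.e., $|g|_X=|g^h|_X=|g|_{X^h}$ for any $h\in A_\G$.

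For (i), I will unfold the right-hand side from the definition of reductivity and reindex the sum via $g=\gamma(h)$ with $h\in W$. The invariance lemma applied with $f=\gamma$ will then rewrite $|\gamma(h)|_{\gamma(X)}$ as $|h|_X$ and, using $(\gamma\alpha\gamma^{-1})(\gamma(X))=\gamma\alpha(X)$, will rewrite $|\gamma(h)|_{\gamma\alpha(X)}$ as $|h|_{\alpha(X)}$, recovering the left-hand side. The ``in particular'' statement follows immediately from the convention $\red_W(\alpha)=\red_W(V_\G,\alpha)$.

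For (ii), if $\alpha$ is the inner automorphism given by conjugation by some $h\in A_\G$, then $\alpha(X)=X^h$ elementwise, so the conjugation invariance of $|\cdot|$ with respect to the basis gives $|g|_{\alpha(X)}=|g|_X$ for every $g$; this yields $\red_W(X,\alpha)=0$. Each subsequent equality in the chain amounts to replacing either the active basis or the set $W$ by a conjugate version: $\alpha(W)=\{g^h\mid g\in W\}$ is conjugate elementwise to $W$, and $\beta\alpha(X)=\beta(X)^{\beta(h)}$ is a conjugate of the basis $\beta(X)$. Each such replacement leaves the relevant cyclic word length unchanged, so the four quantities coincide.

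For (iii), the identity is purely telescoping: expanding both sides via the definition, the two terms $\pm\sum_{g\in W}|g|_{\beta(X)}$ cancel, leaving exactly $\red_W(X,\alpha\beta)$. Then (iv) will follow by specializing (iii) to $X=V_\G$, which gives $\red_W(\alpha\beta)=\red_W(\beta)+\red_W(\beta(V_\G),\alpha)$, and applying (i) with $\gamma=\beta^{-1}$ to the second summand to bring the basis back to $V_\G$; this conjugates $\alpha$ accordingly and replaces $W$ by $\beta^{-1}(W)$, yielding the stated identity.

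No step here presents a real obstacle: this lemma is preparatory bookkeeping, and the only care needed is to keep track of which basis and which set of cyclic words is active at each step. The more substantial input from \cite{BDay} will only become relevant in the subsequent results of this section.
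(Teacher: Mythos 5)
Your proposal matches the paper's proof exactly: items (i) and (ii) are read off the definition together with the invariance and conjugacy properties of cyclic word length, (iii) is the telescoping computation, and (iv) is obtained by specializing (iii) to $X=V_\G$ and then applying (i) with $\gamma=\beta^{-1}$ to the second summand. One small caveat: if you actually write out the last step, applying (i) with $\gamma=\beta^{-1}$ conjugates $\alpha$ to $\gamma\alpha\gamma^{-1}=\beta^{-1}\alpha\beta$, not $\beta\alpha\beta^{-1}$ as the lemma statement (and the paper's own proof sketch) has it — this appears to be a typo in (iv), so your phrase ``yielding the stated identity'' is slightly off, but your method is sound and coincides with the paper's.
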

\begin{proof} Items i) and ii) are direct consequences of the definition.
For iii),
$$\begin{aligned}
\red_W(X,\alpha\beta)=\sum_{g\in W}|g|_X-\sum_{g\in W}|g|_{\alpha\beta(X)}=
\sum_{g\in W}|g|_X-\sum_{g\in W}|g|_{\beta(X)}+\sum_{g\in W}|g|_{\beta(X)}-\sum_{g\in W}|g|_{\alpha\beta(X)}\\
=\red_W(X,\beta)+\red_{W}(\beta(X),\alpha).\\
\end{aligned}$$
Finally, iv) follows from iii) in the case when $X=V_\G$ using also that i) implies
$$\red_W(\beta(V_\G),\alpha)=\red_{\beta^{-1}W}(\beta\alpha\beta^{-1}).$$
\end{proof}

    \begin{defi}
    We say that $\alpha$ is {\sl reductive at $u=[X,\OO]$ respect to $W$} if $\red_W(X,\alpha)\geq 0$ and {\sl strictly reductive at $u$ respect to $W$} if $\red_W(X,\alpha)>0$. The \textit{reductivity of a vertex} $[X,\uua]$ of $\MM_\G$ is defined as 0 if $\uua$ is trivial and as
    $$\red_W[X,\underline{\ua}]=\max\{\red_W(X,\alpha)\mid\alpha\text{ carried by }[X,\uua],\alpha\not\in\Inn(A_\G)\}$$
in other case. If $\gamma(X)=V_\G$, then 
\begin{equation}\label{eq:redpoint}
    \red_W[X,\underline{\ua}]=\red_{\gamma(W)}[V_\G,\underline{\ua}].
    \end{equation}
 We say that a vertex $[X,\underline{\ua}]$ is {\sl reductive at $W$} if $\red_W[X,\underline{\ua}]\geq 0$ and {\sl strictly reductive}  if $\red_W[X,\underline{\ua}]> 0$. We define in the same way the reductivity of a marked based partition, i.e. as 0 in the trivial case and
    $$\red_W(X,\ua_a)=\max\{\red_W(X,\alpha)\mid\alpha\text{ carried by}(X,\ua_a),\alpha\not\in\Inn(A_\G)\}$$
    otherwise. We  say that $(X,\ua_a)$ is \textit{fully (strictly) reductive} if there exists an $\alpha\in\PAut(A_\G)$ (strictly) reductive at $X$ whose full carrier is $(X,\ua_a)$.
    We also put
    $$\red_W(\ua_a)=\red_W(V_\G,\ua_a)$$
    and again if $\gamma(X)=V_\G$, then 
    $$\red_{W}(X,\ua_a)=\red_{\gamma(W)}(V_\G,\ua_a)=\red_{\gamma(W)}(\ua_a).$$
    
\end{defi}

\begin{lem}\label{lem:ordenRed}
    If $\underline{A}_a\geq\underline{B}_a$ and $\ub_a$ is not trivial, then for any basis $X$,
    $\red_W(X,\underline{A}_a)\geq\red_W(X,\underline{B}_a)$. 
\end{lem}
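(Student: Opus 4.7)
The plan is to deduce the inequality directly from the definitions by showing that the set of non-inner automorphisms over which the maximum is taken in $\red_W(X,\underline{B}_a)$ is a subset of the corresponding set for $\red_W(X,\underline{A}_a)$.

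First, I would observe that since $\underline{B}_a$ is not trivial, it has at least two petals, and the hypothesis $\underline{A}_a \geq \underline{B}_a$ forces $l(\underline{A}_a) \geq l(\underline{B}_a) \geq 2$, so $\underline{A}_a$ is also non-trivial. Hence both reductivities are defined via the displayed maximum rather than the default value $0$, and the sets over which the maxima are taken are nonempty.

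Next, write $\underline{B}_a = \{\{a\}, Q_1, \ldots, Q_k\}$ and $\underline{A}_a = \{\{a\}, P_1, \ldots, P_m\}$; the assumption $\underline{A}_a \geq \underline{B}_a$ means that each $Q_j$ is a disjoint union of some of the $P_i$'s. If $\alpha \in \PAut(A_\Gamma)$ is carried by $(X, \underline{B}_a)$, then $\alpha(a) = a$ and all elements lying in the same $Q_j$ get conjugated by a common power of $a$; in particular, the elements of each $P_i$ (which is contained entirely in one $Q_j$) get conjugated by a common power of $a$. Therefore $\alpha$ is also carried by $(X, \underline{A}_a)$. This is the natural single-based-partition version of the containment of carried-automorphism subgroups already recorded in Lemma \ref{lem:ordercarrier}.

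Finally, the inclusion of carried non-inner automorphisms gives
\[
\red_W(X,\underline{A}_a) = \max\{\red_W(X,\alpha) \mid \alpha \text{ carried by } (X,\underline{A}_a),\, \alpha \notin \Inn(A_\Gamma)\}
\]
where the maximum ranges over a set containing all non-inner $\alpha$ carried by $(X,\underline{B}_a)$, and so it is at least $\red_W(X,\underline{B}_a)$. There is no substantive obstacle here: the statement is essentially a monotonicity fact that follows immediately once one notes that finer partitions carry more automorphisms and that non-triviality is inherited upward in the poset.
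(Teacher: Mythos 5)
Your proof is correct and takes essentially the same approach as the paper: the paper argues, more briefly, that $\underline{A}_a$ is obtained from $\underline{B}_a$ by dividing petals, so every automorphism carried by $(X,\underline{B}_a)$ is carried by $(X,\underline{A}_a)$, and the maximum defining reductivity can only increase. Your version is a bit more explicit about non-triviality being inherited and about the containment of carried-automorphism sets, but the underlying argument is the same.
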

\begin{proof} By Definition \ref{def:reductivity} we may assume $X=V_\G$. If $\underline{B}_a\leq \underline{A}_a$, then $\underline{A}_a$ can be obtained from $\underline{B}_a$ dividing petals, so $\underline{A}_a$ will carry at least all the automorphisms carried by $\underline{B}_a$ (since there are less restrictions), so reductivity will at least be the same. 

\end{proof}

\subsection{The factorization and existence lemmas}
\begin{defi}
    Recall that we are denoting $L=V_\G^\pm$. We extend the notation $\lk_L(b)$ for $b\in L$ so that $\lk_L(b)$ is the set of elements $z\in L$ such that the vertex of $\Gamma$ corresponding to $z$ is linked to the vertex of $\G$ corresponding to $b$, $\st_L(b)$ is defined analogously. Let $w$ be a cyclically graphically reduced word in $L$. Take $c, d  \in L-\lk_L(b)$. Following \cite{BDay}, we define the adjacency counter of $w$ relative to $c,d$ and $b$, written as $\langle c, d\rangle_{w, b}$, as the number of cyclic subsegments of $w$ of the form $\left(c u d^{-1}\right)^{ \pm 1}$, where $u$ is any (possibly empty) word in $\lk_L(b)$ (here, the elements $c$, $d$, $b$ are not necessarily distinct).

For the rest of the section we fix a finite set $W$ of cyclically graphically reduced words. We define the adjacency counter of $W$ relative to $c,d$ and $b$ as:
$$
\langle c, d\rangle_{W, b}=\sum_{w\in W}\langle c, d\rangle_{w, b}
$$
For $B, C \subset L$, we define:
$$
\langle B, C\rangle_{W, b}=\sum_{c \in\left(B- \lk_L(b)\right)} \sum_{d \in\left(C- \lk_L(b)\right)}\langle c, d\rangle_{W, b}.
$$
\end{defi} 

We begin by stating a result of \cite{BDay} with our notation.

\begin{lem}[\cite{BDay} Lemma 3.16]\label{lem:Day} Let $\beta=C^b_B$ be a partial conjugation with $b\in L=V_\G^{\pm}$ and $B$ a union of connected components of $\Gamma-\st_\G(b)$. Then
\begin{equation}
    \red_W(\beta)=\langle b, B^\pm\rangle_{W, b} -\langle B^\pm, L-B^\pm-\{b\}\rangle_{W, b} \label{eq:lemIgPrev}
\end{equation}
\end{lem}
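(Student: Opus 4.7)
The plan is to compute $\red_W(\beta) = \sum_{w\in W}(|w|_{V_\G}-|\beta^{-1}(w)|_{V_\G})$ directly by analysing what happens to a cyclically graphically reduced representative $w$ when one applies $\beta^{-1}=C^{b^{-1}}_B$, which sends $u\mapsto b^{-1}ub$ for $u\in B^\pm$ and fixes everything else. After substitution and the free reduction of the $b\cdot b^{-1}$ pairs internal to each maximal run of consecutive $B^\pm$-letters, the resulting word has length $|w|_{V_\G}+2m_w$, where $m_w$ is the number of maximal $B^\pm$-blocks of $w$. Extra graphical reductions then occur at the block boundaries, where an inserted $b^{\mp 1}$ can slide across a run of $\lk_L(b)$-letters and annihilate a compatible $b^{\pm 1}$.

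For each of the $2m_w$ inserted letters, I would determine its fate by examining the first letter of $w$ it encounters, past any run of $\lk_L(b)$-letters, as it moves outward from its block. There are three mutually exclusive outcomes: (A) the first such letter lies in $B^\pm$, i.e.\ it is the nearest letter of an adjacent block, in which case the two inserted letters flanking the link run meet and cancel; (B) the first such letter is $b^{\mp 1}$ in $w$ (with the sign matching the inserted letter), in which case the inserted letter cancels with it; (C) any other scenario, in which the inserted letter survives. By confluence of graphical reduction in RAAGs, these counts are well defined: writing $a_w,b_w,c_w$ for the numbers of the three outcomes (so that $2a_w+b_w+c_w=2m_w$), one obtains $|\beta^{-1}(w)|_{V_\G}=|w|_{V_\G}+c_w-b_w$, so the contribution of $w$ to $\red_W(\beta)$ is $b_w-c_w$.

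It remains to match $\sum_w b_w$ with $\langle b,B^\pm\rangle_{W,b}$ and $\sum_w c_w$ with $\langle B^\pm,L-B^\pm-\{b\}\rangle_{W,b}$. A type-B cancellation at a block boundary corresponds bijectively to a cyclic subsegment of $w$ of the form $b\cdot u\cdot d$ or $d'\cdot u\cdot b^{-1}$, with $u\in \lk_L(b)^*$ and $d,d'\in B^\pm$, each recognised by exactly one of the $(bud^{-1})^{\pm 1}$ patterns counted in $\langle b,B^\pm\rangle_{w,b}$; a type-C non-cancellation similarly corresponds to a subsegment in which the first letter past the link run lies in $L-B^\pm-\{b\}-\lk_L(b)$, counted once in $\langle B^\pm,L-B^\pm-\{b\}\rangle_{w,b}$. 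The main obstacle is precisely this final bookkeeping step: one must verify that each local cancellation or obstruction is captured by a unique $(cud^{-1})^{\pm 1}$ pattern, that the $\pm 1$ convention in the definition does not cause double counting across the two counters, and that the asymmetric exclusion of $\{b\}$ rather than $\{b^{-1}\}$ in the second counter, which reflects the fact that two $b$'s do not cancel while a $b$ and a $b^{-1}$ do, gives exactly the right correction. Once these identifications are in place, summing over $W$ yields the claimed formula.
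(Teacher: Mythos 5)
Your approach is genuinely different from the paper's. In the paper, Lemma~\ref{lem:Day} is explicitly a citation: the entire ``proof'' consists of observing that Day writes the partial conjugation by $b$ as $c\mapsto b^{-1}cb$ (our $\beta^{-1}$), so \cite[Lemma~3.16]{BDay} states
$\sum_{g\in W}|\beta^{-1}(g)|_{V_\G}-\sum_{g\in W}|g|_{V_\G}=\langle B^\pm, L-B^\pm-\{b\}\rangle_{W, b}-\langle b, B^\pm\rangle_{W, b}$,
and since the left-hand side equals $-\red_W(\beta)$ the claimed formula follows. What you are doing is re-deriving Day's Lemma~3.16 itself by a hands-on analysis of what happens to a cyclically graphically reduced word under $\beta^{-1}$. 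That is a legitimate and instructive exercise, and your block-boundary picture and the identity $2a_w+b_w+c_w=2m_w$, $|\beta^{-1}(w)|_{V_\G}=|w|_{V_\G}+c_w-b_w$ are consistent with the target formula (I checked them on small examples). But it is substantially more work than the paper actually does.

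As a proof, though, your sketch has real gaps — and you flag the main one yourself: the final bookkeeping identifying $\sum_w b_w$ with $\langle b,B^\pm\rangle_{W,b}$ and $\sum_w c_w$ with $\langle B^\pm,L-B^\pm-\{b\}\rangle_{W,b}$ is not carried out. Beyond that, there is a more fundamental issue with the phrase ``by confluence of graphical reduction in RAAGs, these counts are well defined.'' Confluence gives uniqueness of the graphically reduced normal form (hence of $|\beta^{-1}(w)|_{V_\G}$), but it does not by itself give a well-defined ``fate'' for each inserted $b^{\pm1}$ letter, since different reduction orders can pair up letters differently. To make the (A)/(B)/(C) trichotomy rigorous one needs either a canonical reduction order, or a heap/piling-style argument, or an argument that no cascade of secondary cancellations occurs after a boundary reduction. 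One also needs to handle the degenerate cyclic cases (a single block filling the whole word; two inserted letters flanking the same link run meeting each other around the cycle). None of these are obviously false, and Day's original proof does handle them, but they are exactly the technicalities that justify the paper's choice to cite rather than reprove. As written, your argument is a reasonable plan for re-deriving Day's lemma, not a complete proof, and it overlooks that the statement is a labelled citation for which the paper only needs the sign translation.
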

\begin{proof} Taking into account that Day denotes in \cite{BDay} the action of the partial conjugation by $b$ as $c\mapsto b^{-1}cb$, which corresponds to $\beta^{-1}$ with our notation, Lemma 3.16 of \cite{BDay} can be written as
$$\sum_{g\in W}|\beta^{-1}(g)|_{V_\G}-\sum_{g\in W}|g|_{V_\G}=
\langle B^\pm, L-B^\pm-\{b\}\rangle_{W, b}-\langle b, B^\pm\rangle_{W, b}$$
As
$$\sum_{g\in W}|\beta^{-1}(g)|_{V_\G}-\sum_{g\in W}|g|_{V_\G}=-\red_W(\beta)$$
we get the formula in the statement.
\end{proof}

 The next result is a straightforward consequence of the previous lemma and the properties of adjacency counters proved in \cite{BDay}:
\begin{lem}[\cite{BDay} Lemma 3.17]\label{lem:Day2}
    Let $\beta=C^b_B$ be a partial conjugation with $b\in L=V_\G^{\pm}$ and $B$ a union of connected components of $\Gamma-\st_\G(b)$. Then
\begin{equation}
    \red_W(\beta)=\langle b, L\rangle_{W, b} -\langle B^\pm\cup\{b\}, L-B^\pm-\{b\}\rangle_{W, b} .
\end{equation}
\end{lem}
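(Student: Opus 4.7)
The plan is to derive Lemma \ref{lem:Day2} directly from Lemma \ref{lem:Day} by a short additivity computation on the adjacency counters, together with one key cancellation that comes from the assumption that the words in $W$ are cyclically graphically reduced.

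First I would record the basic additivity property of the bracket: since $b \notin B^\pm$ (because $B$ is a union of components of $\Gamma - \st_\G(b)$), the set $L - \{b\}$ decomposes as the disjoint union $B^\pm \sqcup (L - B^\pm - \{b\})$, hence
\begin{equation*}
\langle b, B^\pm\rangle_{W,b} + \langle b, L - B^\pm - \{b\}\rangle_{W,b} = \langle b, L - \{b\}\rangle_{W,b},
\end{equation*}
and similarly
\begin{equation*}
\langle B^\pm \cup \{b\}, L - B^\pm - \{b\}\rangle_{W,b} = \langle B^\pm, L - B^\pm - \{b\}\rangle_{W,b} + \langle b, L - B^\pm - \{b\}\rangle_{W,b}.
\end{equation*}

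Next I would handle the single cancellation that matches the left-hand sides with what is written in the statement. The term $\langle b, \{b\}\rangle_{W,b}$ counts, inside any $w \in W$, cyclic subsegments of the form $(b\,u\,b^{-1})^{\pm 1}$ with $u$ a (possibly empty) word in $\lk_L(b)$. Since $\lk_L(b) \subseteq \st_\G(b)^\pm$, such a subsegment is exactly of the forbidden shape in the definition of graphically reduced word; as the elements of $W$ are assumed cyclically graphically reduced, each such subsegment must be trivial. Therefore $\langle b, \{b\}\rangle_{W,b} = 0$, which gives $\langle b, L\rangle_{W,b} = \langle b, L - \{b\}\rangle_{W,b}$.

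Combining these pieces, Lemma \ref{lem:Day} yields
\begin{align*}
\red_W(\beta) &= \langle b, B^\pm\rangle_{W,b} - \langle B^\pm, L - B^\pm - \{b\}\rangle_{W,b}\\
&= \bigl(\langle b, L - \{b\}\rangle_{W,b} - \langle b, L - B^\pm - \{b\}\rangle_{W,b}\bigr) - \langle B^\pm, L - B^\pm - \{b\}\rangle_{W,b}\\
&= \langle b, L\rangle_{W,b} - \langle B^\pm \cup \{b\}, L - B^\pm - \{b\}\rangle_{W,b},
\end{align*}
which is the claimed identity. The only substantive ingredient is the vanishing $\langle b, \{b\}\rangle_{W,b} = 0$, which is the reason the hypothesis that $W$ consists of graphically reduced cyclic words enters; the rest is bookkeeping by the additivity of the bracket over disjoint unions. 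I do not anticipate any real obstacle beyond making sure the disjointness $b \notin B^\pm$ and $b \notin \lk_L(b)$ are used correctly.
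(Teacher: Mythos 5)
Your derivation is correct, and it matches what the paper intends: the paper declares this lemma a ``straightforward consequence'' of Lemma~\ref{lem:Day} and the properties of adjacency counters from Day's paper, and what you have written is exactly that bookkeeping made explicit. In particular, the vanishing $\langle b,\{b\}\rangle_{W,b}=0$ because $W$ is cyclically graphically reduced is the right single nontrivial observation, and the additivity over the disjoint decompositions of $L-\{b\}$ and $B^\pm\cup\{b\}$ closes the gap.
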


Next, we prove a few technical Lemmas that will be needed later.

 \begin{lem}\label{lem:previoPowers}
    Let $\alpha=C^a_A$ be a partial conjugation with  $a\in L=V_\G^{\pm}$ and $A$ a union of connected components of $\Gamma-\st_\G(a)$. We have 
    \begin{itemize}
        \item[i)]if
    $
    \red_{\alpha^{-1}(W)}(\alpha)>0
    $
    then
    $
    \red_W(\alpha)>0
    $,
    \item[ii)] if
    $
    \red_{\alpha^{-1}(W)}(\alpha)\geq 0
    $
    then
    $
    \red_W(\alpha)\geq0
    $.
    \end{itemize}
    \end{lem}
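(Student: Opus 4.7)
My plan is to reduce both items to the single inequality
\begin{equation}\label{eq:keyineq}
\red_W(\alpha) \;\geq\; \red_{\alpha^{-1}(W)}(\alpha).\tag{$\star$}
\end{equation}
Indeed, if $(\star)$ holds, then $\red_{\alpha^{-1}W}(\alpha) > 0$ forces $\red_W(\alpha) \geq \red_{\alpha^{-1}W}(\alpha) > 0$, proving (i), and the non-strict case (ii) is identical. So the whole lemma becomes a single convexity-type statement.

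Setting $F(n) := \sum_{g\in W}|\alpha^{-n}(g)|_{V_\G}$ (which makes sense because, as noted just before Lemma \ref{lem:Day}, $|g|_{\alpha(V_\G)} = |\alpha^{-1}(g)|_{V_\G}$), the inequality $(\star)$ is exactly $F(0)-F(1) \geq F(1)-F(2)$, i.e., the discrete convexity of $F$ at $n=1$. I would therefore prove the stronger statement that $n\mapsto F(n)$ is convex on $\mathbb Z$, which (by additivity over $W$) follows once we know that $n\mapsto |\alpha^{-n}(g)|_{V_\G}$ is convex for each graphically reduced cyclic word $g$.

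To prove convexity for a fixed $g$, I would decompose the cyclic word $g$ into an alternation of maximal $A^\pm$-blocks and complementary non-$A^\pm$-blocks. Because $\alpha=C^a_A$ fixes every letter outside $A^\pm$ and sends each letter $c\in A^\pm$ to $aca^{-1}$, the pre-reduction form of $\alpha^{-n}(g)$ is obtained by replacing each maximal $A^\pm$-block $R$ by $a^{-n}Ra^n$. After graphical reduction, the total length splits into a sum of local contributions, one per boundary between an $A^\pm$-block and a non-$A^\pm$-block. At each such boundary the inserted $a^{\pm n}$-string either survives intact (giving a linear contribution $n+\mathrm{const}$) or partially cancels with a leading $a^{\mp k}$ in the adjacent block (giving a V-shaped contribution $|n-k|+\mathrm{const}$); in every case it is a convex piecewise-linear function of $n$, so summing yields convexity of $|\alpha^{-n}(g)|_{V_\G}$.

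The main obstacle is that a non-$A^\pm$-block whose letters all commute with $a$ (i.e.\ consists of $\lk_L(a)$-letters together with $a^{\pm 1}$) allows the $a^{\pm n}$-strings inserted at its two ends to interact through commutation, so the per-boundary decomposition is not literally independent. To handle this cleanly I would bypass the word-by-word bookkeeping and work directly with the adjacency-counter formulation of Day: using Lemma \ref{lem:Day2} to rewrite $F(0)-F(1)$ and $F(1)-F(2)$ as
$$F(n)-F(n+1)=\langle a,L\rangle_{\alpha^{-n}W,a}-\langle A^\pm\cup\{a\},L-A^\pm-\{a\}\rangle_{\alpha^{-n}W,a},$$
it suffices to check that the counter $\langle c,d\rangle_{w,a}$ is invariant under $w\mapsto \alpha^{-1}(w)$ whenever $c,d\notin A^\pm$ (since in that case the relevant subsegment $cud^{-1}$ with $u\in\lk_L(a)^\ast$ is untouched by the insertion of $a^{\pm 1}$'s around $A^\pm$-letters elsewhere), while the counters involving letters of $A^\pm$ move in the right direction. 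Plugging these into Day's formula yields $(\star)$ and hence both (i) and (ii).
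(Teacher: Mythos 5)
Your reduction of both items to the single convexity inequality $(\star)$: $\red_W(\alpha)\geq\red_{\alpha^{-1}(W)}(\alpha)$ is a clean and correct reformulation, and it captures exactly what the paper's proof establishes --- the paper proves the two monotonicity inequalities
$\langle a,L\rangle_{W,a}\geq\langle a,L\rangle_{\alpha^{-1}(W),a}$ and
$\langle A^\pm\cup\{a\},L-A^\pm-\{a\}\rangle_{\alpha^{-1}(W),a}\geq\langle A^\pm\cup\{a\},L-A^\pm-\{a\}\rangle_{W,a}$,
and subtracting Day's formula for $\red_W(\alpha)$ from that for $\red_{\alpha^{-1}(W)}(\alpha)$ shows that these two claims together are precisely $(\star)$. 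Your first, block-decomposition sketch is of a genuinely different flavour and you correctly flag the commutation obstacle, but you abandon it, so the weight is on the second sketch.

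The problem is that the central claim in your second sketch is false as stated. You assert that $\langle c,d\rangle_{w,a}$ is invariant under $w\mapsto\alpha^{-1}(w)$ whenever $c,d\notin A^\pm$, with the justification that such subsegments are ``untouched by the insertion of $a^{\pm1}$'s''. But $a,a^{-1}\notin A^\pm$, and the inserted $a^{\pm1}$'s are precisely what create new occurrences of subsegments having $a$ or $a^{-1}$ as a boundary letter. Concretely, take $\G$ edgeless with $V_\G=\{a,b,e\}$, $A=\{e\}$, and $w=bebe^{-1}$ (cyclically reduced). Then $\alpha^{-1}(w)=ba^{-1}eaba^{-1}e^{-1}a$ is cyclically reduced of length $8$, and one checks directly that $\langle a,b\rangle_{w,a}=0$ while $\langle a,b\rangle_{\alpha^{-1}(w),a}=2$, even though $a,b\notin A^\pm$. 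This is not a peripheral case: the term $\langle a,L\rangle_{\alpha^{-n}W,a}$ in your formula for $F(n)-F(n+1)$ sums over exactly such pairs $(a,d)$. You would at minimum need to restrict the invariance claim to $c,d\notin A^\pm\cup\{a,a^{-1}\}$, and then the remaining phrase ``the counters involving letters of $A^\pm$ move in the right direction'' is where essentially all the work lies: it covers both $\langle a,L\rangle$ and $\langle A^\pm\cup\{a\},L-A^\pm-\{a\}\rangle$, which move in \emph{opposite} directions, and nothing in the sketch explains why or how much. The paper's proof fills precisely this gap by a case-by-case tracking of how each counted subsegment of $\alpha^{-1}(w)$ relates to one of $w$ (and vice versa) under the map $\alpha$, split along whether the non-operative boundary letter lies in $A^\pm$ or not.
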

\begin{proof}
    Using Lemma \ref{lem:Day2}, we can rewrite the reductivities as:
    $$
\red_{\alpha^{-1}(W)}(\alpha)= \langle a, L\rangle_{\alpha^{-1}(W), a} -\langle A^\pm\cup\{a\}, L-A^\pm-\{a\}\rangle_{\alpha^{-1}(W), a}
    $$
    and
    $$
\red_{W}(\alpha)= \langle a, L\rangle_{W, a} -\langle A^\pm\cup\{a\}, L-A^\pm-\{a\}\rangle_{W, a}
    $$
    
    We claim that 
    $$\langle a, L\rangle_{W, a}\geq \langle a, L\rangle_{\alpha^{-1}(W), a}$$ 
    and 
    $$\langle A^\pm\cup\{a\}, L-A^\pm-\{a\}\rangle_{\alpha^{-1}(W), a}\geq\langle A^\pm\cup\{a\}, L-A^\pm-\{a\}\rangle_{W, a}.$$

    For the first inequality, let $(aul^{-1})^{\pm1}$ be a subsegment counted in  $\langle a, L\rangle_{\alpha^{-1}(W), a}$.
    When we apply $\alpha$ we get a subsegment of $W$ of one of the following two types:
    \begin{itemize}
        \item $(aul^{-1})^{\pm1}$, if $l\notin A$, which is counted in $\langle a, L\rangle_{W, a}$.
        \item $(aual^{-1}a^{-1})^{\pm1}$, if $l\in A$, which contains the sub-subsegment $al^{-1}$, which is counted $\langle a, L\rangle_{W, a}$.
    \end{itemize}

    For the second inequality, let $(xuy^{-1})^{\pm1}$ be a subsegment counted in $\langle A^\pm\cup\{a\}, L-A^\pm-\{a\}\rangle_{W, a}$.
    When we apply $\alpha^{-1}$ we get a subsegment of $\alpha^{-1}(W)$ of one of the following two types:
    \begin{itemize}
        \item  $(a^{-1}xauy^{-1})^{\pm1}$ if $x\neq a$, and we can find the sub-subsegment $auy^{-1}$ which is counted in $\langle A^\pm\cup\{a\}, L-A^\pm-\{a\}\rangle_{\alpha^{-1}(W), a}$.
        \item $(auy^{-1})^{\pm1}$, if $x=a$, which is counted in $\langle A^\pm\cup\{a\}, L-A^\pm-\{a\}\rangle_{\alpha^{-1}(W), a}$.
    \end{itemize}

    In case i), we obtain the next chain of inequalities
    $$
\langle a, L\rangle_{W, a}\geq \langle a, L\rangle_{\alpha^{-1}(W), a} > \langle A^\pm\cup\{a\}, L-A^\pm-\{a\}\rangle_{\alpha^{-1}(W), a}\geq\langle A^\pm\cup\{a\}, L-A^\pm-\{a\}\rangle_{W, a},
    $$
    which proves the first item. The proof of the second item is analogous.
\end{proof}

\begin{lem}\label{lem:powers} Let $\alpha=C^a_A$ be a partial conjugation with  $a\in L=V_\G^{\pm}$ and $A$ a union of connected components of $\Gamma-\st_\G(a)$. For any $k>0$ we have
\begin{itemize}
\item[i)] if $\red_W\alpha^k>0$, then $\red_W\alpha>0$,

\item[ii)] if $\red_W\alpha^k\geq 0$, then $\red_W\alpha\geq 0$.
\end{itemize}
\end{lem}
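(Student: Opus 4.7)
The plan is to reduce both items to Lemma~\ref{lem:previoPowers} via a telescoping identity. Since $\alpha$ commutes with itself, applying Lemma~\ref{lem:split}(iv) with $\beta=\alpha^{j-1}$ (so that $\beta\alpha\beta^{-1}=\alpha$) yields
$$\red_W(\alpha^j) \;=\; \red_W(\alpha^{j-1}) + \red_{\alpha^{-(j-1)}W}(\alpha).$$
Iterating for $j=k,k-1,\ldots,1$ gives the identity
$$\red_W(\alpha^k) \;=\; \sum_{i=0}^{k-1}\red_{\alpha^{-i}W}(\alpha).$$

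For item (i), if the left-hand side is strictly positive, then at least one summand $\red_{\alpha^{-i}W}(\alpha)$ is strictly positive. Applying Lemma~\ref{lem:previoPowers}(i) with the set playing the role of ``$W$'' in that lemma taken to be $\alpha^{-(i-1)}W$ (so that ``$\alpha^{-1}W$'' becomes $\alpha^{-i}W$) promotes $\red_{\alpha^{-i}W}(\alpha)>0$ to $\red_{\alpha^{-(i-1)}W}(\alpha)>0$. Iterating this step $i$ times transports positivity all the way down to $\red_W(\alpha)>0$. Item (ii) is proved identically using Lemma~\ref{lem:previoPowers}(ii), together with the observation that if a finite sum of integers is nonnegative, at least one summand is nonnegative.

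The one small subtlety is that the intermediate sets $\alpha^{-i}W$ need not be written as cyclically graphically reduced words. However, all reductivity quantities depend only on conjugacy classes through the cyclic word length, so at each stage we may tacitly replace each element by a cyclically graphically reduced representative of its conjugacy class, after which Lemma~\ref{lem:previoPowers} applies verbatim. The argument uses no property of $\alpha$ beyond its commutation with itself and the validity of Lemma~\ref{lem:previoPowers} for arbitrary $W$; the only real obstacle is keeping the bookkeeping for the iteration straight, which the telescoping identity above makes transparent.
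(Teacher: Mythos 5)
Your proof is correct and follows essentially the same route as the paper: both establish the telescoping identity $\red_W(\alpha^k)=\sum_{i=0}^{k-1}\red_{\alpha^{-i}W}(\alpha)$ via Lemma \ref{lem:split} (the paper uses parts iii) and i), you use part iv), which amounts to the same computation) and then transport positivity of the relevant summand down to $\red_W(\alpha)$ by iterating Lemma \ref{lem:previoPowers}. The remark about replacing $\alpha^{-i}W$ by cyclically graphically reduced representatives is a reasonable precaution that the paper leaves implicit, since the reductivities depend only on conjugacy classes.
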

\begin{proof} By Lemma \ref{lem:split} iii), we have
    $$
\red_W(\alpha^k)=\red_W(\alpha)+\red_W(\alpha(V_\G), \alpha^{k-1})=\red_W(\alpha)+\red_W(\alpha(V_\G), \alpha)+\cdots+\red_W(\alpha^{k-1}(V_\G), \alpha),
    $$
    which, using Lemma \ref{lem:split} i), can be rewritten as
    $$
\red_W(\alpha^k)=\red_W(\alpha)+\red_{\alpha^{-1}(W)}( \alpha)+\cdots+\red_{\alpha^{-k+1}(W)}( \alpha).
    $$
Since $\red_W(\alpha^k)>0$, we have that at least one of the addends must be positive. If it is $\red_W(\alpha)$ we are done. In other case, there is some $j>0$ with $\red_{\alpha^{-j}(W)}( \alpha)>0$. Now, let $W'=\alpha^{-j+1}(W)$, we have that $\alpha^{-j}(W)=\alpha^{-1}(W')$, so we can apply Lemma \ref{lem:previoPowers} i) and deduce $\red_{\alpha^{-j+1}(W)}( \alpha)>0$. We can repeat this procedure until we get $\red_W(\alpha)>0$.

To prove iii), we work in the same way, but using Lemma \ref{lem:previoPowers} ii).
\end{proof}

\begin{lem}\label{lem:newCZ} Let $\alpha=(C^a_D)^k,\beta=(C^b_D)^t$ where $C^a_D$ and $C^b_D$ are partial conjugations with $a,b\in L=V_\G^{\pm}$ and $D$ is a union of shared connected components of $\Gamma-\st_\G(a)$ and $\G-\st_\G(b)$ and $k,t>0$. Then 
\begin{itemize}
\item[i)] if $\red_W\alpha>0$, then $\red_W\beta<0$,

\item[ii)] if $\red_W\alpha\geq 0$, then $\red_W\beta\leq 0$.
\end{itemize}
\end{lem}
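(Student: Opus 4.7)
The plan is to reduce both items to the case $k=t=1$ using Lemma~\ref{lem:powers}, and then to establish the stronger inequality
$$\red_W(C^a_D) + \red_W(C^b_D) \leq 0,$$
from which both items follow (if $\red_W(C^a_D) > 0$ and the sum is $\leq 0$, then $\red_W(C^b_D) < 0$; analogously with weak signs for (ii)). The reduction uses Lemma~\ref{lem:powers}(i), giving $\red_W(C^a_D)^k > 0 \Rightarrow \red_W(C^a_D) > 0$, together with the contrapositive of Lemma~\ref{lem:powers}(ii), giving $\red_W(C^b_D) < 0 \Rightarrow \red_W(C^b_D)^t < 0$.

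For the core inequality, I would apply Lemma~\ref{lem:Day} to both reductivities and analyze the resulting adjacency-counter sum pair-by-pair over the cyclic words in $W$. The decisive structural input is that $D$ being shared forces $D\cap\st_\G(a)=D\cap\st_\G(b)=\emptyset$, so $a$ and $b$ form a SIL pair; in particular $a\notin\lk_L(b)$ and $b\notin\lk_L(a)$. A case analysis on the orientation and letter-type of each ordered position-pair $(l_i,l_j)$ involving exactly one letter in $D^\pm$ (other pairs contribute $0$) shows that every such pair contributes $\leq 0$, except for four exceptional ``good'' pair-shapes contributing $+1$: namely $(a,d)$ and $(d,a^{-1})$ with $d\in D^\pm$ that are $a$-adjacent but not $b$-adjacent, and the symmetric shapes $(b,d),(d,b^{-1})$ that are $b$-adjacent but not $a$-adjacent.

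The positive contributions are absorbed by an injective matching. Given a good pair $(a,d)$ that is $a$-adjacent but not $b$-adjacent, some intermediate letter must lie in $\lk_L(a)\setminus\lk_L(b)$; let $u$ be the one closest to $d$. Then every letter strictly between $u$ and $d$ lies in $\lk_L(a)\cap\lk_L(b)$, so $(u,d)$ is $b$-adjacent; since $u\in\lk_L(a)$ it contributes $0$ to $\red_W(C^a_D)$, while $u\in L-D^\pm-\{b\}-\lk_L(b)$ forces a $-1$ contribution to $\red_W(C^b_D)$, cancelling the original $+1$. The remaining good-pair shapes are matched symmetrically, either via a letter in $\lk_L(b)\setminus\lk_L(a)$ or by the mirror construction on the opposite cyclic side of $d$. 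The main obstacle is injectivity: because $a\notin\lk_L(a)$, on each cyclic side of a fixed $d$ at most one $a^\pm$-letter can be $a$-adjacent to $d$ (a second one would be blocked by an intervening $a^\pm$); the matches produced via $\lk_L(a)\setminus\lk_L(b)$ have position-pairs disjoint from those produced via $\lk_L(b)\setminus\lk_L(a)$; and left-side matches sit to one side of $d$ while right-side matches sit to the other. Combined with the pairwise analysis, this yields the desired non-positive sum and hence both items of the lemma.
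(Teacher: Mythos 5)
Your argument is correct and it reaches the lemma through the same underlying mechanism that the paper uses: reduce to $k=t=1$ via Lemma~\ref{lem:powers}, then exploit the fact that $D$ being a union of shared components forces $D\cap\st_\Gamma(a)=D\cap\st_\Gamma(b)=\emptyset$ and $a\notin\st_\Gamma(b)$, and match ``bad'' adjacency-counter contributions to ``good'' ones by truncating a segment at the first letter that leaves the relevant link. Where you diverge is in the bookkeeping. The paper proves the two one-sided injection inequalities
\[
\langle D^\pm, L-D^\pm-\{a\}\rangle_{W, a}\ \geq\ \langle b, D^\pm\rangle_{W, b}
\quad\text{and}\quad
\langle D^\pm, L-D^\pm-\{b\}\rangle_{W, b}\ \geq\ \langle a, D^\pm\rangle_{W, a},
\]
by sending a segment $(bud^{-1})^{\pm1}$ counted in $\langle b,D^\pm\rangle_{W,b}$ to the sub-segment starting at $d$ and stopping at the first letter outside $\lk_L(a)$ (which lands in $L-D^\pm-\st_L(a)$ because $\lk_L(b)$, $a^{\pm1}$ and $D^\pm$ are pairwise disjoint), and then chains these with the hypothesis $\red_W(\alpha)>0$. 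You instead aim directly at the unconditional inequality $\red_W(C^a_D)+\red_W(C^b_D)\leq0$, decomposing the sum over ordered position pairs and injecting the four net-$+1$ shapes into net-$\leq-1$ shapes. Adding the paper's two inequalities recovers exactly your sum bound, so the approaches are logically equivalent, but your version carries more overhead: you must enumerate all contributing pair shapes, verify that only four yield $+1$, and argue injectivity across four disjoint source types, whereas the paper's per-inequality injection needs only the single observation that at most one $b^{\pm}$-position can be $b$-adjacent to a fixed $D^\pm$-position on a given cyclic side. On the other hand your explicit truncation step (``let $u$ be the non-$\lk_L(b)$ letter closest to $d$'') is more careful than the paper's write-up, which reads as if the full segment $du^{-1}b^{-1}$ (whose middle is only known to lie in $\lk_L(b)$) were directly counted by an $\lk_L(a)$-adjacency counter; that gap is small and closes exactly the way you close it. If you want to shorten your proof, proving the two displayed inequalities separately and adding them would eliminate most of the case analysis and all of the four-way injectivity discussion.
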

\begin{proof} Assume first $k=t=1$. Using Lemma \ref{lem:Day}, to prove i) we have to check that $$\langle a, D^\pm\rangle_{W, a} -\langle D^\pm, L-D^\pm-\{a\}\rangle_{W, a} >0$$ implies $$\langle b, D^\pm\rangle_{W, b} -\langle D^\pm, L-D^\pm-\{b\}\rangle_{W, b} <0 .$$ 
    We will prove that $\langle D^\pm, L-D^\pm-\{a\}\rangle_{W, a}\geq \langle b, D^\pm\rangle_{W, b} $, which, by symmetry, implies that $\langle D^\pm, L-D^\pm-\{b\}\rangle_{W, b}\geq \langle a, D^\pm\rangle_{W, a}.$ 

    Let $(bud^{-1})^{\pm1}$ be a subsegment counted in $\langle b, D^\pm\rangle_{W, b}$, so $u$ is a (possibly empty) word in $\lk_L(b)$ and $d\in D^{\pm 1}$. We can rewrite it as $(du^{-1}b^{-1})^{\mp1}$. Since $\st_L(b)\cap (D^{\pm 1}\cup \{a\})=\emptyset$, we have that $u^{-1}b^{-1}$ is a word in $ L-D^\pm-\{a\}$, in consequence $(du^{-1}b^{-1})^{\mp1}$ is counted in $\langle D^\pm, L-D^\pm-\{a\}\rangle_{W, a}$ and we have that the inequality holds.
    So we get:
    $$
\langle D^\pm, L-D^\pm-\{b\}\rangle_{W, b}\geq \langle a, D^\pm\rangle_{W, a} > \langle D^\pm, L-D^\pm-\{a\}\rangle_{W, a}\geq \langle b, D^\pm\rangle_{W, b} 
    $$
    where the inequality in the middle comes from the assumption. Therefore
    $$
\langle b, D^\pm\rangle_{W, b}-\langle D^\pm, L-D^\pm-\{b\}\rangle_{W, b}<0
    $$
The proof of ii) is analogous.
    
Now, for the general case, put $\alpha_0=C^a_D$, $\beta_0=C^b_D$. If $\red_W(\alpha)>0$, Lemma \ref{lem:powers} implies $\red_W(\alpha_0)>0$ so $\red_W(\beta_0)<0$ and using Lemma \ref{lem:powers} again we deduce  $\red_W(\beta)<0$ so we have i). Item ii) is shown similarly.
\end{proof}

This last lemma can be seen as a variation of Lemma 3.8 of \cite{MM} that will be crucially used in the proof of Theorem B below. This variation is needed already in the case of the original McCullough-Miller complex $M_n$ because there is an issue with the statement of Lemma 3.8 in \cite{MM} as the following example shows.

\begin{ej}\label{ex:issue} Let $\G$ be the edgeless graph with four vertices labeled $x,y,b,c$ and consider $\alpha=(C^x_b)^2$ and $\sigma=C^y_b$. These automorphism have the based partitions $\{\{x\},\{b\},\{y,c\}\}$ and $\{\{y\},\{b\},\{x,c\}\}$ as full carriers. The associated automorphisms denoted $\alpha_0$ and $\beta_0$ in \cite[Lemma 3.8]{MM}, are precisely the identity so we have
$$\red_w(\alpha_0)=\red_w(\sigma_0)=0$$
for any cyclically reduced word $w$. Now, let $w=x^2bx^{-2}c$, which is cyclically reduced. We have $\alpha^{-1}(w)=x^2x^{-2}bx^2x^{-2}c=bc$ and $\beta^{-1}(w)=x^2y^{-1}byx^{-2}c$ so
$$\red_w(\alpha)=|w|_{V_\G}-|w|_{\alpha(V_\G)}=|w|_{V_\G}-|\alpha^{-1}(w)|_{V_\G}=6-2=4,$$
$$\red_w(\sigma)=|w|_{V_\G}-|w|_{\sigma(V_\G)}=|w|_{V_\G}-|\sigma^{-1}(w)|_{V_\G}=6-8=-2.$$
Therefore
$$\red_w(\alpha)+\red_w(\sigma)>0=\red_w(\alpha_0)+\red_w(\sigma_0)$$
contradicting \cite[Lemma 3.8]{MM}. In fact, \cite[Lemma 3.8]{MM} is true in the case when $\alpha$ and $\sigma$ have no multiple factors (this can be seen using the proof of Lemma \ref{lem:newCZ}), however for the general case only the weaker statement of Lemma \ref{lem:newCZ} holds.
\end{ej}

We will need three more technical results.

\begin{lem} \label{lem:facto}
    Let $\alpha=C^a_A,\beta=C^b_B$ be partial conjugations such that $a,b\in L=V_\G^{\pm}$ and $A$, $B$ are union of connected components of $\Gamma-\st_\G(a)$ and $\G-\st_\G(b)$ respectively. Assume that either $a\neq b$ and $a\in\st_L(b)$ or $A\cap B=A\cap\st_\G(b)=B\cap\st_\G(a)=\emptyset$. 
Then for any $k,t>0$
    $$
\red_W(\beta^k)=\red_{\alpha^t(W)}(\beta^k).
    $$

\end{lem}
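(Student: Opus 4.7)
The plan is to reduce the claim to a single-power statement via commutativity and telescoping, and then prove that single-power statement by analysing Day's adjacency counters.

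First I would observe that under either hypothesis, $[\alpha,\beta]=1$ \emph{exactly} in $\PAut(A_\G)$, as a direct consequence of Theorem \ref{teo:conmu}. In the first case ($a\neq b$ and $a\in\st_L(b)$, so $a\in\lk_\G(b)$), this is relation~i). In the second case, decompose $A=\bigsqcup_i A_i$ and $B=\bigsqcup_j B_j$ into connected components. The condition $A\cap\st_\G(b)=\emptyset$ forces every $A_i$ to be either shared or subordinate with respect to $b$ (it cannot be dominant, since the dominant component contains $b$), and symmetrically for $B_j$; then $A\cap B=\emptyset$ prevents a shared $A_i$ from coinciding with a shared $B_j$. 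Relation~ii) of Theorem \ref{teo:conmu} then yields $[C^a_{A_i},C^b_{B_j}]=1$ for all $i,j$, hence $[\alpha,\beta]=1$.

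Next I would apply Lemma \ref{lem:split}~iii) iteratively to get
\[
\red_W(\beta^k)=\sum_{j=0}^{k-1}\red_{\beta^{-j}W}(\beta),\qquad \red_{\alpha^t W}(\beta^k)=\sum_{j=0}^{k-1}\red_{\beta^{-j}\alpha^t W}(\beta).
\]
By the exact commutativity of $\alpha$ and $\beta$, the set $\beta^{-j}\alpha^t W$ coincides termwise with $\alpha^t\beta^{-j}W$, so it suffices to establish that $\red_U(\beta)=\red_{\alpha^t U}(\beta)$ for every finite set $U$ of cyclically graphically reduced words. Writing $\alpha^t U=\alpha(\alpha^{t-1}U)$, a simple induction on $t$ reduces this to the key identity
\[
\red_U(\beta)=\red_{\alpha U}(\beta).\qquad(\star)
\]

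To prove $(\star)$ I would invoke Lemma \ref{lem:Day} to write $\red_U(\beta)=\langle b,B^\pm\rangle_{U,b}-\langle B^\pm,L-B^\pm-\{b\}\rangle_{U,b}$, and then show that applying $\alpha=C^a_A$ to each cyclic word $v\in U$ preserves both adjacency counters. The analysis proceeds through the \emph{$b$-exterior subsequence} of $v$ (the subsequence of letters outside $\lk_L(b)$). In the first case, $a\in\lk_L(b)$, so every letter $a^{\pm 1}$ inserted by $\alpha$ around an $A$-letter already lies in $\lk_L(b)$; thus the $b$-exterior subsequence, and hence both counters, are unchanged. In the second case, $A\cap\st_\G(b)=\emptyset$ gives $A^\pm\subseteq L-\lk_L(b)$, so every $A$-letter is itself $b$-exterior and each $A$-letter $c$ is replaced by a block $a\cdot c\cdot a^{-1}$ of three $b$-exterior letters. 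Since $a\neq b$, $A\cap B=\emptyset$ and $B\cap\st_\G(a)=\emptyset$, we have $a^{\pm 1}\notin B^\pm$ and $a\neq b$, so $a^{\pm 1}\in L-B^\pm-\{b\}$. Each consecutive $b$-exterior pair of the form $(B^\pm\text{-letter},A\text{-letter})$ in $v$ is therefore replaced by the pair $(B^\pm\text{-letter},a^{\pm 1})$ in $\alpha(v)$, still counted in $\langle B^\pm,L-B^\pm-\{b\}\rangle$; the newly created internal pairs within $a\cdot c\cdot a^{-1}$ involve no $B^\pm$-letter. The counter $\langle b,B^\pm\rangle$ is equally preserved because neither $b$ nor $B^\pm$-letters are affected by $\alpha$, and the $\lk_L(b)$-letters separating them are not in $A$ (they lie in $\lk_L(b)$, which is disjoint from $A^\pm$).

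The main technical obstacle is the bookkeeping in the second case: one must verify that the cyclic reductions which may occur in $\alpha(v)$ (from consecutive $A$-letters, where the middle $a^{-1}a$ cancels, or from $\lk_L(a)$-letters allowing $a^{\pm 1}$ to commute past) do not disrupt the argument. These reductions only remove or relocate inserted $a^{\pm 1}$-letters among themselves, leaving every $A$-letter, $B^\pm$-letter, and $b$-letter fixed in its cyclic position, so the cyclic pattern of ``landmarks'' relevant to both counters is preserved up to the harmless substitution $A^\pm\leftrightarrow a^{\pm 1}$.
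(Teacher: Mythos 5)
Your argument follows essentially the same route as the paper: both reduce the problem via commutativity of $\alpha$ and $\beta$, and both prove the crucial one-step identity $\red_U(\beta)=\red_{\alpha U}(\beta)$ by applying Day's adjacency-counter formula (Lemma \ref{lem:Day}) and checking that $\alpha$ preserves both counters. The structural difference is that you observe that $\alpha$ and $\beta$ commute \emph{exactly} in $\PAut(A_\G)$ (a nice observation which does follow from Theorem \ref{teo:conmu}, though your case analysis omits the trivial subcases of the second hypothesis where $a=b$ or $a\in\lk_\G(b)$) and then do one telescoping in $j$ and one induction in $t$, whereas the paper first proves $k=t=1$ and then runs two separate inductions (in $k$, then in $t$) using only commutativity modulo inner automorphisms together with Lemma \ref{lem:split}. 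Your reduction is marginally cleaner, but it is the same idea.

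There is a genuine gap in the counter analysis for the second case. You write ``Since $a\neq b$'' and then conclude $a^{\pm 1}\in L-B^\pm-\{b\}$, but the second hypothesis of the lemma does \emph{not} give you $a\neq b$: the case $a=b$ (or $a=b^{-1}$ in $L$) with disjoint supports $A\cap B=\emptyset$ is allowed and is exactly the case invoked in the Factorization Lemma when $a_1=a_i$. When $a=b$ the inserted letters $a^{\pm 1}$ \emph{are} $b$-letters, so they do not lie in $L-B^\pm-\{b\}$, and moreover they can a priori create new $\langle b,B^\pm\rangle$-adjacencies next to a $B^\pm$-letter; the argument ``each $(B^\pm\text{-letter},A\text{-letter})$ pair is replaced by a $(B^\pm\text{-letter},a^{\pm 1})$ pair, still counted'' breaks down, and both counters must instead be shown to change by the same amount. (The paper's own proof asserts $a\in(L-B^\pm)-\{b\}$ at the corresponding step, so the omission is inherited, but a correct proof should address $a=b$ separately --- for instance by a direct analysis of how a $b$-landmark inserted before/after an $A$-letter shifts counts between the two counters symmetrically.) Your closing paragraph on cancellations from $a^{\pm 1}$ commuting past $\lk_L(a)$-letters is also rather informal; the claim that only $a^{\pm 1}$-letters are relocated is plausible but needs to be argued carefully against a fixed cyclically graphically reduced representative.
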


\begin{proof} Assume first that $k=t=1$. Using Lemma \ref{lem:Day}, we have
\begin{equation}
    \red_W(\beta)=\langle b, B^\pm\rangle_{W, b} -\langle B^\pm, (L-B^\pm)-\{b\}\rangle_{W, b} \label{eq:lemIgPrev}
\end{equation}
and also
\begin{equation}
    \red_{\alpha(W)}(\beta)=\langle b, B^\pm\rangle_{\alpha(W), b}-\langle B^\pm, (L-B^\pm)-\{b\}\rangle_{\alpha(W), b} \label{eq:lemIgPrev2}
\end{equation}

We claim  that 
\begin{equation}\label{eq:lemIg1}
   \langle B^\pm, (L-B^\pm)-\{b\}\rangle_{W, b}=\langle B^\pm, (L-B^\pm)-\{b\}\rangle_{\alpha(W), b}. 
\end{equation}
and that
\begin{equation}\label{eq:lemIg2}
   \langle b, B^\pm\rangle_{W, b} =\langle b, B^\pm\rangle_{\alpha(W), b},  
\end{equation}
obviously these two claims imply the result.

Let $\pi:\G\to\G-\lk_\G(b)$ be the map given by $v\mapsto v$ if $v\not\in\lk_\G(b)$ and 
$v\mapsto 1$ if $v\in\lk(b)$. We extend $\pi$ to $A_\G$ in the obvious way. Observe that any of the previous counters can be counted in the projections of $g$ and $\alpha(g)$ in the group $A_{\G-\lk_\G(b)}$. In particular this implies that in the case when $a\in\lk_\G(b)$, $\pi(w)=\pi(\alpha(w))$. In other words, applying $\alpha$ has no effect on the counters and we  have both claims (\ref{eq:lemIg1}) and (\ref{eq:lemIg2}). So from now on we assume  $A\cap B=\emptyset$. Moreover note that $A_{\G-\lk_\G(b)}$ is the free product of $A_B$, $A_{V_\G-B}$ and the cyclic group generated by $b$.

We begin with (\ref{eq:lemIg1}). The left hand side counter counts subsegments of the form $cd$ or $d^{-1}c$ with $c\in B^{\pm}$ and $d^{-1}\in (L-B^\pm)-\{b\}$. By symmetry, we can consider just the case $cd$. As $a\in (L-B^\pm)-\{b\}$, we see that $d$ could be equal to $a$, so when we apply $\alpha$ it might happen that $d$ gets canceled. But if $d\neq a$, we have that $\alpha(cd)$ can be either $cd$ or $ca^{-1}da$ and in both cases we get a subsegment counted by the right hand side counter.
Finally, if $d=a$ is canceled when we apply $\alpha$, then there must be some $s\in A^{\pm}$ which is the next letter in the expression for $g$, i.e., such that we have the subsegment $cas$. Then applying $\alpha$ we get $caa^{-1}sa=csa$ and as $s\in (L-B^\pm)-\{b\}$ we  have again a subsegment counted by the right hand side counter.
Therefore we have 

$$ \langle B^\pm, (L-B^\pm)-\{b\}\rangle_{W, b}\leq\langle B^\pm, (L-B^\pm)-\{b\}\rangle_{\alpha(W), b}$$
and applying the same argument for $\alpha^{-1}$ yields
$$\langle B^\pm, (L-B^\pm)-\{b\}\rangle_{\alpha(W), b}\leq\langle B^\pm, (L-B^\pm)-\{b\}\rangle_{\alpha^{-1}\alpha(W), b}$$
and we get (\ref{eq:lemIg1}). 

For (\ref{eq:lemIg2}), note that the left hand side counters counts subsegments of the form $bc$ or $cb^{-1}$ for $c\in B^{\pm}$. These subsegments remain invariant when we apply $\alpha$ and moreover there can be no cancellation, so we get the claim. 

Next, consider the case of an arbitrary $k>0$ and $t=1$. Then, items iv) of Lemma \ref{lem:split}  implies 
$$\red_{\alpha(W)}(\beta^k)=\red_{\alpha(W)}(\beta)+\red_{\beta^{-1}\alpha(W)}(\beta^{k-1})$$
As $\alpha$ and $\beta$ commute modulo an inner automorphism, using also induction on $k$ we get
$$\red_{\beta^{-1}\alpha(W)}(\beta^{k-1})=\red_{\alpha\beta^{-1}(W)}(\beta^{k-1})=\red_{\beta^{-1}(W)}(\beta^{k-1})$$
so the case $k=t=1$ together with a new application of item iv) of Lemma \ref{lem:split} yield
$$\red_{\alpha(W)}(\beta^k)=\red_{W}(\beta)+\red_{\beta^{-1}(W)}(\beta^{k-1})=\red_W(\beta^k).$$

Finally, we consider the general case when $k,t>0$ are arbitrary. Induction on $k+t$ yields for $W'=\alpha(W)$
$$\red_{\alpha^t(W)}(\beta^k)=\red_{\alpha^{t-1}(W')}(\beta^k)=\red_{W'}(\beta^k)=\red_{\alpha(W)}(\beta^k)=\red_W(\beta^k).$$

\end{proof}

\begin{lem}[The Factorization Lemma]
   
\label{lem:Facto}
 Let $X=\gamma(V_\G)$ be a basis of $A_\G$ and $\alpha\in\PAut(A_\G)$ with a factorization $\alpha=\alpha_m\cdots\alpha_2\alpha_1$ such that each $\alpha_i$ is carried by a marked based partition $(X,\ua^i_{a_i})$. Assume that  for each $i\neq j$, either $a_i\neq a_j$ and $\ua^i_{a_i}$, $\ua^j_{a_j}$ are compatible or $a_i=a_j$ and $\gamma\alpha_i\gamma^{-1}$ and $\gamma\alpha_j\gamma^{-1}$ have disjoint supports. Then
$$
\red_W(X,\alpha_m\cdots\alpha_2\alpha_1)=\sum_{i=1}^{m}\red_W(X,\alpha_i).
$$
\end{lem}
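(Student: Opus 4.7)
The plan is to reduce to the identity basis, decompose each $\alpha_i$ into individual partial conjugation powers, and then iterate Lemma~\ref{lem:facto}. First I would set $\beta_i = \gamma^{-1}\alpha_i\gamma$ (which is carried by the unmarked partition $\ua^i_{a_i}$) and $W_0 = \gamma^{-1}(W)$; Lemma~\ref{lem:split}~i) then reduces the identity to
$$\red_{W_0}(\beta_m\cdots\beta_1) = \sum_{i=1}^m \red_{W_0}(\beta_i).$$

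Next I would decompose each $\beta_i$ into a product of powers of single partial conjugations, $\beta_i = \prod_P (C^{a_i}_P)^{k_{P,i}}$, where $P$ runs over the non-operative petals of $\ua^i_{a_i}$; these factors commute literally by Theorem~\ref{teo:conmu}~i). Choosing any ordering inside each $\beta_i$ and concatenating in the order $i=m,m-1,\ldots,1$ expresses $\beta_m\cdots\beta_1$ as a single product $\zeta_N\cdots\zeta_1$ of powers of single partial conjugations. If I can prove additivity for this finer decomposition, i.e.\ $\red_{W_0}(\zeta_N\cdots\zeta_1)=\sum_s\red_{W_0}(\zeta_s)$, then the same argument applied inside each $\beta_i$ yields $\red_{W_0}(\beta_i)=\sum_P\red_{W_0}((C^{a_i}_P)^{k_{P,i}})$, and regrouping completes the proof.

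For the additivity I would induct on $N$. Writing $\rho=\zeta_N\cdots\zeta_2$ and applying Lemma~\ref{lem:split}~iv) gives
$$\red_{W_0}(\rho\zeta_1) = \red_{W_0}(\zeta_1) + \red_{\zeta_1^{-1}W_0}(\zeta_1\rho\zeta_1^{-1}).$$
The hypothesis ensures that $\zeta_1$ commutes with each $\zeta_s$ ($s\geq 2$) modulo an inner automorphism: two factors coming from a common $\beta_i$ commute literally (same operative factor, disjoint petals); two factors from different $\beta_i,\beta_j$ with $a_i=a_j$ commute literally by the disjoint-support assumption; and when $a_i\neq a_j$, Lemma~\ref{lem:compatible1} yields commutation modulo inner from the compatibility of $\ua^i_{a_i}$ and $\ua^j_{a_j}$. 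Hence $\zeta_1\rho\zeta_1^{-1}=\rho\cdot\iota$ for some $\iota\in\Inn(A_\G)$, and Lemma~\ref{lem:split}~ii) lets us drop $\iota$. By the inductive hypothesis $\red_{\zeta_1^{-1}W_0}(\rho)=\sum_{s\geq 2}\red_{\zeta_1^{-1}W_0}(\zeta_s)$, and finally Lemma~\ref{lem:facto} applied to each pair $(\zeta_1,\zeta_s)$ rewrites each summand as $\red_{W_0}(\zeta_s)$, completing the induction.

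The main obstacle will be checking that Lemma~\ref{lem:facto} really does apply to every pair $(\zeta_s,\zeta_t)$: its hypothesis asks for either $a\neq b$ with $a\in\st_L(b)$ or the intersection conditions $A\cap B=A\cap\st_\G(b)=B\cap\st_\G(a)=\emptyset$. When $a_s=a_t$ the intersection conditions hold automatically, since $A,B$ are disjoint (union of) connected components of $\G-\st_\G(a_s)$. When $a_s$ and $a_t$ are linked the first clause applies. When $a_s\neq a_t$ and they are not linked, Lemma~\ref{lem:compatible1} delivers the required disjointness conditions after multiplying $\zeta_s,\zeta_t$ by suitable inner automorphisms, and such adjustments are harmless by Lemma~\ref{lem:split}~ii). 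Once this case analysis is in place, the induction goes through without further difficulty.
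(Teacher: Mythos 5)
Your proposal is correct and follows essentially the same strategy as the paper's proof: reduce to the $V_\G$ basis via Lemma~\ref{lem:split}, split the product into powers of single partial conjugations (the paper does this at the start of its induction on $m$), peel off one factor with Lemma~\ref{lem:split} iii)/iv), use commutativity modulo inner automorphisms to permute and to drop inner factors, and conclude by invoking Lemma~\ref{lem:facto} after the same case split (equal operative factors with disjoint supports, linked, or unlinked via Lemma~\ref{lem:compatible1}). The only presentational difference is that you flatten to the list of $\zeta_s$'s up front and induct on its length, while the paper inducts on $m$; these are interchangeable, and your handling of the $\gamma$/$\gamma^{-1}$ conventions is in fact a bit more careful than the paper's.
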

\begin{proof} By further factoring the $\alpha_i$'s if necessary, we may assume that each $\gamma\alpha_j\gamma^{-1}$ is indeed a power of a partial conjugation. We argue by induction on $m$. The hypothesis implies that the $\alpha_i$'s commute modulo inner automorphisms and this property is kept after the possible splitting. Using iii) of Lemma \ref{lem:split} we have
$$
\red_W(X,\alpha_m\cdots\alpha_2\alpha_1)=\red_W(X,\alpha_1)+\red_W(\alpha_1(X),\alpha_m\cdots\alpha_2)
$$
As $\alpha_1(X)=\alpha_1\gamma(V_\G)$ and the $\alpha_i$'s commute modulo inner automorphisms, we see that each $\alpha_i$ is also carried by $(\alpha_1(X),\ua^i_{a_i})$ and then by induction we have
$$
\red_W(\alpha_1(X),\alpha_m\cdots\alpha_2)=\sum_{i=2}^{m}\red_W(\alpha_1(X),\alpha_i)$$
so we only have to check that for any $i$,
$$\red_W(\alpha_1(X),\alpha_i)=\red_W(X,\alpha_i).$$
Let $W'=\gamma(W)$, $\beta_1=\gamma\alpha_1\gamma^{-1}$ and $\beta_i=\gamma\alpha_i\gamma^{-1}$. We have 
$$\begin{aligned}
\red_W(X,\alpha_i)=\red_{W'}(\beta_i)\\
\red_W(\alpha_1(X),\alpha_i)=\red_{\beta_1^{-1}(W')}(\beta_i)
\end{aligned}$$
so we may assume that $X=V_\G$ and that $\alpha_1$, $\alpha_i$ are both powers partial conjugations carried by $\ua^1_{a_1}$, $\ua^i_{a_i}$ and we have to show
\begin{equation}\label{eq:Facto}\red_{\alpha_1^{-1}(W)}(\alpha_i)=\red_W(\alpha_i)\end{equation}

If $a_1=a_i$ then, as the supports are disjoint, Lemma \ref{lem:facto} implies (\ref{eq:Facto}). If $a_i\neq a_j$, then the based partitions $\ua^1_{a_1}$ and $\ua^i_{a_i}$ are compatible and Lemma \ref{lem:compatible1} implies that there are inner automorphisms $\nu_1$ and $\nu_i$ such that $\nu_1\alpha_1^{-1}$ and $\nu_i\alpha_i$ are powers of partial conjugations that satisfy the hypothesis of Lemma \ref{lem:facto} (we allow $\nu_1$ and/or $\nu_i$ to be the identity) so we have
$$\red_{\alpha^{-1}(W)}(\alpha_i)=\red_{\nu_1\alpha^{-1}(W)}(\nu_i\alpha_i)=\red_W(\nu_i\alpha_i)=\red_W(\alpha_i)$$
and we are done.

\end{proof}

The proof of our next Lemma also uses results of \cite{BDay}, we thank Matt Day for pointing out the argument.
\begin{lem}[The Existence Lemma]
\label{lem:existence}
   Let $W$ be a set of cyclically graphically reduced words such that for the nuclear vertex $u_0=[V_\G,\OO]$ the height $\|u_0\|_W$ is not minimal. Then there exists a 
   partial conjugation $\alpha$ which is strictly reductive at $W$, i.e., with $\red_W(\alpha)>0$.
\end{lem}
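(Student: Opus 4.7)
The plan is to leverage the hypothesis that some automorphism strictly reduces the cumulative cyclic length and extract from it a single partial conjugation with the same property. First I would observe that the assumption $\|u_0\|_W$ is not minimal provides a basis $X=\gamma(V_\G)$ with $\|[X,\OO]\|_W<\|u_0\|_W$, and hence some $\gamma\in\PAut(A_\G)$ with $\red_W(\gamma)>0$. By Laurence's theorem, $\gamma$ factors as a product $\gamma=\gamma_N\cdots\gamma_1$ of (powers of) partial conjugations.

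A first attempt would be the telescoping identity coming from iii) of Lemma \ref{lem:split},
\[
\red_W(\gamma)=\sum_{i=1}^{N}\red_{(\gamma_{i-1}\cdots\gamma_1)^{-1}(W)}(\gamma_i),
\]
which forces at least one summand to be positive and thus produces a partial conjugation strictly reductive for some \emph{transformed} word set. This partial conjugation need not be reductive at the original $W$, however, so the argument does not conclude by itself. In particular, one cannot simply pick the first factor $\gamma_1$, because the $W_i$'s drift as we peel factors off.

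To bridge the gap I would invoke the peak reduction machinery of Day \cite{BDay}, which was developed precisely to address this kind of Whitehead-style question for RAAGs. In the pure symmetric setting, Day's results imply that if some element of $\PAut(A_\G)$ strictly decreases the total cyclic length of a finite collection of cyclically graphically reduced words, then a single partial conjugation (or a power thereof) already does so. Applied to $\gamma$, this yields a partial conjugation $\alpha_0$ and an exponent $k>0$ with $\red_W(\alpha_0^k)>0$; Lemma \ref{lem:powers} then upgrades this to $\red_W(\alpha_0)>0$, so taking $\alpha=\alpha_0$ finishes the proof.

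The main obstacle is identifying and translating the correct statement from \cite{BDay} to our setting, since the conventions there (Whitehead automorphisms organised by orbits, the direction of conjugation, and the counter-based length formulas) differ from ours in several small but unavoidable ways. The technical material earlier in this section, in particular Lemmas \ref{lem:Day}, \ref{lem:Day2}, \ref{lem:previoPowers} and \ref{lem:powers}, has been set up precisely so that once the right peak-reduction input from \cite{BDay} is isolated, the translation through the adjacency-counter identities becomes routine. The explicit acknowledgment of Matt Day in the introduction strongly suggests that pinpointing the correct ingredient from \cite{BDay} is exactly the delicate step.
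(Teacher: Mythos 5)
You correctly identify that Day's peak reduction from \cite{BDay} is the engine for this lemma, and you correctly observe that the naive telescoping of $\red_W$ along a factorization fails because the reference word set drifts. However, there is a genuine gap in the way you appeal to \cite{BDay}: you assert that ``in the pure symmetric setting, Day's results imply that if some element of $\PAut(A_\G)$ strictly decreases the total cyclic length \ldots then a single partial conjugation (or a power thereof) already does so.'' This is not what Day's theorem gives. Theorem~B of \cite{BDay} peak-reduces elements of $\langle\Omega_l\rangle$ by factors drawn from $\Omega_l$, the set of \emph{all} long-range Whitehead automorphisms, not just partial conjugations. Even though the element $\beta^{-1}$ one wishes to peak-reduce lies in $\PAut(A_\G)$, the peak-reduced factorization need not stay inside $\PAut(A_\G)$, and in particular the first factor $\alpha_1$ --- the one that strictly decreases height --- could a priori be a Whitehead automorphism of the form $x\mapsto ax$ or $x\mapsto xa$, which is \emph{not} a partial conjugation and is not even pure symmetric.

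The paper closes this gap with a trick you do not anticipate: it replaces $W$ by $W'=W\cup\{b^k\mid b\in V_\G\}$ for a sufficiently large $k$. Since $\beta$ is symmetric, this augmentation shifts the heights of $u_0$ and of the minimal-height nucleus $u$ by the same constant $kn$, so the strict inequality $\|u_0\|_{W'}>\|u\|_{W'}$ persists and peak reduction still produces a first factor $\alpha_1$ that strictly reduces $\|\cdot\|_{W'}$. But now, if $\alpha_1$ were not a partial conjugation, it would send some generator $b$ to $ab$ or $ba$, inflating $|b^k|$ by at least $k$; choosing $k$ large enough makes this incompatible with $\alpha_1$ decreasing height. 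This forces $\alpha_1$ to be a partial conjugation, and the lemma follows with $\alpha=\alpha_1^{-1}$. Without this (or an equivalent device), your appeal to Day's machinery does not actually produce a partial conjugation, so the proof as proposed is incomplete. Your invocation of Lemma~\ref{lem:powers} is also somewhat misplaced: the real issue is not that peak reduction yields a \emph{power} of a partial conjugation, but that it may yield a long-range Whitehead automorphism that is not a conjugation at all.
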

\begin{proof}
    Let $u=[Y,\OO]$ be a nuclear vertex whose height $\|u\|_W$ with respect to $W$ is minimal so $\|u_0\|_W>\|u\|_W$ and take $\beta\in\PAut(A_\G)$ such that $\beta(V_\G)=Y$. For technical reasons that will be clear below, we need to work with the following modification of $W$:
    $$W'=W\cup\{b^k\mid b\in V_\G\}$$
    for some $k>0$. Then
    $$\|u_0\|_{W'}=\sum_{h\in W'}|h|_{V_\G}=\|u_0\|_W+kn$$
and the fact that $\beta$ is symmetric implies that also
    $$\|u\|_{W'}=\sum_{h\in W'}|h|_{Y}=\sum_{h\in W'}|h|_{\beta(V_\G)}=\sum_{h\in W'}|\beta^{-1}(h)|_{V_\G}=\|u\|_W+kn$$
    and we also have 
$$\|u\|_{W'}=\sum_{h\in\beta^{-1}(W')}|h|_{V_\G}.$$
    
    Theorem B of \cite{BDay} states that the automorphisms in $\langle\Omega_l\rangle$ can be peak reduced by elements in $\Omega_l$ respect to any set of cyclic words $W'$. Here, $\Omega_l$ is the set of long range Whitehead automorphisms $\varphi$ of $A_\G$: these are automorphisms for which there is an $a\in L=V_\G^{\pm}$ and $A\subseteq L$ so that $\varphi$ fixes all vertices adjacent to $a$ and such that for any $x\in A$, $\varphi(x)\in\{x^a, ax,xa,x\}$. In particular $\Omega_l\cap\PAut(A_\G)$ is the set of partial conjugations. Therefore $\beta^{-1}\in\langle\Omega_l\rangle$ so it can be peak reduced. This implies that there is a factorization 
    $$\beta^{-1}=\alpha_m\ldots\alpha_1$$
    with $\alpha_i\in\Omega_l$ such that the series 
    $$\sum_{h\in\alpha_i\ldots\alpha_1(W')}|h|_{V_\G}$$
    has a particular form: it can strictly decrease at the beginning, then might remain the same for a while and then strictly increase. The fact that for the first and last values we have 
    $$\|u_0\|_{W'}=\sum_{h\in W'}|h|_{V_\G}>\|u\|_{W'}=\sum_{h\in\alpha_m\ldots\alpha_1(W')}|h|_{V_\G}$$
    implies that there must be some strict decrease, and that must happen right at the beginning, i.e.
    $$\|u_0\|_{W'}=\sum_{h\in W'}|h|_{V_\G}>\sum_{h\in\alpha_1(W')}|h|_{V_\G}=\sum_{h\in W'}|\alpha_1(h)|_{V_\G}=\sum_{h\in W'}|h|_{\alpha_1^{-1}(V_\G)}=\|\alpha_1^{-1}(u_0)\|_{W'}.$$
    If $\alpha_1$ is not a partial conjugation, then for some $a\in L$ and $b\in V_\G$, $\alpha_1(b)$ is either $ab$ or $ba$ so 
    $$|\alpha_1(b^k)|_{V_\G}-|b^k|_{V_\G}\geq  k$$
    and this means that if we chose $k\geq\|v\|_{W'}$ we get a contradiction. Therefore $\alpha_1$ must be a partial conjugation and we have
    $$\red_W(\alpha_1^{-1})=\|u_0\|_W-\|\alpha^{-1}_1(u_0)\|_W=\|u_0\|_{W'}-\|\alpha_1(u_0)\|_{W'}>0$$
    so we can take $\alpha=\alpha_1^{-1}$.
\end{proof}

\section{Proof of Theorem B}\label{sec:contractibility}

 In this section we prove Theorem B, which asserts that $\MM_\G$ is contractible. The proof follows the strategy of \cite{MM} and proceeds as follows. The complex $\MM_\G$ is the union of the starts  of the nuclear vertices, so we can form $\MM_\G$ by adding inductively these stars and show contractibility by proving that at each step the intersection of a new star with the union of the stars previously accumulated is contractible. Thus most of the argument takes place in the star of a single nuclear vertex, which is isomorphic to the simplicial realization of the $\G$-Whitehead poset $\Wh_\G$. To analyze the intersection we will use Quillen's poset Lemmas, which are stated in Subsection \ref{subsec:posetlemmas}. The main argument is given in Subsection \ref{subsec:contractibility}.

\subsection{Poset lemmas}\label{subsec:posetlemmas} Throughout this Subsection, we fix posets $X$ and $Y$. We denote by $|X|$ and $|Y|$ their respective simplicial realizations.

\begin{defi}[Poset map]
    A function $f:X\to Y$ is a poset map if it is order-preserving. A poset map induces a simplicial map between the simplicial realizations which is denoted by $|f|:|X|\to |Y|$.
\end{defi}

\begin{defi}[Upper and lower star]
     For $x\in X$ (which can also be seen as a vertex in the realization $|X|$) the upper star of $x$ is the (partially ordered) subset $\st_+(x)=\{z\in X\ |\ z\geq x\}\subseteq X$. The lower star is defined to be $\st_-(x)=\{z\in X\ |\ z\leq x\}$. The simplicial realizations are denoted $|\st_+(x)|$ and $|\st_-(x)|$; these are contractible subcomplexes of the simplicial realization $|X|$.
\end{defi}

From Chapter 1.3 of \cite{Q} we have the following lemma.
\begin{lem}
    Let $f: X \rightarrow Y$ and $g: X \rightarrow Y$ be poset maps. If $f(x) \geq g(x)$ for all $x$ in $X$ then $|f|$ is homotopic to $|g|$ relative to the subcomplex of $|X|$ spanned by $\{x \in X \mid f(x)=g(x)\}$. In particular, for $f:X\to X$, if $f(x) \geq x$ for all $x$ (or $f(x) \leq x$ for all $x$), the image $|f(X)|$ is a weak deformation retract of $|X|$. If $f$ is also idempotent (i.e., $f^2(x)=f(x)$ for any $x\in X$), then $|f(X)|$ is a strong deformation retract of $|X|$. 
    \label{lem:posetmap}
\end{lem}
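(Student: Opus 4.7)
The plan is to realise the homotopy at the poset level using an auxiliary product poset. Set $[1]=\{0<1\}$ and give $X\times[1]$ the coordinatewise order. Define $H\colon X\times[1]\to Y$ by $H(x,0)=g(x)$ and $H(x,1)=f(x)$. To see that $H$ is a poset map, the only non-routine case is $(x_1,0)\le(x_2,1)$, where one needs $g(x_1)\le f(x_2)$; this follows from the chain $g(x_1)\le g(x_2)\le f(x_2)$, combining the monotonicity of $g$ with the hypothesis $g\le f$.

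Next, I would invoke the classical identification of the order complex $|X\times[1]|$ with the prism $|X|\times[0,1]$ equipped with its standard ``staircase'' triangulation. Concretely, a chain in $X\times[1]$ is determined by a chain $x_0\le\cdots\le x_k$ in $X$ together with the index at which the second coordinate jumps from $0$ to $1$, and these correspond bijectively to the simplices of the canonical triangulation of $|X|\times[0,1]$. Under this identification $|H|\colon|X|\times[0,1]\to|Y|$ restricts to $|g|$ on the bottom face and to $|f|$ on the top face, yielding the desired homotopy. If $f(x)=g(x)$ then $H(x,0)=H(x,1)$, so $|H|$ is stationary on each vertical fiber $\{x\}\times[0,1]$ and hence on the full subcomplex spanned by such vertices, giving the relative homotopy.

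The stated special cases then follow by taking $Y=X$ and $g=\mathrm{id}_X$ (the case $f\le\mathrm{id}$ being dual by reversing the order): the resulting homotopy $|\mathrm{id}_X|\simeq|f|$ exhibits $|f|\colon|X|\to|f(X)|$ as a homotopy inverse to the inclusion, so $|f(X)|$ is a weak deformation retract of $|X|$. If moreover $f^2=f$, then any $y=f(x)\in f(X)$ satisfies $f(y)=f^2(x)=f(x)=y$, so $f$ fixes $f(X)$ pointwise, and the stationary subcomplex in the homotopy above is then precisely $|f(X)|$; this upgrades the conclusion to a strong deformation retract. The main, indeed the only, non-formal point is the identification of $|X\times[1]|$ with the triangulated prism $|X|\times[0,1]$; this is a standard combinatorial fact about order complexes of product posets, and once granted the remainder of the argument is essentially a verification.
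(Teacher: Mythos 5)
The paper does not give a proof of this lemma; it simply cites Chapter 1.3 of Quillen's paper \cite{Q}. Your argument is exactly the standard one there: build the poset map $H\colon X\times[1]\to Y$ out of $f$ and $g$, use the homeomorphism $|X\times[1]|\cong|X|\times[0,1]$ from the staircase triangulation, and observe that the homotopy is constant on fibers over the fixed set. That part is correct, including the order-preservation check for $(x_1,0)\le(x_2,1)$ and the relative statement.

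One point in the ``weak deformation retract'' deduction is stated too quickly. The homotopy $\mathrm{id}_{|X|}\simeq|f|$ only gives $i\circ|f|\simeq\mathrm{id}_{|X|}$; to conclude that the inclusion $i\colon|f(X)|\hookrightarrow|X|$ is a homotopy equivalence you also need $|f|\circ i\simeq\mathrm{id}_{|f(X)|}$ \emph{as a homotopy inside} $|f(X)|$, and the given homotopy only supplies a homotopy inside $|X|$. The fix is immediate: since $f$ maps $f(X)$ into itself and $f(y)\ge y$ (resp.\ $\le y$) on $f(X)$, apply the first part of the lemma a second time to the restriction $f|_{f(X)}\colon f(X)\to f(X)$ versus $\mathrm{id}_{f(X)}$ to get $|f|\circ i\simeq\mathrm{id}_{|f(X)|}$ in $|f(X)|$. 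With that added sentence the proof is complete, and the idempotent case then upgrades it to a strong deformation retraction exactly as you say, since $f$ fixes $f(X)$ pointwise.
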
 

The next consequence is also a famous result by Quillen that will be very useful below.

\begin{lem}\label{lem:connically}
    Let $f: X \rightarrow X$ be a poset map. Assume that there is some $x_0\in X$ such that 
    \begin{itemize}
\item[i)] either $x_0\leq f(x) \geq x$ for all $x$,

\item[ii)] or $x_0\geq f(x) \leq x$ for all $x$.
\end{itemize}
Then $|X|$ is contractible.
\end{lem}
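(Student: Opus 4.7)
The plan is to apply Lemma~\ref{lem:posetmap} twice, comparing the given map $f$ both to the identity on $X$ and to the constant map at $x_0$. Since we have a common endpoint of two homotopies, we can conclude that the identity on $|X|$ is null-homotopic.

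I will first treat case~i), where $x_0 \leq f(x) \geq x$ for all $x \in X$. Consider three poset maps $X \to X$: the identity $\mathrm{id}_X$, the given map $f$, and the constant map $c_{x_0}$ sending every element to $x_0$ (which is a poset map trivially). The hypothesis $f(x) \geq x = \mathrm{id}_X(x)$ lets us apply Lemma~\ref{lem:posetmap} to conclude that $|f|$ is homotopic to $|\mathrm{id}_X|$. The hypothesis $f(x) \geq x_0 = c_{x_0}(x)$ lets us apply the same lemma to conclude that $|f|$ is homotopic to $|c_{x_0}|$. Composing these two homotopies shows that $|\mathrm{id}_X|$ is homotopic to the constant map $|c_{x_0}|$, so $|X|$ is contractible.

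For case~ii), where $x_0 \geq f(x) \leq x$ for all $x$, the argument is dual: now $f(x) \leq \mathrm{id}_X(x)$ yields via Lemma~\ref{lem:posetmap} that $|f|$ is homotopic to $|\mathrm{id}_X|$, and $f(x) \leq x_0 = c_{x_0}(x)$ yields that $|f|$ is homotopic to $|c_{x_0}|$. Again the identity on $|X|$ is homotopic to a constant map, giving contractibility.

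There is essentially no obstacle here; the only subtlety is to notice that the constant map $c_{x_0}$ is itself an order-preserving map (vacuously), so that Lemma~\ref{lem:posetmap} applies verbatim to the pair $(f, c_{x_0})$ as well as to the pair $(f, \mathrm{id}_X)$. The geometric content is the standard Quillen trick: a poset with an element that is comparable to $f(x)$ uniformly on one side, while $f(x)$ is comparable to $x$ uniformly on the other side, yields a conical contraction of $|X|$ passing through $|f(X)|$.
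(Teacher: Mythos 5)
Your proof is correct and takes exactly the route the paper has in mind: the paper presents this lemma as a ``consequence'' of Lemma~\ref{lem:posetmap} and cites Quillen without writing out the details, and your two applications of Lemma~\ref{lem:posetmap} (comparing $f$ with $\mathrm{id}_X$ and with the constant map $c_{x_0}$, then composing the homotopies) are precisely the standard argument for conical contractibility.
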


This is called to be  \textit{connically contractible} in \cite{Q}.
We will also use the following criterion for $|f|$ to be a homotopy equivalence; it appears as Proposition 1.6 of \cite{Q}.

\begin{lem}\label{lem:posetMap2}
    Let $f: X \rightarrow Y$ be a poset map. If the simplicial realization of 
    $$f^{-1}\left(s t_{-}(y)\right)=\{x\in X\mid f(x)\leq y\}$$ (respectively, of $f^{-1}\left(s t_{+}(y)\right)$  is contractible for each $y \in Y$, then $|f|$ is a homotopy equivalence. 
\end{lem}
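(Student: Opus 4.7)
The statement is Quillen's Theorem A specialized to poset maps (sometimes called Quillen's Fibre Lemma). The plan is to follow the classical proof via the mapping cylinder poset construction, combined with the preceding Lemma \ref{lem:posetmap}. I will treat only the $\st_-$ case; the $\st_+$ case follows by passing to opposite posets.

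First I would introduce the \emph{mapping cylinder poset} $Z$ on the disjoint union $X \sqcup Y$: keep the original orders on $X$ and $Y$, and declare $x \leq y$ (for $x \in X$, $y \in Y$) iff $f(x) \leq y$ in $Y$. Then one has inclusions $i_X \colon X \hookrightarrow Z$, $i_Y \colon Y \hookrightarrow Z$ and a retraction $r \colon Z \to Y$ given by $r|_Y = \mathrm{id}_Y$, $r|_X = f$. By construction $f = r \circ i_X$, so it suffices to show that both $|i_X|$ and $|r|$ are homotopy equivalences. For $|r|$: note $r \circ i_Y = \mathrm{id}_Y$, while $i_Y \circ r \geq \mathrm{id}_Z$ as poset endomaps of $Z$ (for $x \in X$, $(i_Y \circ r)(x) = f(x) \geq x$ in $Z$ by construction). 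Lemma \ref{lem:posetmap} then yields that $|r|$ and $|i_Y|$ are mutually inverse homotopy equivalences.

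Next I would verify the contractibility of lower preimages under $i_X$: for each $z \in Z$, the subposet $i_X^{-1}(\st_-^Z(z))$ has contractible realization. If $z = x \in X$ this is just $\st_-^X(x)$, whose realization is a cone. If $z = y \in Y$ it is exactly $\{x \in X \mid f(x) \leq y\} = f^{-1}(\st_-(y))$, contractible by hypothesis.

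The main obstacle is then the final step: deducing from the above that $|i_X|$ is a homotopy equivalence. This is the substantive content of Quillen's Theorem A in the poset setting. The standard argument proceeds by induction on a well-ordering of $Y$ extending its partial order: adjoin the elements of $Y$ to $X$ inside $Z$ one at a time; at each stage one glues in a new top element $y$ whose lower star has realization equal (in the partially built subposet) to a contractible subcomplex, by the paragraph above. Thus each stage is a homotopy equivalence (coning over a contractible subcomplex), and taking the (possibly transfinite) colimit gives that $|i_X|$ is a homotopy equivalence. Composing with Step~1 yields $|f| = |r| \circ |i_X|$ is a homotopy equivalence, completing the proof.
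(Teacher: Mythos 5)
The paper does not prove this lemma; it states it as Proposition 1.6 of Quillen's paper \cite{Q} and uses it as a black box, so there is no in-paper argument to compare against. Your sketch via the mapping-cylinder poset $Z = X \sqcup Y$ (with $x \leq y$ iff $f(x) \leq y$) is a standard route to the poset version of Quillen's Theorem A, and your first two steps are correct: $r \circ i_Y = \mathrm{id}_Y$ and $i_Y \circ r \geq \mathrm{id}_Z$, so Lemma \ref{lem:posetmap} gives that $|r|$ and $|i_Y|$ are mutually inverse homotopy equivalences, and your identification of $i_X^{-1}(\st_-^Z(z))$ for $z\in X$ and for $z\in Y$ is also right.

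There is, however, a genuine gap in the final step. In your transfinite induction the well-order $\prec$ on $Y$ refines the partial order, so when you adjoin $y$ to $X \cup Y_{\prec y}$ every $y' < y$ is already present. Consequently the lower star of $y$ in the partially built subposet is not $f^{-1}(\st_-(y))$ but the strictly larger poset $Z_{<y} := f^{-1}(\st_-(y)) \cup \{y' \in Y \mid y' < y\}$. You assert that this lower star is contractible ``by the paragraph above,'' but that paragraph only establishes contractibility of $f^{-1}(\st_-(y))$; the previously adjoined $Y$-elements are simply ignored. Contractibility of $|Z_{<y}|$ is true, but it is a separate claim needing its own nested transfinite induction on $y$: start from the contractible $|f^{-1}(\st_-(y))|$ and adjoin the elements $y' < y$ in the well-order, noting that the lower star of $y'$ at the moment of adjunction is exactly $Z_{<y'}$ (since $f(x)\leq y'$ already implies $f(x)\leq y$, and $y''<y'$ implies $y''\prec y'$ and $y''<y$), which is contractible by the inner induction hypothesis because $y' < y$. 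Without this secondary argument the coning step of your outer induction is unjustified; with it, the proof goes through.
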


\subsection{Proof of contractibility}\label{subsec:contractibility}

To add  the stars in the right order, we will use the notion of height of a nuclear vertex defined in Section \ref{sec:reductivity}. To do that, we fix the set $W_0=\left\{a_i a_j \mid i<j\right\}$ where the $a_i$'s are the vertices of $\G$, i.e. $V_\G=\{a_1,\ldots,a_n\}$.

\begin{lem}\label{lem:minred} For any nuclear vertex $u=[X,\OO]$ of $\MM_\Gamma$ we have  $\|u\|_{W_0} \geq n(n-1)$ and the equality holds if and only if $u=[V_\G,\OO]$.
\end{lem}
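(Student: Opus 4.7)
The plan is to proceed in four steps.

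First, I would compute $\|u_0\|_{W_0}$ directly at $u_0=[V_\G,\OO]$. For each pair $i<j$, the word $a_ia_j$ is graphically reduced of length $2$ in $V_\G^{\pm}$, and it cannot be cyclically equivalent to a shorter graphically reduced word because $a_ia_j$ is non-trivial and its image $a_i+a_j\in\Z^n$ in the abelianization is not of the form $\pm a_k$. Hence $|a_ia_j|_{V_\G}=2$ and $\|u_0\|_{W_0}=2\binom{n}{2}=n(n-1)$.

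For the general lower bound, I would write $X=\alpha(V_\G)$ with $\alpha\in\PAut(A_\G)$ and use the invariance $|g|_X=|\alpha^{-1}(g)|_{V_\G}$ (the lemma stated just before Definition~\ref{def:reductivity}) to rewrite $|a_ia_j|_X=|\alpha^{-1}(a_i)\alpha^{-1}(a_j)|_{V_\G}$. The element $\alpha^{-1}(a_i)\alpha^{-1}(a_j)$ is non-trivial and abelianizes to $a_i+a_j$ (since $\alpha^{-1}$ is pure symmetric), so the same abelianization argument forces cyclic length at least $2$; summing over all pairs yields $\|u\|_{W_0}\geq n(n-1)$.

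For the equality case, assume $\|u\|_{W_0}=n(n-1)$, so $|\alpha^{-1}(a_ia_j)|_{V_\G}=2$ for every $i<j$. A cyclically graphically reduced word of length $2$ in $V_\G^{\pm}$ whose abelianization is $a_i+a_j$ must consist of one positive occurrence each of $a_i$ and $a_j$, so up to cyclic rotation it is $a_ia_j$ or $a_ja_i$; both elements lie in the conjugacy class of $a_ia_j$ in $A_\G$. Therefore $\alpha^{-1}$ (and hence $\alpha$) preserves the conjugacy class of $a_ia_j$ for every pair $i\neq j$.

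The main obstacle is then to deduce from this last condition that $\alpha$ is inner, equivalently $[X,\OO]=[V_\G,\OO]$. I would write $\alpha(a_k)=g_ka_kg_k^{-1}$ with each $g_k$ chosen so that $g_ka_kg_k^{-1}$ is graphically reduced (no right-hand letter of $g_k$ commutes with $a_k$), and compose with an inner automorphism to arrange $g_1=1$. For each $k$ with $a_k\notin\st_\G(a_1)$, the element $a_1g_ka_kg_k^{-1}$ is conjugate to $a_1a_k$; conjugating by $g_k$ and cyclically reducing $a_1\, h\, a_k\, h^{-1}$ with $h=g_k$, together with the splitting of $h$ into a $\st_\G(a_k)$-factor on the right, forces the remaining part to centralize $a_1$, so $g_k$ may be taken inside $\langle\st_\G(a_1)\rangle$ and thus commutes with $a_1$. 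Iterating with other witness indices in place of $1$ and treating the case $a_k\in\st_\G(a_1)$ via a different witness, each $g_k$ is seen to centralize $a_k$, whence $\alpha$ is inner. The free-group case of this step is clean since $\langle\st_\G(a_k)\rangle=\langle a_k\rangle$ and comparing powers across different pairs immediately pins down a common conjugator; the RAAG version is subtler because it requires carefully analyzing how the cyclic reduction interacts with $\st_\G(a_i)\cap\st_\G(a_j)$, and this is where I expect most of the technical work to lie.
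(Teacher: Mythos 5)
Your strategy coincides with the paper's: write $\alpha(a_k)=h_ka_kh_k^{-1}$, argue each $|\alpha(a_i)\alpha(a_j)|_{V_\G}\geq 2$, and for the equality case force all $h_k$ to be trivial modulo a common conjugator. The abelianization observation for the lower bound is a cleaner alternative to the paper's direct count of cancellations, and the reformulation of the equality condition as ``$\alpha$ preserves the conjugacy class of each $a_ia_j$'' is a nice conceptual packaging. However, the step you yourself flag as the ``main obstacle'' -- deducing innerness from this preservation of conjugacy classes -- is precisely where the content of the lemma lives, and your proposal leaves it as a sketch, so there is a genuine gap.

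Here is how the paper closes it. Normalize $h_1=1$ by an inner automorphism, and choose each $h_j$ so that $h_ja_jh_j^{-1}$ is graphically reduced (no right-hand letter of $h_j$ lies in $\st_\G(a_j)$; this is the normalization your ``$\st_\G(a_k)$-factor on the right'' is gesturing at, but it is performed once at the outset, not extracted during the cancellation step). Then $|a_1\,h_ja_jh_j^{-1}|_{V_\G}=2$, and because $h_ja_jh_j^{-1}$ is already reduced, the only cancellations in the cyclic reduction occur between the letters of $h_j$ and $h_j^{-1}$ flanking $a_1$; peeling these off one at a time forces each letter of $h_j$ into $\st_\G(a_1)^\pm$, hence $h_j\in\langle\st_\G(a_1)\rangle$ for all $j\geq 2$. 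Now conjugate by $h_2$: since $h_2$ commutes with $a_1$ this does not disturb $\alpha(a_1)=a_1$, and it makes $h_2=1$. Running the same argument with witness $a_2$ yields $h_j\in\langle\st_\G(a_1)\rangle\cap\langle\st_\G(a_2)\rangle=\langle\st_\G(a_1)\cap\st_\G(a_2)\rangle$ for $j\geq 3$. Iterating through all the $a_i$ eventually kills every $h_j$, so $\alpha$ is inner and $u=[V_\G,\OO]$. Note that the case $a_k\in\st_\G(a_1)$, which you propose to handle via ``a different witness,'' needs no special treatment in this scheme: the cancellation argument showing $h_j\in\langle\st_\G(a_1)\rangle$ applies regardless of whether $a_j$ is adjacent to $a_1$, because one only tracks which letters of $h_j$ commute with $a_1$, never whether $a_j$ does.
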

\begin{proof} Take $\alpha\in\PAut(\A)$ such that $V_\G=\alpha(X)$. Then
$$\|u\|_{W_0}=\sum_{g\in W_0}|g|_{\alpha^{-1}(V_\G)}=\sum_{g\in W_0}|\alpha(g)|_{V_\G}.$$
As $\alpha\in\PAut(\A)$, we have $\alpha(a_i)=h_ia_ih_i^{-1}$ for some $h_1,\ldots,h_n\in\A$ and up to a possible conjugation we may assume $h_1=1$. We may also assume that the words $h_ia_ih_i^{-1}$ are (non cyclically) reduced. Counting possible cancellations it is obvious that $$|\alpha(a_i)\alpha(a_j)|_{V_\G}=|h_ia_ih_i^{-1}h_ja_jh_j^{-1}|_{V_\G}\geq 2$$ so the first affirmation holds. For the second, observe that if the equality holds, then
$$2=|\alpha(a_1)\alpha(a_j)|_{V_\G}=|a_1h_ja_jh_j^{-1}|_{V_\G}=|h_j^{-1}a_1h_ja_j|_{V_\G}.$$ 
Let $b\in V_\G^\pm$ be the last letter in a reduced expression of $h_j$ and put $h_j=h_j'b$ so 
$$|h_j^{-1}a_1h_ja_j|_{V_\G}=|b^{-1}(h'_j)^{-1}a_1h'_ja_j|_{V_\G}=2$$
and since there are no cancellations in $h_ja_jh_j^{-1}$, we deduce that either $b$ or $b^{-1}$ must be canceled and this can only happen if $b\in\st_\G(a_1)^\pm$. By induction on the length of $h_j$ we deduce that for $j=2,\ldots,n$, $h_j$ lies in the subgroup generated by $\st_\G(a_1)$. 
Then, up to conjugation by $h_2$ we may assume $h_2=1$ (note that $h_2a_1h_2^{-1}=a_1$). Now,
$$2=|\alpha(a_2)\alpha(a_j)|_{V_\G}=|a_2h_ja_jh_j^{-1}|_{V_\G}=|h_j^{-1}a_2h_ja_j|_{V_\G}$$
implies that for $j=3,\ldots,n$, $h_j$ lies in the subgroup generated by $\st_\G(a_1)\cap\st_\G(a_2)$. Repeating the argument, we deduce that eventually we can conjugate to get the identity automorphism, i.e., $\alpha$ is inner so $v=[V_\G,\OO]$.
\end{proof}

For each $m>0$ such that the set $\Omega_m$ of nuclei of height $m$ respect to $W_0$ is non-empty we will set a good order in $\Omega_m$. Here, recall that a good order is an order such that every not empty subset has some minimal element, this order has no relation with the height nor the order in the Whitehead poset, it is just a good order that we can find due to the axiom of choice. To avoid confusion, we will use the $\prec_m$ symbol for this order. We collect now a series of definitions that will be used in the proof. 
\begin{itemize}
    \item $K_m$: Union of the (closed) stars of those nuclei of $\MM_\G$ whose height respect to $W_0$ is $\leq m$.
    \item $K_{\prec_m u}$: For a nuclear vertex $u=[X,\OO]$ of height $m$, 
    $$
    K_{\prec_m u}=K_{m-1}\cup\left(\bigcup_{\|v\|_W=m,v\prec_m u}\st(v)\right),
    $$
    in particular, if $u_0$ is the $\prec_m$-minimal element of height $m$ then $K_{\prec_m u_0}=K_{m-1}$.
    \item $\hat{R}$ is the poset of vertices lying in $K_{\prec_m u}\cap\st(u)$, so if $|\cdot|$ denotes simplicial realization, we have $|\hat{R}|=K_{\prec_m u}\cap\st(u)$.
    \item $R$: Subposet of $\hat{R}$ consisting of $[X,\uua]\in\hat{R}$ whose non trivial based partitions are reductive at $u$.
    \item $S$: Subposet of $R$ consisting of $[X,\uua]\in R$ which contain a based partition which is strictly reductive at $u$. It holds (we will prove it later) that $|S|=|R|\cap K_{m-1}$.
\end{itemize}

Since $\MM_\G=\cup K_m$ is a simplicial complex and $K_m\subseteq K_{m+1}$ for any $m\geq 0$, it is enough to prove inductively that each $K_m$ is contractible. The first step of this process is trivial: as Lemma \ref{lem:minred} implies that $u_0=[V_\G,\OO]$ is the only nuclear vertex with minimal reductivity respect to $W_0$ by  we just have the star  $\st(u_0)$ which is is contractible. So from now on we may assume that $m=\|u\|_{W_0}$ is not minimal.
We will prove by induction in the chosen well order that each $K_{\prec_m u}\cup\st(u)$ is contractible. By the induction hypothesis  we assume that $K_{\prec_m u}$ is contractible. Since $st(u)$ is contractible too, it is enough to prove that $|\hat{R}|=K_{\prec_m u}\cap\st(u)$ is contractible. The proof will proceed as follows:
 \begin{itemize}
        \item 1st step: We will prove that $|\hat{R}|$ is homotopy equivalent to $|R|$.
        \item 2nd step: We will prove that $|S|$ is contractible .
        \item 3rd step: We will prove that  $|R|$ is homotopy equivalent to  $|S|$, and the proof will be over.
    \end{itemize}

\begin{lem} Consider the nuclear vertex $u=[X, \OO]$ of height $m$  and suppose that $[X,\uua]$ is a vertex of $\hat{R}$. Let $\Theta(\uua)$ be obtained by replacing each based partition $(X,\ua_a)$ of $(X,\uua)$ such that $\red_{W_0}(X,\ua_a)<0$ by the trivial based partition with the same operative factor (so $[X,\Theta(\underline{\underline{A}})]\leq [X, \underline{\underline{A}}]$). Then $[X, \Theta(\uua)]$ is in $R$.
     \label{lem:5.4}
\end{lem}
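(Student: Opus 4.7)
The lemma contains three assertions: that $[X,\Theta(\uua)]$ still sits in $\st(u)$ (since $\Theta(\uua)\leq \uua$ by construction), that its non-trivial based partitions are reductive at $u$ (immediate because only the partitions with $\red_{W_0}<0$ were trivialized), and that it belongs to $K_{\prec_m u}$. Only the last assertion requires real work, and the plan is to convert the witness of $[X,\uua]\in K_{\prec_m u}$ into a witness for $[X,\Theta(\uua)]\in K_{\prec_m u}$.

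By the definition of $K_{\prec_m u}\cap\st(u)$, there is a nuclear vertex $v=[Y,\OO]$ with either $\|v\|_{W_0}<m$, or $\|v\|_{W_0}=m$ and $v\prec_m u$, together with an $\alpha\in\PAut(A_\G)$ carried by $(X,\uua)$ satisfying $\alpha(X)=Y$; in particular $\red_{W_0}(X,\alpha)=m-\|v\|_{W_0}\geq 0$. Since the based partitions of $\uua$ are pairwise compatible, Lemma \ref{lem:compatible1} ensures that factors of $\alpha$ carried by different based partitions of $\uua$ commute modulo inner automorphisms. I can therefore group the factors by their operative letter $a$, obtaining, modulo an inner automorphism, an identity $\alpha=\prod_a \beta_{(a)}$ with $\beta_{(a)}$ carried by $(X,\ua_a)$, and then split this product as $\alpha=\alpha_-\alpha_+$ (again modulo inner), where $\alpha_-$ is the product over the good indices (those $a$ with $\red_{W_0}(X,\ua_a)\geq 0$, so that $\ua_a$ survives in $\Theta(\uua)$) and $\alpha_+$ the product over the bad ones.

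The Factorization Lemma \ref{lem:Facto}, applied to $\alpha$, $\alpha_-$ and $\alpha_+$ separately using the pairwise compatibility of the $\ua_a$'s, yields
\[\red_{W_0}(X,\alpha)=\red_{W_0}(X,\alpha_-)+\red_{W_0}(X,\alpha_+).\]
Every non-inner bad factor $\beta_{(a)}$ satisfies $\red_{W_0}(X,\beta_{(a)})\leq \red_{W_0}(X,\ua_a)<0$ by the definition of reductivity of a marked based partition, while inner factors contribute $0$ by Lemma \ref{lem:split}(ii). Hence $\red_{W_0}(X,\alpha_+)\leq 0$ and therefore $\red_{W_0}(X,\alpha_-)\geq \red_{W_0}(X,\alpha)\geq 0$. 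Since $\alpha_-$ is carried by $(X,\Theta(\uua))$, the vertex $[\alpha_-(X),\OO]$ is nuclear of height $m-\red_{W_0}(X,\alpha_-)\leq m$ and $[X,\Theta(\uua)]\in\st([\alpha_-(X),\OO])$.

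Two cases remain. If $\red_{W_0}(X,\alpha_-)>0$, then $[\alpha_-(X),\OO]$ lies in $K_{m-1}\subseteq K_{\prec_m u}$ and we are done. Otherwise $\red_{W_0}(X,\alpha_-)=\red_{W_0}(X,\alpha_+)=0$; the vanishing of $\red_{W_0}(X,\alpha_+)$, being a sum of non-positive terms each strictly negative when non-inner, forces every bad $\beta_{(a)}$ to be inner, so $\alpha_+$ itself is inner and $[\alpha_-(X),\OO]=[\alpha(X),\OO]=v$, which lies in $K_{\prec_m u}$ by the hypothesis on $v$. The main obstacle is making the split $\alpha=\alpha_-\alpha_+$ precise enough for the Factorization Lemma to apply while tracking that the inner-automorphism discrepancy neither alters reductivities (Lemma \ref{lem:split}(ii)) nor the identification of the final nuclear vertex.
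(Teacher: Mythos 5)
Your proposal is correct and is essentially the paper's proof: both decompose the witness $\alpha$ by operative letter, apply the Factorization Lemma to split the reductivity as a sum over based partitions, observe that factors carried by partitions with negative reductivity must contribute non-positively (hence are inner once the totals balance), and extract from the remaining factors a witness carried by $\Theta(\uua)$ landing in $K_{\prec_m u}$. The only difference is organizational: you bundle all bad factors into $\alpha_+$ and case on whether $\red_{W_0}(X,\alpha_-)$ is positive or zero, whereas the paper first cases on whether some individual $\alpha_i$ is strictly reductive and otherwise deduces all reductivities vanish and deletes the inner factors one at a time.
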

\begin{proof} Note that it is enough to show that $[X, \Theta(\uua)]$ lies in $\hat{R}$, because by construction this implies that it also lies in $R$.
    As $[X,\uua]$ is in $\hat{R}$, it belongs to the intersection $\st(u)\cap \st(w)$ for some nuclear vertex $w=[Y,\OO]$ of $K_{\prec_m u}$. This last fact implies that $\|w\|_{W_0}\leq\|u\|_{W_0}$. As the starts of $u=[X,\OO]$ and $w=[Y,\OO]$ intersect, by Lemma \ref{lem:intersect} there is some $\alpha\in\PAut(A_\G)$ carried by $(X,\underline{\underline{A}})$ such that $\alpha(X)=Y$ so $\alpha(u)=w$ and therefore
    $$\red_{W_0}(X,\alpha)=\|u\|_{W_0}-\|\alpha(u)\|_{W_0}\geq 0.$$
    We can decompose $\alpha=\alpha_1\cdots\alpha_n$ where each $\alpha_i$ is carried by the based partition $(X,\ua_i)$ (which are pairwise compatible). 
    
    Assume first that there is some $i$ with $\red_{W_0}(X,\alpha_i)>0$. Then $\ua_i$ is a based partition that remains invariant when passing to $\Theta(\uua)$ so $\alpha_i$ is carried by $(X,\Theta(\uua))$ and therefore
    $$[X,\Theta(\uua)]=[\alpha_i(X),\Theta(\uua)]\geq[\alpha_i(X),\OO]=\alpha_i(u)$$
    so $[X,\Theta(\uua)]$ lies in the star of $\alpha_i(u)$ and
    $$\|u\|_{W_0}-\|\alpha_i(u)\|_{W_0}=\red_{W_0}(X,\alpha_i)>0.$$
    Therefore $[X,\Theta(\uua)]$ lies in $K_{m-1}\subseteq K_{\prec_m u}$, i.e.
    $$[X,\Theta(\uua)]\in \hat{R}.$$
    So now we may assume that $\red_{W_0}(X,\alpha_i)\leq 0$ for all $i$. Using the Factorization Lemma \ref{lem:Facto} we have
$$
\sum_{i=1}^n \red_{W_0}(X,\alpha_i)=\red_{W_0}(X,\alpha)
$$
and as $\red_{W_0}(X,\alpha)\geq 0$ we must have $\red_{W_0}(X,\alpha_i)=0$ for all $i$. Assume now that some $(X,\ua_i)$ has negative reductivity. By definition of reductivity this implies that $\alpha_i$ is inner so $\alpha_i(u)=u$ and $\alpha_i$ can be removed from $\alpha$. Repeating this argument, we eventually remove all those $\alpha_i$'s from $\alpha$ and we get $\alpha'$ carried by $[X,\Theta(\uua)]$ such that $\alpha'(u)=\alpha(u)=w$. Therefore, $[X,\Theta(\uua)]$ is in the star of $w=\alpha(u)$
and again this implies that $[X,\Theta(\uua)]$ is in $\hat{R}$. 
\end{proof}

\begin{prop}\label{prop:rtor}
    $|\hat{R}|$ is homotopy equivalent to $|R|$
\end{prop}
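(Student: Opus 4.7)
The plan is to show that the map constructed in Lemma \ref{lem:5.4} realizes $|R|$ as a strong deformation retract of $|\hat R|$. Define $\Theta : \hat R \to \hat R$ by $[X,\uua] \mapsto [X,\Theta(\uua)]$, where $\Theta(\uua)$ is as in Lemma \ref{lem:5.4}; that lemma guarantees the image lies in $R \subseteq \hat R$. To conclude via Lemma \ref{lem:posetmap}, it suffices to verify that $\Theta$ is an idempotent order-preserving map with $\Theta(x)\leq x$ for all $x$ and with $\Theta(\hat R)=R$.

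The inequality $\Theta([X,\uua])\leq [X,\uua]$ is immediate from the construction: every based partition that gets replaced is replaced by the trivial one, and the trivial based partition with operative factor $a$ is the minimum among based partitions with that operative factor (its unique petal $\G-\st_\G(a)$ contains every other petal). Idempotence is equally direct: after applying $\Theta$, any non-trivial based partition surviving in $\Theta(\uua)$ satisfies $\red_{W_0}(X,\cdot)\geq 0$ by construction, and trivial based partitions have reductivity zero by convention, so no partition is replaced on a second pass.

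The main point is order-preservation. Assume $[X,\uua]\leq[X,\uub]$, so $\ua_a\leq\ub_a$ for every $a\in V_\G$. If $\ua_a$ is either trivial or satisfies $\red_{W_0}(X,\ua_a)<0$, then $\Theta(\ua_a)$ is the trivial based partition, which is the minimum, hence $\Theta(\ua_a)\leq\Theta(\ub_a)$ automatically. Otherwise $\ua_a$ is non-trivial with $\red_{W_0}(X,\ua_a)\geq 0$, so $\Theta(\ua_a)=\ua_a$; since $\ua_a\leq\ub_a$ and $\ua_a$ is non-trivial, Lemma \ref{lem:ordenRed} gives $\red_{W_0}(X,\ub_a)\geq\red_{W_0}(X,\ua_a)\geq 0$, so $\ub_a$ is also not replaced by $\Theta$ and we obtain $\Theta(\ub_a)=\ub_a\geq\ua_a=\Theta(\ua_a)$.

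Finally $\Theta(\hat R)=R$: the inclusion $\Theta(\hat R)\subseteq R$ is exactly Lemma \ref{lem:5.4}, while if $[X,\uua]\in R$ then each non-trivial based partition of $\uua$ is already reductive, so $\Theta([X,\uua])=[X,\uua]$. Lemma \ref{lem:posetmap} now gives that $|R|=|\Theta(\hat R)|$ is a strong deformation retract of $|\hat R|$, and in particular the two spaces are homotopy equivalent. The only non-formal input is the monotonicity of reductivity in Lemma \ref{lem:ordenRed}, which was established precisely for this argument; no further combinatorial analysis of crossings or refinements is required here.
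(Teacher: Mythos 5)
Your proof is correct and follows essentially the same route as the paper: both define the map $f(v)=[X,\Theta(\uua)]$ via Lemma~\ref{lem:5.4} and invoke Lemma~\ref{lem:posetmap} to obtain a strong deformation retraction. You usefully spell out the verifications that $\Theta$ is order-preserving (using Lemma~\ref{lem:ordenRed}) and idempotent, which the paper states but leaves implicit.
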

\begin{proof} Consider the map $f$ defined by sending each $v=[X,\uua]$ of $\hat{R}$ to $f(v)=[X,\Theta(\uua)]$ where $\Theta(\uua)$ is as in Lemma \ref{lem:5.4}. Then $f$ is an idempotent poset map $f:\hat{R}\to\hat{R}$ with $f(v)\leq v$ for any $v\in\hat{R}$, and $f(\hat{R})=R$. By Lemma \ref{lem:posetmap}, this shows that $|R|$ is a strong deformation retract of $|\hat{R}|$. 
\end{proof}

This lemma completes the first step of the proof of Theorem B. To proceed with the second step,
we take $S$ as defined before. We first check that $|S|=|R|\cap K_{m-1}$.
Clearly $|S|\subseteq |R|\cap K_{m-1}$. If $v\in |R|\cap K_{m-1}$ is a vertex, then it must be stritctly reductive, because it is also contained in the star of a nuclear vertex of at most height $m-1$. So there is a strictly reductive automorphism carried by $v$; applying the factorization Lemma \ref{lem:Facto} we see that at least one of the automorphisms carried by a based partition of $v$ must be strictly reductive, so $v\in S$ and $|S|=|R|\cap K_{m-1}$. 

Although $R$ is highly dependent on the ordering $\prec_m$ of the vertices of height $m$, the main advantage of working with $S$ is that it is independent of this ordering. In fact, for any vertex $v=[X,\uua]\geq u=[X,\OO]$, $v$ lies in $S$ if and only if all its based partitions $(X,\ua_i)$, $i\in V_\G$, are reductive and at least one of them is strictly reductive. 
Moreover, if $\gamma(X)=V_\G$, then
$$\red_{W_0}(X,\ua_i)=\red_{\gamma(W_0)}(V_\G,\ua_i)$$
and this means that for the rest of the proof we may assume that we are in the star of the nuclear vertex $u_0=[V_\G,\OO]$ but we have to consider reductivities over an arbitrary set of cyclically graphically reduced words $W=\gamma(W_0)$. Then $\gamma(u)=[\gamma(X),\OO]=u_0$ and the hypothesis that the height of $u$ is not minimal implies that the height of $u_0$ with respect to $W$ is not minimal because of the following:
 \begin{equation}\label{eq:height}
 \|u_0\|_W=\|\gamma(u)\|_{\gamma(W_0)}=\|u\|_{W_0}>\|u_0\|_{W_0}=\|\gamma(u_0)\|_{\gamma(W_0)}=\|\gamma(u_0)\|_W.\end{equation}

For technical reasons that will be clear later, we will prove that a complex more general than $|S|$ is contractible. More precisely, we will consider the simplicial realization of a poset defined as $S$ but with the extra requirement that all its vertex types have to be compatible with a prescribed family of based partitions. We will need first a technical Lemma. 

\begin{lem}
    Suppose that $\underline{Z}^1, \underline{Z}^2, \ldots, \underline{Z}^r$ is a compatible collection of fully reductive based partitions. Then there exists a fully strictly reductive based partition $\underline{P}$ which is compatible with each $\underline{Z}^i$. \label{lem:preCon}
\end{lem}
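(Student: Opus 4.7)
The plan is to combine the Existence Lemma with the Factorization Lemma and the ``swap'' estimate of Lemma \ref{lem:newCZ}. For each $i$, fix an automorphism $\beta_i$ whose full carrier is $\underline{Z}^i$ and which is reductive at $V_\G$, so $\red_W(\beta_i)\geq 0$. Since the based partitions $\underline{Z}^i$ are pairwise compatible, Lemma \ref{lem:compatible1} shows that the $\beta_i$'s commute modulo $\Inn(A_\G)$, so the Factorization Lemma applies to $\beta:=\beta_1\cdots\beta_r$, giving
$$\red_W(\beta)=\sum_{i=1}^r \red_W(\beta_i)\geq 0.$$
If some $\red_W(\beta_i)>0$, then $\underline{Z}^i$ is itself fully strictly reductive and, being compatible with every $\underline{Z}^j$ by hypothesis, we may take $\underline{P}:=\underline{Z}^i$.

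Otherwise $\red_W(\beta_i)=0$ for every $i$, whence $\red_W(\beta)=0$ and $\|\beta(u_0)\|_W=\|u_0\|_W$, which is non-minimal by the reduction (\ref{eq:height}). Applying the Existence Lemma then yields a partial conjugation $\alpha=C^a_A$ with $\red_W(\alpha)>0$. Factoring $\alpha$ over the connected components of $A$ (the corresponding partial conjugations commute by Theorem \ref{teo:conmu}\,(i), so Factorization applies), we may replace $\alpha$ by $C^a_D$ for a single component $D$ of $\G-\st_\G(a)$ with $\red_W(C^a_D)>0$.

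The remaining task is to arrange that the full carrier $\underline{P}_a=\{\{a\},D,\G-\st_\G(a)-D\}$ is compatible with every $\underline{Z}^i$. For each $i$ with $a\in\st_\G(z_i)$ this is automatic, so assume $a\notin\st_\G(z_i)$. By Lemma \ref{lem:compatible}, any failure of compatibility forces $D$ (or some component of $\G-\st_\G(a)-D$) to be a shared component sitting in a non-dominant petal $Q$ of $\underline{Z}^i$. In the first case, Lemma \ref{lem:newCZ} produces $\red_W((C^{z_i}_D)^{n_Q})<0$, where $n_Q$ is the exponent with which $\beta_i$ acts on $Q$. Applying the Factorization Lemma to
$$\beta_i=(C^{z_i}_D)^{n_Q}(C^{z_i}_{Q-D})^{n_Q}\prod_{Q'\neq Q}(C^{z_i}_{Q'})^{n_{Q'}}$$
together with $\red_W(\beta_i)=0$ forces at least one of the remaining factors to have strictly positive reductivity. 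If this factor is a single-petal $(C^{z_i}_{Q'})^{n_{Q'}}$ (with $Q'$ a petal of $\underline{Z}^i$), then its full carrier $\{\{z_i\},Q',\G-\st_\G(z_i)-Q'\}$ satisfies $\leq\underline{Z}^i$, hence is compatible with every $\underline{Z}^k$ by Lemma \ref{lem:ordercomp}, and we take $\underline{P}$ to be this full carrier.

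The main obstacle is iterating this substitution: the positive factor above could be $(C^{z_i}_{Q-D})^{n_Q}$, whose full carrier is not $\leq\underline{Z}^i$ so that compatibility with the other $\underline{Z}^k$'s is not automatic. In that case one restarts the procedure by factoring $(C^{z_i}_{Q-D})^{n_Q}$ over the connected components of $Q-D$, obtaining a new strictly reductive single-component partial conjugation, and reanalyzes compatibility. Termination is ensured by a minimality argument on a suitable complexity (e.g.\ the number of shared components between the chosen operative factor and the $z_i$'s that witness a crossing), which must strictly decrease at each step; when it reaches zero, the current full carrier is compatible with every $\underline{Z}^i$ and strictly reductive, providing the desired $\underline{P}$.
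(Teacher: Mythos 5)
Your proposal follows the same toolkit as the paper (Existence Lemma, Factorization Lemma, Lemma \ref{lem:newCZ}), but it diverges at the crucial organizational step, and that divergence creates a real gap. The paper does not set up an iterative repair-and-restart procedure: it chooses the strictly reductive partial conjugation $\alpha$ \emph{up front} so that its full carrier $\underline{P}$ crosses the fewest possible of the $\underline{Z}^i$'s, and then derives a contradiction if the crossing number is positive. The contradiction is obtained by an innermost/refinement/disjunction argument: the disjunction $\underline{P}''$ has strictly fewer crossings, so by minimality it cannot be strictly reductive, hence some factor $\alpha_i$ of $\alpha$ with support equal to a crossing has $\red_W(\alpha_i)>0$; Lemma \ref{lem:newCZ} then forces the matching factor $\beta_j$ of $\beta$ to be strictly negative, and deleting $\beta_j$ from $\beta$ produces a strictly reductive automorphism whose full carrier lies below the refinement $(\underline{Z}^{i_1})'$ (hence is compatible with all $\underline{Z}^k$) and crosses fewer $\underline{Z}^i$'s — contradicting the minimal choice.

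Your iterative scheme never establishes a decreasing complexity. When you ``restart'' with $(C^{z_i}_{Q-D})^{n_Q}$ (or a single component of it), the operative factor changes from $a$ to $z_i$, and the new full carrier $\{\{z_i\},E,\G-\st_\G(z_i)-E\}$ is \emph{not} $\leq\underline{Z}^i$ (indeed $Q\supsetneq E$ is a petal of $\underline{Z}^i$ that is split), so it may now cross some other $\underline{Z}^j$ with which the original $\underline{P}_a$ had no issue. Your suggested invariant (``number of shared components between the chosen operative factor and the $z_i$'s that witness a crossing'') is not shown to decrease and may in fact increase after such a restart, because the set of relevant shared components is entirely reshuffled when the operative factor changes. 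So the termination claim is asserted rather than proved, and this is exactly the difficulty the paper's global minimization over crossing numbers is designed to dodge: the paper extracts a single clean nonnegative integer invariant (crossings of the candidate's full carrier with the $\underline{Z}^i$'s), minimizes it first, and then shows the minimum must be zero.

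Two smaller points: (i) when you deduce $\red_W((C^{z_i}_D)^{n_Q})<0$ via Lemma \ref{lem:newCZ} you need $n_Q\neq 0$, which requires (as the paper does) a preliminary replacement of $\beta_i$ by an inner-equivalent representative acting trivially on the dominant petal; and (ii) your dichotomy in the first paragraph should be in terms of $\red_W(\underline{Z}^i)$, not $\red_W(\beta_i)$ for a fixed choice of $\beta_i$ — if $\red_W(\beta_i)=0$ but some other carried automorphism were strictly reductive, the first branch should still apply (its full carrier is $\leq\underline{Z}^i$, hence compatible with every $\underline{Z}^j$ by Lemma \ref{lem:ordercomp}).
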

\begin{proof}
  If there is some $i$ such that $\red_W(\underline{Z}^i)>0$, then we can take $\underline{P}=\underline{Z}^i$ and we have finished. So we may assume that $\red_W(\underline{Z}^i)=0$ for any $i$.
  
  Using Lemma \ref{lem:existence} for the vertex $u_0=[V_\G,\OO]$ (recall that by (\ref{eq:height}), the height of $u_0$ with respect to $W$ is not minimal) we deduce that there is some partial conjugation $\alpha$ with $\red_W(\alpha)>0$ and we may choose $\alpha$ such that its full carrier $\underline{P}$ crosses a minimal number of the $\underline{Z}^i$'s. 
  
  If $\underline{P}$ crosses none of the $\z^i$  then it is compatible with them and the Lemma is proved. Suppose $\underline{P}$ crosses some of the $\z^i$. For each $i$, define $\underline{\z}^i$ to be the vertex type whose only nontrivial based partition is $\z^i$. Let $\underline{\z}=\underline{\z}^1\vee\cdots\vee\underline{\z}^r$, and let $\z^{i_1}\vee\cdots\vee \z^{i_k}$ be a based partition of $\underline{\z }$ which is $\underline{P}$-innermost. Then $\underline{{P}}$ crosses at least one of the $\z^{i_j}$, say $\z^{i_1}$. 

  Let $\beta$ be an automorphism having $\underline{Z}^{i_1}$ as full carrier such that $\red_W(\beta)=\red_W(\underline{Z}^{i_1})$ (so $\red_W(\beta)=0$). Let $(\underline{Z}^{i_1})'$ be the refinement of $(\underline{Z}^{i_1})$ respect to $\underline{P}$ and $\underline{P}'$ the refinement of $\underline{P}$ respect to $\z^{i_1}$.
  By multiplying $\alpha$ and $\beta$ by inner automorphisms if necessary, we may assume that $\alpha$ acts trivially on the petal of $\underline{P}$ containing the operative factor of $\underline{Z}^{i_1}$ and the same thing for $\beta$ respect to $\underline{P}$. We can factor
  $$\alpha=\alpha_1\ldots\alpha_t,$$
  $$\beta=\beta_1\ldots\beta_k$$
  where the support of the factors are the petals of the refined based partitions $\underline{P}'$ and $\underline{Z}^{i_1}$.
  Using the factorization Lemma \ref{lem:Facto}, we have
  $$
\red_W(\alpha)=\sum_{i=1}^t\red_W(\alpha_i )>0    \\\\\\\\\ \text{ and } \\\\\\\\\ \red_W(\beta)=\sum_{j=1}^k\red_W(\beta_j )=0
  $$
    The  disjunction $\underline{P}''$ of $\underline{P}$ from $\z^{i_1}$ crosses fewer of the $\z^i$ than $\underline{P}$ did, and so does the full carrier of any automorphism associated to it (the way of getting the full carrier is by joining petals, so new crosses cannot appear) so, by the choice of $\underline{P}$, we see that $\underline{P}''$ cannot be strictly reductive, i.e. $\red_W(\underline{P}'')\leq 0$. 
    
    As $\underline{P}''$ is the full carrier of the automorphism obtained from $\alpha$ by removing the factors $\alpha_i$ whose support 
    is a cross between $\up$ and $\underline{Z}^{i_1}$, this implies that there must be some $\alpha_i$ with support $D$ where $D$ is such a crossing, i.e., $D$ is a petal of both refinements $\underline{P}'$ and $(\underline{Z}^{i_1})'$ and such that
    $$\red_W(\alpha_i)>0.$$
  The fact that $\underline{Z}^{i_1}$ is the full carrier of $\beta$ implies that there is some factor $\beta_j$ having $D$ as support. Moreover, both $\alpha_i$ and $\beta_j$ are powers of partial conjugations on $D$ so we may apply Lemma \ref{lem:newCZ} and deduce
  $$\red_W(\beta_j)<0.$$
  But this means that if we remove $\beta_j$ from $\beta$ we get a strictly reductive automorphism having full carrier $\underline{P}_1$ with $\underline{P}_1\leq(\underline{Z}^{i_1})'$ which again contradicts the choice of $\underline{P}$ because that full carrier is compatible with all of $\underline{Z}^1, \underline{Z}^2, \ldots, \underline{Z}^r$.

\end{proof}

\begin{prop}[Contractibility of $S$]\label{lem:contS}
      Let $\underline{Z}^1, \underline{Z}^2, \ldots, \underline{Z}^r$ be a compatible collection of fully reductive based partitions, and let $T$ be the subposet of $S$ consisting of all those vertex types in $S$ which are compatible with $\underline{Z}^1, \underline{Z}^2, \ldots, \underline{Z}^r$. Then $|T|$ is contractible. 
\end{prop}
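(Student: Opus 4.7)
The plan is to show $|T|$ is contractible via a zigzag of poset maps in the style of \cite{MM}. First, I would apply Lemma~\ref{lem:preCon} to the compatible collection $\underline{Z}^1, \ldots, \underline{Z}^r$ to obtain a fully strictly reductive based partition $\underline{P}$ compatible with every $\underline{Z}^i$. Let $\underline{\underline{P}}$ be the vertex type whose only non-trivial based partition is $\underline{P}$; by construction $\underline{\underline{P}} \in T$, since it is strictly reductive at the operative factor of $\underline{P}$ and compatible with all of the $\underline{Z}^i$.

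Next, I would construct poset maps $g, f : T \to T$ satisfying
\[
g(\underline{\underline{A}}) \leq \underline{\underline{A}}, \qquad g(\underline{\underline{A}}) \leq f(\underline{\underline{A}}), \qquad \underline{\underline{P}} \leq f(\underline{\underline{A}}),
\]
for every $\underline{\underline{A}} \in T$. Three applications of Lemma~\ref{lem:posetmap}, comparing the identity, $g$, $f$, and the constant map to $\underline{\underline{P}}$, then yield $|\mathrm{id}_T| \simeq |g| \simeq |f| \simeq \text{const}$, proving $|T|$ contractible. I would define $g(\underline{\underline{A}})$ by iteratively disjoining the $\underline{P}$-innermost based partitions of $\underline{\underline{A}}$ with respect to $\underline{P}$; this stays within the class of vertex types by Lemma~\ref{lem:innermost}(i), and terminates because each disjunction step strictly reduces the total number of crossings with $\underline{P}$ (property (ii) of disjunction). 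During the process, any based partition that becomes non-reductive is to be replaced by the trivial one with the same operative factor, and by Lemma~\ref{lem:collComp} compatibility with the $\underline{Z}^i$ is preserved throughout. Then $f(\underline{\underline{A}}) := g(\underline{\underline{A}}) \vee \underline{\underline{P}}$ exists because $g(\underline{\underline{A}})$ is compatible with $\underline{\underline{P}}$, lies in $T$ (its based partition at the operative factor of $\underline{P}$ dominates $\underline{P}$, hence is strictly reductive by Lemma~\ref{lem:ordenRed}), and manifestly satisfies $\underline{\underline{P}} \leq f(\underline{\underline{A}})$ and $g(\underline{\underline{A}}) \leq f(\underline{\underline{A}})$ by definition of the join.

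The hard part will be monotonicity: given $\underline{\underline{A}} \leq \underline{\underline{B}}$ in $T$, I need $g(\underline{\underline{A}}) \leq g(\underline{\underline{B}})$ and hence $f(\underline{\underline{A}}) \leq f(\underline{\underline{B}})$. A single disjunction step is controlled by Lemma~\ref{lem:refMantieneOrden}, and the behaviour of the innermost-crossing sets along the order is governed by Lemma~\ref{lem:innermostorder}. Combining these across iterations, and checking that the replacement of non-reductive based partitions by trivial ones does not disrupt monotonicity (with reductivity tracked through the Factorization Lemma~\ref{lem:Facto}), is where the technical machinery developed in Sections~\ref{sec:notation} and~\ref{sec:reductivity} becomes essential.
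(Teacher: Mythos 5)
Your opening move is the same as the paper's: invoke Lemma~\ref{lem:preCon} to get a fully strictly reductive $\up$ compatible with the $\underline{Z}^i$'s, and aim to conically contract onto the vertex type $\underline{\up}$. But the plan of a single zigzag $\mathrm{id}_T \simeq g \simeq f \simeq \mathrm{const}$, where $g$ is one global ``iterated disjunction'' map, has two gaps that together constitute essentially all the technical content the paper has to supply.

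First, you have not built in the crucial extra choice: in the paper, $\up$ is chosen not just to be fully strictly reductive and compatible with the $\underline{Z}^i$'s, but to have \emph{maximal} reductivity $\red_W(\up)$ among all such based partitions. This maximality feeds into Claim~(*) in the paper's proof, which (via the Factorization Lemma and, critically, Lemma~\ref{lem:newCZ}) establishes that disjoining with respect to this particular $\up$ does not decrease reductivity: if $\uub$ has all based partitions reductive and at least one strictly reductive, the same holds for the $\mathcal{I}$-disjunction $\uub''$. Without this, your workaround of ``replace any based partition that becomes non-reductive by the trivial one'' does not save you, because the unique strictly reductive based partition of a vertex type may itself become non-strictly-reductive under disjunction, at which point your $g(\uua)$ exits $S$, hence exits $T$, and $g$ is not a self-map of $T$ at all. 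This is not a detail to be tracked later; it is the reason Lemma~\ref{lem:newCZ} is in the paper.

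Second, the monotonicity of your $g$ is genuinely problematic, not merely ``the hard part.'' Lemma~\ref{lem:refMantieneOrden} gives order-preservation of refinement/disjunction only when applied to a \emph{fixed} set $\mathcal{I}$ of operative factors across both vertex types; but the $\up$-innermost set $\mathcal{I}(\uua)$ varies with $\uua$, and Lemma~\ref{lem:innermostorder}~(ii) only pins it down when the crossing number with $\up$ is constant. For $\uua < \uub$ with $\cro(\uua,\up) < \cro(\uub,\up)$, the innermost sets can differ, and one step of your global disjunction need not be order-preserving. The paper circumvents this by stratifying $T = T_m \supseteq T_{m-1} \supseteq \cdots \supseteq T_0$ by crossing number and proving $|T_k| \simeq |T_{k-1}|$ one layer at a time, working inside connected components of the top stratum $M_k$ (where $\mathcal{I}$ is locally constant) and applying Quillen's fiber Lemma~\ref{lem:posetMap2} to the inclusion $T_{k-1} \hookrightarrow R_k(T_k)$ rather than a direct zigzag. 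Your one-shot $g$ replaces exactly this stratification, and you would need to prove, not merely flag as ``technical,'' that iterated disjunction commutes with the order despite the varying innermost sets. I do not see how to do that without reproducing the paper's layered argument, and I would expect the attempt to fail on explicit examples where refining $\uua$ to $\uub$ changes which based partitions are $\up$-innermost.

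One smaller point: your $f(\uua) = g(\uua) \vee \underline{\up}$ construction does recover the paper's conical contraction of $T_0$, and the verification that the join stays in $T$ (via Lemma~\ref{lem:ComSup} and Lemma~\ref{lem:ordercomp}) is fine. So the last leg of your zigzag is sound; it is the first leg, reducing $T$ to the $\up$-compatible subposet $T_0$, that is missing its proof.
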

\begin{proof} 
    By Lemma \ref{lem:preCon}, there exists  a fully strictly reductive based partition $\up$ compatible with $\underline{Z}^1, \underline{Z}^2, \ldots, \underline{Z}^r$. We may choose such a $\up$ so that the reductivity $\red_W(\up)$ is maximal between all the  based partitions satisfying the same conditions (observe that there are only finitely many based partitions). Let $\underline{\up}$ be the vertex type with $\up$ as the only nontrivial based partition.

    We let $T_0\subseteq T$ be the subposet of $T$ consisting of those vertex types in which are also compatible with $\up$, i.e.
    $$T_0=\{\uua\in T\mid \uua\text{ compatible with }\up\}=\{\uua\in S\mid \uua\text{ compatible with }\underline{Z}^1, \underline{Z}^2, \ldots, \underline{Z}^r,\up\}.$$
    Observe that $\underline{\up}\in T_0$ so $T_0$ is not empty. We define the next poset map:
    $$
\begin{aligned}
g: T_0 &\longrightarrow T_0 \\
 \ \uua &\longmapsto \uua\vee \underline{\up} \\
\end{aligned}
$$
$g$ is well defined by definition of $T_0$. For any $\uua$ in $T_0$, we have  $g(\uua)\geq \uua$ and
 $g(\uua)\geq \underline{\up}$ so by Lemma \ref{lem:connically}, $|T_0|$ is (connically) contractible.

Now, we want to see that $T\simeq T_0$. Let 
$$m=\max\{\cro(\underline{\underline{A}},\up)\ | \ \underline{\underline{A}}\in T\},$$ and for any $0\leq k\leq m$ let 
$T_k$ be the subposet of $T$ consisting of those $\uua$ in $T$ that cross $\up$ at most $k$ times, i.e. 
$$T_k=\{\uua\in S\mid \uua\text{ compatible with }\underline{Z}^1, \underline{Z}^2, \ldots, \underline{Z}^r\text{ and }\cro(\underline{\underline{A}},\up)\leq k\}.$$
Then $T=T_m$. We claim that $|T_m|\simeq |T_{m-1}|\simeq\cdots\simeq |T_0|$ and as $|T_0|$ is contractible this will imply the result. We will argue by induction and prove that $T_k\simeq T_{k-1}$. We divide the proof in the following steps.

\medskip

\noindent{\bf Step 1}: There is a poset map $R_k:T_k\to T_k$ inducing a homotopy equivalence $|T_k|\simeq|R_k(T_k)|$ and whose image contains $T_{k-1}$, i.e., $T_{k-1}\subseteq R_k(T_k)$.

\smallskip

Let $M_k=\{\uua\in T_k\ | \ \cro(\uua,\up)=k\}$, and 
$$\mathcal{I}(\uua)=\{ a\in V_\G \ |\ \text{the based partition } \uua_a \text{ is $\up$-innermost in }\uua\}.$$
We claim first that if $\uua, \uub\in M_k$ lie in the same connected component of the geometric realization of $M_k$, then $\mathcal{I}(\uua)=\mathcal{I}(\uub)$. Without loss of generality, we can suppose that $\uua\leq\uub$, because in other case we replace one of them by the first vertex in a path connecting them and argue by induction on distance. So $\uua$ is obtained from $\uub$ by coalescing petals and, by definition of $M_k$, we can not decrease the number of petals that cross $\up$, i.e. $\cro(\uua,\up)=\cro(\uub,\up)$. Then Lemma \ref{lem:innermostorder} implies $\mathcal{I}(\uua)=\mathcal{I}(\uub)$. For such a connected component let $\mathcal{I}(W)$ denote this set of vertices of $V_\G$.

For $\uua\in M_k$ lying in the connected component $W$, let $\uua'$ be the result of refining $\uua$ respect to $\up$ at each $a\in\mathcal{I}(W)$. We define the poset map $R_k$ as follows:
    $$
\begin{aligned}
 R_k: T_k &\longrightarrow T_k \\
  \quad \quad \quad \uua &\longmapsto \begin{cases}
    \uua'\text{ if } \uua\in M_k, \\
    \uua \text{ otherwise}
\end{cases} \\
&
\end{aligned}
$$
Observe that by definition of $R_k$, if $\uua\in T_{k-1}$, then $R_k(\uua)=\uua$ so obviously $T_{k-1}$ lies inside the image of $R_k$.
We check now that $R_k$ is a well defined poset map. In other words, we have to see that $T_k$ respects the order relation and that whenever $\uua\in T_k$, then also $R_k(\uua)\in T_k$. 
We begin with the last statement. When $R_k(\uua)=\uua$, there is nothing to check. When $R_k(\uua)=\uua'$, we have to see that the based partitions of $\uua'$ are reductive and at least one is strictly reductive, which is obvious since the same holds true for $\uua$ and refinement does not decrease reductivity (when dividing petals the set of carried automorphisms can only increase, see Lemma \ref{lem:ordenRed}). The fact that the based partitions of $\uua'$ are all compatible with $\underline{Z}^1, \underline{Z}^2, \ldots, \underline{Z}^r$ follows from Lemma \ref{lem:collComp}, since the same is true for $\uua$ and refining respect to $\up$ can not increase the number of crossings with $\up$ so we have $R_k(\uua)\in T_k$. Finally, to see that it is a poset map, let $\uua\leq\uub$ both in $T_k$. If $\uua\in M_k$, then as the number of crossings of $\uub$ with $\up$ can only increase with respect to $\uua$ and $k$ is the maximum possible, we deduce $\uub\in M_k$ and then Lemma \ref{lem:refMantieneOrden} implies $R_k(\uua)=\uua'\leq\uub'=R_k(\uub)$. If $\uua\not\in M_k$, then $\uua=R_k(\uua)\leq\uub\leq R_k(\uub)$ so again $R_k$ keeps the order relation and it is a poset map.

The claim that $R_k$ induces an homotopy equivalence between the simplicial realizations $|T_k|\simeq|R_k(T_k)|$ follows from Lemma \ref{lem:posetmap} because for any $\uua\in T_k$, we have $\uua\leq R_k(\uua)$.

\medskip

\noindent{\bf Step 2}: Let $\iota:T_{k-1}\hookrightarrow R_k(T_k)$ be the inclusion map and take $\uua\in R_k(T_k)-T_{k-1}$. Then there is a poset map
$$R_{\uua}:\iota^{-1}_{\leq}(\uua)\to\iota^{-1}_{\leq}(\uua)$$
where $\iota^{-1}_{\leq}(\uua)=\{\uub\in T_{k-1}\mid \uub\leq\uua\}$ which induces a homotopy equivalence between the simplicial realizations $|\iota^{-1}_{\leq}(\uua)|\simeq|R_{\uua}(\iota^{-1}_{\leq}(\uua))|$.

\smallskip
Observe first that as $\uua\in T_k-T_{k-1}$, we have $\cro(\uua,\up)=k$. We set
$$
\begin{aligned}
 R_{\uua}: \iota^{-1}_{\leq}(\uua) &\longrightarrow \iota^{-1}_{\leq}(\uua) \\
\uub &\longmapsto \uub'
\end{aligned}
$$
where $\uub'$ denotes refinement respect to $\up$ of the based partitions $\ub_a$ of $\uub$ such that $a\in\mathcal{I}(\uua)$ (as defined in step 1).

We have to check that $R_{\uua}$ is a well defined poset map. As $\uub\leq\uua$, Lemma \ref{lem:innermostorder} implies that the refinement $\uub'$ is well defined. The based partitions of $\uub'$ are reductive and at least one is strictly reductive because the same is true for $\uub$ and refinement does not decrease reductivity. Analogously, Lemma \ref{lem:collComp} implies that all the based partitions of $\uub'$ are compatible with all of $\underline{Z}^1, \underline{Z}^2, \ldots, \underline{Z}^r$. As refining respect to $\up$ can not increase the number of crossings with $\up$ we see that $\uub'\in T_{k-1}$ and moreover since the based partitions $\ua_a$ of $\uua$ with $a\in\mathcal{I}(\uua)$ are already refined, we have $\uub'\leq\uua'=\uua$. Therefore $R_{\uua}(\uub)\in\iota^{-1}_{\leq}(\uua)$. Lemma \ref{lem:refMantieneOrden}
implies that $R_{\uua}$ is a poset map.

Again, we have $\uub\leq R_{\uua}(\uub)$ for any $\uub\in\iota^{-1}_{\leq}(\uua)$ so using Lemma \ref{lem:posetmap} we see that $R_{\uua}$ induces an homotopy equivalence between the simplicial realizations $|\iota^{-1}_{\leq}(\uua)|\simeq|R_{\uua}(\iota^{-1}_{\leq}(\uua))|$.

\medskip

\noindent{\bf Step 3}: For $\uua$ and $R_{\uua}$ as in step 2, $|R_{\uua}(\iota^{-1}_{\leq}(\uua))|$ is contractible.

\smallskip

We begin by proving the following:

Claim (*): Let $\uub$ be a vertex type with $\uub\leq\uua$ such that all its based partitions $\ub_a$ with $a\in\mathcal{I}(\uua)$ are refined respect to $\up$ and let $\uub''$ be the $\mathcal{I}(\uua)$-disjunction of $\uub$ respect to $\up$. If all the based partitions of $\uub$ are reductive and at least one is strictly reductive, then the same holds true for $\uub''$. 

Proof of (*): Let $\ub_a$ be a non trivial based partition of $\uub$. If $a\not\in\mathcal{I}(\uua)$, then $\ub_a''=\ub_a$ and they have the same reductivity. Assume now that $a\in\mathcal{I}(\uua)$.
Recall that $\up$ is a fully strictly reductive based partition, so we may choose $\alpha\in\PAut(A_\G)$ such that $\red_W(\alpha)=\red_W(\up)>0$ and $\up$ is the full carrier of $\alpha$. Let $\up'$ be the refinement of $\up$ respect to $\ub_a$, then Lemma \ref{lem:collComp} implies that $\up'$ is also compatible with all of $\underline{Z}^1, \underline{Z}^2, \ldots, \underline{Z}^r$. By multiplying $\alpha$ by an inner automorphism if necessary, we may assume that $\alpha$ acts trivially on the petal of $\up'$ containing $a$. We may factor
$\alpha=\alpha_1\ldots\alpha_k$ so that the support of each $\alpha_i$ is one of the (different) petals of $\up$ not containing $a$, then these automorphisms are powers of partial conjugations on the corresponding petal. The factorization Lemma \ref{lem:Facto} implies that
$$\red_W(\alpha)=\sum_j\red_W(\alpha_j)>0.$$
Assume that $\red_W(\alpha_j)<0$ for some $j$ and let $\gamma_j$ be the product of the rest of factors of $\alpha$, so $\alpha=\alpha_j\gamma_j$. Then
$$\red_W(\alpha)=\red_W(\alpha_j)+\red_W(\gamma_j)<\red_W(\gamma_j)$$
and as the full carrier $\up^j$ of $\gamma_j$ satisfies $\up^j\leq \up'$, it is also compatible with all of $\underline{Z}^1, \underline{Z}^2, \ldots, \underline{Z}^r$. Moreover we have $\red_W(\up^j)>\red_W(\up)$ which contradicts the choice of $\up$. Therefore, we must have $\red_W(\alpha_j)\geq 0$ for any $j$. 

Now, take $\beta\in\PAut(A_\G)$ such that $\red_W(\beta)=\red_W(\ub_a)$. Again, by multiplying by an inner automorphism if necessary, we may assume that $\beta$ acts trivially on the petal of $\ub_a$ that contains $c$, the operative factor of $\up$. We may factor $\beta$ as a product of automorphisms supported in the petals of $\ub_a$ as follows:
$$\beta=\beta_1\ldots\beta_r\nu_1\ldots\nu_l$$
so that the support of each $\beta_i$ is a petal that does not cross $\up$ and the support of each $\nu_i$ is a petal that does cross $\up$. As $\ub_a$ is refined respect to $\up$, this means that the support of $\nu_i$ equals a petal of the refinement $\up'$. Then, taking into account that $\up$ is the full carrier of $\alpha$, we see that there must be some $j$ such that $\alpha_j$ and $\nu_i$ have the same support. Then, as $\red_W(\alpha_j)\geq 0$, Lemma \ref{lem:newCZ} implies $\red_W(\nu_i)\leq 0$. 
Therefore, 
$$\red_W(\beta)=\red_W(\beta_1\ldots\beta_r)+\sum_i\red_W(\nu_i)\leq\red_W(\beta_1\ldots\beta_r)$$
and as the product $\beta_1\ldots\beta_r$ is carried by $\ub_a''$, we deduce 
$$\red_W(\ub_a)\leq\red_W(\ub''_a)$$
so we get the claim.

In particular, (*) implies $\uua''\in T_{k-1}$ because there is at least one element in $\mathcal{I}(\uua)$, so $\cro(\uua'',\up)<\cro(\uua,\up)$. In fact, $\uua''$ lies in $ R_{\uua}(\iota^{-1}_{\leq}(\uua))$.
Consider the map
 $$
\begin{aligned}
D_{\uua}:R_{\uua}(\iota^{-1}_{\leq}(\uua)) &\longrightarrow R_{\uua}(\iota^{-1}_{\leq}(\uua)) \\
 \uub& \longmapsto \uub''\\
\end{aligned}
$$
where we keep the notation $\uub''$ for the $\mathcal{I}(\uua)$-disjunction of $\uub$ respect to $\up$, i.e., the disjunction respect to $\up$ of the based partitions $\ub_a$ of $\uub$ such that $a\in\mathcal{I}(\uua)$.
We have to check first that $D_{\uua}$ is well defined. Let $\uub\in R_{\uua}(\iota^{-1}_{\leq}(\uua))$, then the vertex type $\uub''$ is well defined and the based partitions of $\uub''$ are all compatible with all of $\underline{Z}^1, \underline{Z}^2, \ldots, \underline{Z}^r$ (disjunction causes no compatibility problems). Also, the number of crosses with $\up$ does not increase when passing to $\uub''$ so we have
$$\cro(\uub'',\up)\leq\cro(\uub,\up)\leq k-1.$$
It is also obvious that $\uub''\leq\uua$ because $\uub''\leq\uub\leq\uua$. Moreover, claim (*) above implies $\uub''\in T_{k-1}$ so we have $\uub''\in R_{\uua}(\iota^{-1}_{\leq}(\uua))$ and $D_{\uua}$ is well defined. It is indeed a well defined poset map by Lemma \ref{lem:refMantieneOrden}.

Now, it is important to note that all the vertices where we apply disjunction were already refined, so $\uub\geq D_{\uua}(\uub)=\uub''$.
Finally, $\uua''\geq \uub''=D_{\uua}(\uub)$ because of Lemma \ref{lem:refMantieneOrden} so $|R_{\uua}(\iota^{-1}_{\leq}(\uua))|$ is (connically) contractible by Lemma \ref{lem:connically}.

\medskip

\noindent{\bf Step 4}: End of the proof.

\smallskip

Now, we have an inclusion map $\iota:T_{k-1}\hookrightarrow R_k(T_k)$ and for any $\uua\in R_k(T_k)-T_{k-1}$, steps 2 and 3 imply

$$
|\iota^{-1}_{\leq}(\uua)|\simeq |R_{\uua}(\iota^{-1}_{\leq}(\uua))|\simeq \ast.
$$
If $\uua\in T_{k-1}$, then $\iota^{-1}_{\leq}(\uua)$ has a maximun so the simplicial realization is contractible too. Therefore using Lemma \ref{lem:posetMap2} we deduce that $\iota$ induces a homotopy equivalence $|T_{k-1}|\simeq|R_k(T_k)|$. As $|R_k(T_k)|\simeq|T_k|$ by step 1, we get $|T_k|\simeq |T_{k-1}|$, as we wanted to prove.
\end{proof}

Observe that if we take $\underline{Z}^1,\ldots,\underline{Z}^r$ as the empty family in Lemma \ref{lem:contS}, then we deduce that $|S|$ is contractible, so we have the second main step in the proof of contractibility of $\MM_\G$.
We are now ready to finish the proof, to do that we have to show that $|R|$ is homotopy equivalent to $|S|$. In this last part of the proof the argument of \cite{MM} applies to our situation without any modification, however we include it here with our notation for the reader's convenience.

Let $v$ be a vertex type in $R$, and set $R(v)=\{w\in R\mid w\geq v\}$. As $R(v)$ is bounded below, $|R(v)|$ is contractible. Let
$\{v_1,\ldots,v_k\}$ be a compatible collection of vertex types of $R$. By Lemma \ref{lem:ComSup}, the set has a unique least upper bound $\nu=v_1\vee\cdots\vee v_k$. As the $v_i$'s lie in $R$ and $w$ is the least upper bound, Lemma \ref{lem:ordenRed} implies that the non trivial based partitions of $\nu$ can not have negative reductivity, so $\nu$ lies in $R$ and therefore
$$
R(v_1)\cap\cdots\cap R(v_k)=R(\nu)
$$
which is a contractible subcomplex of $R$.

\begin{prop}
    $R$ is homotopy equivalent to $S$. \label{rtos}
\end{prop}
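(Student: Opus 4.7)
The plan is to apply Lemma \ref{lem:posetMap2} to the inclusion $\iota: S \hookrightarrow R$ via upper fibers: for every $w \in R$ one must verify that $\iota^{-1}(\st_+(w)) = \{s \in S : s \geq w\}$ has contractible geometric realization. If $w \in S$ then $w$ itself is a minimum of this set, so contractibility is automatic; the substance of the argument therefore lies in the case $w = [V_\G, \underline{\underline{W}}] \in R \setminus S$. For such a $w$, every nontrivial based partition of $\underline{\underline{W}}$ is reductive but not strictly reductive, and therefore has reductivity exactly zero.

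Label the nontrivial based partitions of $\underline{\underline{W}}$ by $\z^1, \ldots, \z^r$. They are pairwise compatible (they come from a single vertex type) and fully reductive, so Proposition \ref{lem:contS} produces a contractible subposet $T \subseteq S$ consisting of the vertex types in $S$ that are compatible with all of the $\z^i$. The key observation is that $\{s \in S : s \geq w\}$ is contained in $T$: if $s = [V_\G, \uua] \geq w$, then $\ua_{a_i} \geq \z^i$ for each index, and combining the pairwise compatibility of the based partitions of $\uua$ with Lemma \ref{lem:ordercomp} gives compatibility of every $\ua_b$ with $\z^i$.

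The plan is then to exhibit $\{s \in S : s \geq w\}$ as a strong deformation retract of $T$ via the self-map $f : T \to T$, $f(s) = s \vee w$. Since $s \in T$ is compatible with each $\z^i$, and trivially with the remaining (trivial) based partitions of $w$, the vertex type $s$ is compatible with $w$ and the join exists by Lemma \ref{lem:ComSup}. Because $s \vee w$ refines $s$, Lemma \ref{lem:ordenRed} keeps every nontrivial based partition reductive and preserves the strictly reductive one witnessing $s \in S$, so $s \vee w \in S$; the same compatibility argument applied to $s \vee w \geq w$ places it in $T$. The map $f$ is therefore a well-defined poset self-map satisfying $f(s) \geq s$, is idempotent, and has image exactly $\{s \in T : s \geq w\} = \{s \in S : s \geq w\}$. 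Lemma \ref{lem:posetmap} then gives that the upper fiber is a strong deformation retract of $|T|$, and hence contractible.

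Having verified the contractibility of every upper fiber, Lemma \ref{lem:posetMap2} yields the homotopy equivalence $|S| \simeq |R|$. I expect the main technical obstacle to be checking that $f(s) = s \vee w$ remains in $T$: this amounts to chaining the compatibility preservation along the order of vertex types, which is precisely the content of Lemma \ref{lem:ordercomp} applied to the refinements of the based partitions of $w$ appearing in $s \vee w$.
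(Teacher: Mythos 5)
Your overall strategy — apply Lemma \ref{lem:posetMap2} directly to the inclusion $\iota\colon S\hookrightarrow R$ and verify contractibility of upper fibers — is a natural and more direct route than the paper's, which instead passes through the nerve $\mathcal{N}(C_R)$ of the covering of $R$ by the sets $R(v)$, where $v$ ranges over vertex types whose nontrivial based partitions are \emph{fully} reductive, and applies Quillen's criterion twice ($f_R$ and $f_S$ both to $\mathcal{N}(C_R)$). However, there is a genuine gap in your argument: you assert that the nontrivial based partitions $\z^1,\ldots,\z^r$ of $\underline{\underline{W}}$ are \emph{fully} reductive, but $w\in R$ only guarantees that each $\z^i$ is reductive, i.e.\ $\red_W(\z^i)\geq 0$. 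Being fully reductive is a strictly stronger requirement: it demands the existence of a reductive automorphism whose \emph{full carrier} is exactly $\z^i$. The reductivity $\red_W(\z^i)=0$ is witnessed by some carried, non-inner $\alpha$ with $\red_W(\alpha)=0$, but the full carrier of $\alpha$ may well be a strictly coarser based partition $\underline{\hat Z}^i<\z^i$ (several petals of $\z^i$ may be conjugated by the same power of the operative factor). So Proposition \ref{lem:contS} does not apply to the collection $\z^1,\ldots,\z^r$, and the contractibility of $T$ is unjustified. The rest of your argument (inclusion $\{s\in S: s\geq w\}\subseteq T$ via Lemma \ref{lem:ordercomp}, the idempotent join retraction $f(s)=s\vee w$, the appeal to Lemmas \ref{lem:ComSup}, \ref{lem:ordenRed}, \ref{lem:posetmap}) is correct but rests on this false premise.

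One cannot simply patch the argument by replacing each $\z^i$ with its fully reductive coarsening $\underline{\hat Z}^i$: the resulting $\hat T$ is contractible by Proposition \ref{lem:contS}, but the retraction $s\mapsto s\vee w$ is no longer well-defined on $\hat T$, since an $s$ compatible with the coarsened $\underline{\hat Z}^i$ need not be compatible with $\z^i$ itself; and retracting onto $\{s:s\geq \hat w\}$ (where $\hat w$ has the coarsened partitions) gives the wrong fiber. This is precisely the obstacle the paper's nerve construction is designed to circumvent: the nerve vertices are compatible collections $\{v_1,\ldots,v_k\}$ of vertex types with fully reductive based partitions, so the hypotheses of Proposition \ref{lem:contS} are available by construction, and the upper fiber $\st_+(v_1\vee\cdots\vee v_k)\cap S$ is handled by the join retraction $h(v)=v\vee v_1\vee\cdots\vee v_k$ from the resulting contractible $T$. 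Your fiber $\{s\in S: s\geq w\}$ for a general $w\in R$ does not come with that structural guarantee.
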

\begin{proof} Put
$$C_R=\{R(v)\mid v\in R\text{ and all the nontrivial based partitions of $v$ are fully reductive}\}.$$
For any $\mu\in R$, if we replace every non trivial based partition of $\mu$ by the full carrier of an automorphisms realizing the reductivity of $\mu$ we get a vertex type $v$ such that $\mu\geq v$. Moreover, $v$ lies in $R$ because by construction it has the same reductivity as $\mu$. This implies that $C_R$ is a covering of $R$ and we may consider its nerve $\mathcal{N}(C_R)$. Recall that the nerve is the poset consisting of sets of pairwise intersecting subcomplexes $R(v)\in C_R$, so we may identify the elements of $\mathcal{N}(C_R)$ with compatible collections of vertex types
    $$
\{v_1,\ldots,v_k\}
    $$
 such that the nontrivial based partitions of each $v_i$ are fully reductive. 

    Let $f_R:R\to \mathcal{N}(C_R)$ be the poset map defined as follows for each $v$ vertex of $R$:
    $$
f_R(v)=\{\nu\in R\ |\ \nu\leq v \text{ and the nontrivial based partitions of } \nu \text{ are fully reductive}\}
    $$
     We have
    $$
|f_R^{-1}(st_+(\{v_1,\ldots,v_k\}))|=|\{v\in\R\ |\ v\geq v_i\ \text{ for any }i\}|=R(v_1\vee \cdots\vee v_k)
    $$
Therefore  Lemma \ref{lem:posetMap2} implies that $|f_R|$ is a homotopy equivalence. Consider now the restriction $f_S$ of $f_R$ to $S$.  
    Now we have
    $$
|f_S^{-1}(st_+(\{v_1,\ldots,v_k\}))|=|\{v\in S\ |\ v\geq v_i\  \text{ for any }i\}|=|st_+(v_1\vee \cdots\vee v_k)\cap S|
    $$

    We claim that $|st_+(v_1\vee \cdots\vee v_k)\cap S|$ is contractible,  using Lemma \ref{lem:posetMap2} this will imply that $|f_S|$ is a homotopy equivalence too so we will deduce $|R|\simeq|S|$.
    
    Given the compatible collection $\{v_1,\ldots,v_k\}$ of vertex types having fully reductive based partitions, let $T$ be the subposet of $S$ consisting of all vertex types which are compatible with $\{v_1,\ldots,v_k\}$. By Lemma \ref{lem:contS}, $|T|$ is contractible. There is a poset map $h:T\to T$ given by $h(v)=v\vee v_1\vee\cdots v_k$. Since $h(v)\geq v$, Lemma \ref{lem:posetmap} implies that $|T|\simeq|h(T)|$ and since $h(T)=st_+(v_1\vee\cdots\vee v_k)\cap S$ we are done.

\end{proof}

This completes step 3 of the proof. In other words, Propositions \ref{prop:rtor} and \ref{rtos}, together with Lemma \ref{lem:contS} yield Theorem B.

\section{Applications}\label{sec:applications}

\subsection{Cohomological dimension}

In this subsection we prove Theorem D. We will use the following result.

\begin{prop}\label{prop:Brown}
    Let $G$ be a group acting over an acyclic CW complex $X$, then
    $$
    \cd G\leq \mathrm{sup }\{ \cd\ G_{\sigma} + \mathrm{dim } \sigma \ |\ \sigma \textup{ a cell in } X\}
    $$
    where $G_\sigma$ is the stabilizer in $G$ of the cell $\sigma$. \label{prop:cdBound}
\end{prop}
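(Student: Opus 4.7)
The plan is to construct an explicit projective resolution of $\Z$ over $\Z G$ whose length is bounded by $d := \sup_\sigma\{\cd G_\sigma + \dim \sigma\}$; this immediately yields $\cd G \leq d$. The argument is the standard one going back to Brown's \emph{Cohomology of Groups}, Chapter VIII.

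First, I would use that $X$ is acyclic to deduce that its augmented cellular chain complex
\[ \cdots \to C_p(X) \to C_{p-1}(X) \to \cdots \to C_0(X) \to \Z \to 0 \]
is an exact sequence of $\Z G$-modules. Choosing orbit representatives, one may decompose each chain module as
\[ C_p(X) \;\cong\; \bigoplus_{\sigma \in \Sigma_p} \Z G \otimes_{\Z G_\sigma} \Z_\sigma, \]
where $\Sigma_p$ is a set of representatives of $G$-orbits of $p$-cells and $\Z_\sigma$ denotes the orientation $\Z G_\sigma$-module of $\sigma$.

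Second, for each $\sigma$ I would fix a projective $\Z G_\sigma$-resolution $P^\sigma_* \to \Z_\sigma$ of length at most $\cd G_\sigma$ (this exists since $\Z_\sigma \cong \Z$ as an abelian group). Induction from $\Z G_\sigma$ to $\Z G$ is exact, because $\Z G$ is free as a right $\Z G_\sigma$-module, and it preserves projectivity, so we obtain a projective $\Z G$-resolution of $\Z G \otimes_{\Z G_\sigma} \Z_\sigma$ of the same length. Summing over $\Sigma_p$ and stacking these resolutions as rows above each $C_p(X)$ yields a first-quadrant double complex of projective $\Z G$-modules.

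Third, I would pass to the total complex. Exactness of the rows, together with exactness of the augmented cellular complex along the other direction, implies through a standard spectral sequence (or acyclic assembly) that this total complex is a projective $\Z G$-resolution of $\Z$, and its length is bounded by $\sup_\sigma(\cd G_\sigma + \dim \sigma) = d$. This gives $\cd G \leq d$, as required. The only delicate point is the bookkeeping around the orientation module $\Z_\sigma$, since $G_\sigma$ may act on $\sigma$ by cell automorphisms that reverse its orientation; once the decomposition of $C_p(X)$ into induced modules $\Z G \otimes_{\Z G_\sigma} \Z_\sigma$ is set up correctly, the remainder is a routine application of the double-complex machinery.
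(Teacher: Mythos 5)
The paper does not prove this proposition at all; it simply remarks that the statement generalises Proposition 2.4(c) of Chapter VIII of Brown's \emph{Cohomology of Groups} (where the generalisation appears as an exercise) and refers to Serre's \emph{Cohomologie des groupes discrets} for a complete argument. Your proposal is the standard way of carrying out that exercise, and the essential ingredients are all correctly identified: the decomposition $C_p(X)\cong\bigoplus_{\sigma\in\Sigma_p}\Z G\otimes_{\Z G_\sigma}\Z_\sigma$, the bound $\operatorname{pd}_{\Z G_\sigma}\Z_\sigma\le\cd G_\sigma$ (valid because $\Z_\sigma$ is $\Z$-free), exactness and projectivity-preservation of induction along $\Z G_\sigma\hookrightarrow\Z G$, and totalisation.

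Two remarks on the third step. First, the exactness that drives the spectral sequence (or acyclic assembly lemma) is that of each \emph{column} $D_{p,\ast}\to C_p(X)$ together with that of the augmented bottom complex $C_\ast(X)\to\Z$; the higher rows $D_{\ast,q}$ for $q>0$ need not be exact, so the phrase ``exactness of the rows'' is a slip, though the conclusion is right. Second, ``stacking'' hides the real construction: the lifts of $\partial\colon C_p\to C_{p-1}$ provided by the comparison theorem are only unique up to chain homotopy, so the squares need not anticommute on the nose, and one must run a horseshoe/Cartan--Eilenberg argument while simultaneously keeping the $p$-th column of length at most $d-p$ — genuine bookkeeping that deserves a sentence. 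A cleaner route, and presumably the one Brown and Serre intend, is to skip the resolution of $\Z$ entirely and use the equivariant cohomology spectral sequence $E_1^{p,q}=\prod_{\sigma\in\Sigma_p}H^q(G_\sigma;\Z_\sigma\otimes N)\Rightarrow H^{p+q}(G;N)$, which converges because $X$ is acyclic; since $E_1^{p,q}=0$ whenever $q>\cd G_\sigma$ for all $\sigma\in\Sigma_p$, one gets $H^n(G;N)=0$ for every $n>d$ and every coefficient module $N$, hence $\cd G\le d$. (For the application at hand $\MM_\G$ is finite-dimensional, so one could also avoid spectral sequences by a downward dimension shift on the kernels $Z_p=\ker(C_p\to C_{p-1})$, using $\operatorname{pd}Z_{p-1}\le\max(\operatorname{pd}Z_p+1,\operatorname{pd}C_p)$.)
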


This is a generalization of Proposition 2.4(c) in Chapter VIII of \cite{Br} (this generalization appears as an exercise), and a proof can be found in \cite{Ser}. As $\MM_\G$ is contractible, it is acyclic, and to compute the cohomological dimension of the cell stabilizers of the action of $\POut(A_\G)$ we will use the notion of rank.

\begin{defi}
    Let $\uua$ be a vertex type. The {\sl rank} of $\uua$ is
    $$r(\uua)=\sum_{a\in V_\G}\left(l(\ua_{a})-1\right)$$
     note that this rank definition is equivalent to the rank of $\uua$ as an element of the poset $\Wh_\G$. (Recall that $l(\ua_a)$ is the number of petals of the based partition $\ua_a$).
\end{defi}

\begin{lem}\label{lem:rankstab}
    The stabilizer under the action of $\POut(A_\G)$  of a vertex $v=[X,\uua]\in\MM_\G$ is free abelian of rank $r(\uua)$.
\end{lem}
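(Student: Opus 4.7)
First, reduce to the case $X=V_\G$: for any $\gamma\in\PAut(A_\G)$ with $\gamma(V_\G)=X$ one has $[X,\uua]=\gamma\cdot[V_\G,\uua]$, so the stabilizers of $[X,\uua]$ and $[V_\G,\uua]$ in $\POut(A_\G)$ are conjugate and have the same rank. Set $H:=\Stab_{\POut(A_\G)}[V_\G,\uua]$; by Remark \ref{rem:cosetposet} and Lemma \ref{lem:freeabelian}, $H$ is the free abelian subgroup of $\POut(A_\G)$ consisting of outer classes of automorphisms carried by $\uua$, so only its rank is in question.

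For the upper bound $\operatorname{rank}H\leq r(\uua)$, observe that $H$ is generated by the outer classes $\overline{C^a_P}$ as $a$ runs over $V_\G$ and, for each $a$, $P$ runs over the $l(\ua_a)$ petals of $\ua_a$. These generators pairwise commute in $\POut(A_\G)$ by Lemma \ref{lem:compatible1}. Moreover, for each fixed $a$ the partial conjugations $\{C^a_P\}_P$ have pairwise disjoint supports whose union is $V_\G-\st_\G(a)$, so their product equals conjugation by $a$ and is trivial in $\POut(A_\G)$. These $|V_\G|$ relations are manifestly independent in $\Z^{\sum_a l(\ua_a)}$, so $H$ is a quotient of $\Z^{\sum_a l(\ua_a)-|V_\G|}=\Z^{r(\uua)}$.

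The main work is the matching lower bound, which I would establish by exhibiting $r(\uua)$ homomorphisms $\POut(A_\G)\to\Z$ that separate an explicit set of $r(\uua)$ generators. For $a\in V_\G$ and $b\in V_\G-\st_\G(a)$, write $\alpha(b)=g_b\,b\,g_b^{-1}$ for $\alpha\in\PAut(A_\G)$; the element $g_b$ is unique modulo the centralizer $C_{A_\G}(b)=\langle\st_\G(b)\rangle$ (a standard RAAG fact). Let $\epsilon_a:A_\G\to\Z$ be the homomorphism sending $a$ to $1$ and every other vertex to $0$. Since $a\notin\st_\G(b)$, the value $n_b^a(\alpha):=\epsilon_a(g_b)$ is well-defined. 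A short calculation, together with the observation that $\epsilon_a$ is invariant under pure symmetric automorphisms (as each $\alpha(v)$ is a conjugate of $v$), shows that $n_b^a:\PAut(A_\G)\to\Z$ is a homomorphism. Its value on inner automorphisms depends on the inner element but not on $b$, so each difference $n_b^a-n_{b'}^a$ descends to a homomorphism $\POut(A_\G)\to\Z$.

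To conclude, for each $a$ with $l(\ua_a)\geq 2$ I would choose a vertex $b_i^a$ in the $i$-th petal $P_i$ of $\ua_a$, $i=1,\ldots,l(\ua_a)$, and set $\phi_i^a:=n_{b_i^a}^a-n_{b_{l(\ua_a)}^a}^a$ for $i<l(\ua_a)$. A direct computation gives $\phi_i^a(\overline{C^a_{P_j}})=\delta_{ij}$ for $j<l(\ua_a)$ and $\phi_i^a(\overline{C^c_Q})=0$ for $c\neq a$. Consequently, the combined map $\POut(A_\G)\to\Z^{r(\uua)}$ sends the set of $r(\uua)$ generators $\{\overline{C^a_{P_i}}:a\in V_\G,\,1\leq i<l(\ua_a)\}$ to a basis of $\Z^{r(\uua)}$, so these generators are $\Z$-linearly independent and $\operatorname{rank}H\geq r(\uua)$. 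The main obstacle is verifying cleanly that $n_b^a$ is a well-defined homomorphism and that its $b$-dependence descends to $\POut(A_\G)$; both points rest on the centralizer formula for vertices in RAAGs and the conjugacy-invariance of $\epsilon_a$.
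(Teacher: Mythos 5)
Your proof is correct, and for the crucial lower bound it takes a genuinely different (and more rigorous) route than the paper. Both arguments start the same way: reduce to $X=V_\G$, observe that the stabilizer is free abelian by Lemma \ref{lem:freeabelian}, and note the natural generating set $\{\overline{C^a_P}\}$ subject to the $|V_\G|$ relations that the product of the $\overline{C^a_P}$ over the petals of $\ua_a$ is trivial in $\POut(A_\G)$. The paper then asserts, with a brief back-reference to the proof of Lemma \ref{lem:freeabelian}, that the rank is \emph{precisely} the sum $\sum_a(l(\ua_a)-1)$ — effectively asserting that the relations above are the only ones and that the subgroups for distinct $a$ decompose as a direct sum — but it does not verify this. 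You, by contrast, \emph{prove} the lower bound by exhibiting an explicit family of homomorphisms $\phi^a_i=n^a_{b_i}-n^a_{b_{l(\ua_a)}}:\POut(A_\G)\to\Z$ (using the abelianization $\epsilon_a$, the conjugator $g_b$ defined modulo $C_{A_\G}(b)=\langle\st_\G(b)\rangle$, and the fact that $\epsilon_a\circ\alpha=\epsilon_a$ for $\alpha\in\PAut(A_\G)$) and checking that the combined map sends the $r(\uua)$ chosen generators $\overline{C^a_{P_i}}$, $1\leq i<l(\ua_a)$, to the standard basis of $\Z^{r(\uua)}$. I verified the cocycle computation $g_b^{\alpha\beta}\equiv\alpha(g_b^\beta)g_b^\alpha \pmod{C_{A_\G}(b)}$, the well-definedness of $n^a_b$ (using $a\notin\st_\G(b)$), the vanishing of $n^a_b-n^a_{b'}$ on $\Inn(A_\G)$, and the evaluation $\phi^a_i(\overline{C^c_{P_j}})=\delta_{ac}\delta_{ij}$; all are correct. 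Your version is self-contained and closes a genuine gap that the paper's terse assertion leaves open, at the cost of a slightly longer argument.
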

\begin{proof} We may assume that $v$ lies in the star of the nuclear vertex $[V_\G,\OO]$ because this star is a fundamental domain for the action. In other words, we may assume $X=V_\G$. The fact that the stabilizer is free abelian follows from Lemma \ref{lem:freeabelian}. The proof of that result implies that the rank of this free abelian group is precisely the sum of ranks of the free abelian groups generated by the images in $\POut(A_\G)$ of the partial conjugations carried by each of the based partitions of $\uua$. Let $\ua_a$ be one of those based partitions. The subgroup of $\PAut(A_\G)$ generated by partial conjugations carried by $\ua_a$ has rank $l(\ua_{a})$. The product of the partial conjugations based on the petals of $\ua_a$ is an inner automorphism so when we project to the outer automorphisms group we get rank $l(\ua_{a})-1$. Summing up yields $r(\uua)$.

\end{proof}

\noindent{\sl Proof of Theorem C.} Let $\sigma$ be a cell of $\MM_\G$. Then $\sigma$ is of the form
$$\sigma:v_0<v_1<\ldots<v_m$$
where $m=\mathrm{dim}(\sigma)$ and the $v_i=[X,\uua^i]$'s are vertices of $\MM_\G$ with $\uua^0<\uua^1<\ldots<\uua^m$.
Again, to compute the cohomological dimension of the stabilizer of $\sigma$ we may assume that $\sigma$ lies in the star of the nuclear vertex $[V_\G,\OO]$, so $X=V_\G$. The stabilizer $\POut(A_\G)_\sigma$ of $\sigma$ is the intersection of the stabilizers of the vertices $v_i=[V_\G,\uua^i]$, which are the subgroups of outer automorphisms carried by $\uua^i$ and by \ref{lem:ordercarrier} this is precisely the stabilizer of $v_0=[V_\G,\uua^0]$. By Lemma \ref{lem:rankstab} this group is free abelian of rank $r(\uua^0)$
So we have
$$\cd\POut(A_\G)_\sigma+\dim\sigma=r(\uua^0)+m\leq r(\uua^m)$$
where the inequality follows from the fact that whenever $\uua^i<\uua^{i+1}$ then the rank increases at least in one, as there is at least one petal of a based partition of $\uua^i$ that splits in two in $\uua^{i+1}$.
Therefore, Proposition \ref{prop:Brown} implies
    $$
   \cd\POut(A_\G)\leq \max\{r(\uua)\mid\uua\in \Wh_\G\}.
   $$
Moreover, we can choose some vertex type $\uua$ such that $r(\uua)$ is the maximum in the right hand side. Then the stabilizer $H$ of $[V_\G,\OO]$ is a free abelian group whose rank realises the cohomological dimension of the group $\POut(A_\G).$ 
\qed

\subsection{\texorpdfstring{$\ell$}-2 Betti numbers}
\medskip

In this subsection we will prove Theorem D which follows from the following Theorem. In \cite{McCM}, McCammond and Meier sketch a proof based on  a result outlined by
Davis and Leary in \cite{DavisLeary} and in a more general form by Davis, Januszkiewicz and Leary in
Section 3 of \cite{DavisJanusLeary}.

\begin{teo}\cite[Theorem 8.1]{McCM}
    Let $G$ be a group of finite type admitting a cocompact action on a contractible complex $X$, with strong fundamental domain $F$. Let $X[\infty]$ be the subcomplex whose isotropy groups are infinite, and let $F[\infty]=F \cap X[\infty]$. Assume
    \begin{itemize}
        \item [i)] Each isotropy group $G_x$ is trivial or satisfies $b_p^{(2)}\left(G_x\right)=0$ for $p \geq 0$; and
        \item[ii)] The fundamental domain $F$ is the cone over $F[\infty]$
    \end{itemize}

Then the von Neumann dimensions of $H^i(X ; \mathcal{N}(G))$ and $\mathcal{N}(G) \otimes \bar{H}^{i-1}(F[\infty])$ are the same. If in addition to i) and ii) we have
\begin{itemize}
    \item [iii)] $G$ has no non-trivial element whose centralizer has finite index in $G$,
\end{itemize}
then

$$
\mathcal{H}^i(G) \simeq \ell^2(G) \otimes \bar{H}^{i-1}(F[\infty]).
$$

\end{teo}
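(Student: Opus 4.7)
The plan is to build an equivariant spectral sequence for the action of $G$ on $X$ with coefficients in the group von Neumann algebra $\mathcal{N}(G)$, compute its $E_1$ page in terms of cell stabilizers, and then use hypotheses (i) and (ii) to cut it down to a complex computed from $F[\infty]$. Concretely, since $X$ is contractible and the action is cocompact, I would identify $H^{\ast}(X;\mathcal{N}(G))$ with the equivariant cohomology $H^{\ast}_G(X;\mathcal{N}(G))$, and use the isotropy spectral sequence attached to the filtration by skeleta (or equivalently by the strong fundamental domain),
\[
E_1^{p,q} \;=\; \bigoplus_{\sigma\in F^{(p)}} H^q\bigl(G_\sigma;\mathcal{N}(G)\bigr) \;\Longrightarrow\; H^{p+q}_G\bigl(X;\mathcal{N}(G)\bigr),
\]
where $F^{(p)}$ is the set of $p$-cells of $F$, each a strict orbit representative. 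This is the same tool used in Davis--Leary and Davis--Januszkiewicz--Leary.

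First I would establish the dimension vanishing on the $E_1$ page for cells with nontrivial stabilizer. The key identity (due to L\"uck) is that for any subgroup $H\leq G$,
\[
\dim_{\mathcal{N}(G)} H^q\bigl(H;\mathcal{N}(G)\bigr) \;=\; \dim_{\mathcal{N}(H)} H^q\bigl(H;\mathcal{N}(H)\bigr) \;=\; b_q^{(2)}(H),
\]
obtained by flat (in the dimension sense) base change along $\mathcal{N}(H)\hookrightarrow \mathcal{N}(G)$. Combined with hypothesis (i), this forces $\dim_{\mathcal{N}(G)} E_1^{p,q}$ to be concentrated on cells with trivial stabilizer and in degree $q=0$, where it contributes a copy of $\mathcal{N}(G)$. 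Taking dimensions everywhere (which is exact in L\"uck's sense), the spectral sequence degenerates to the cellular cochain complex $C^{\ast}(F\setminus F[\infty];\mathcal{N}(G))$.

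Next I would exploit the cone hypothesis (ii). Write $F=CF[\infty]$ with apex $v$; then the cells of $F\setminus F[\infty]$ are exactly $\{v\}$ together with open cones on cells of $F[\infty]$. Hence the pair $(F,F[\infty])$ has relative cellular cohomology identified with $C^{\ast}(F\setminus F[\infty])$, and the long exact sequence of the pair together with contractibility of $F$ gives $H^i(F,F[\infty])\cong \overline{H}^{i-1}(F[\infty])$ for $i\geq 1$. Tensoring with $\mathcal{N}(G)$ and taking von Neumann dimensions yields
\[
\dim_{\mathcal{N}(G)} H^i\bigl(X;\mathcal{N}(G)\bigr) \;=\; \dim_{\mathcal{N}(G)}\bigl(\mathcal{N}(G)\otimes \overline{H}^{i-1}(F[\infty])\bigr),
\]
proving the first assertion.

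For the second statement I would bootstrap from dimension equality to an actual isomorphism of reduced $\ell^2$-cohomology by running the same spectral sequence with coefficients in $\ell^2(G)$. Here differentials and extensions can a priori obstruct a clean identification, and this is where hypothesis (iii) enters: one shows that the potentially troublesome terms come from the infinite cell stabilizers, and that the image of $\mathcal{N}(G_\sigma)\hookrightarrow\mathcal{N}(G)$ contributes a nonzero $\ell^2$-class only when $G_\sigma$ contains an element whose $G$-centralizer has finite index, which (iii) forbids. After verifying that all differentials and extension classes vanish on $\ell^2$-cohomology, one recovers the Hilbert module identification $\mathcal{H}^i(G)\cong \ell^2(G)\otimes \overline{H}^{i-1}(F[\infty])$. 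I expect this last step, upgrading from dimension to isomorphism via condition (iii), to be the main obstacle, since it requires a careful Hilbert-module analysis of the spectral sequence rather than the purely dimension-theoretic argument used for the first part.
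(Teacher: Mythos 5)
The paper does not prove this statement; it is cited verbatim from McCammond--Meier \cite{McCM}, with a pointer to Davis--Leary and Davis--Januszkiewicz--Leary for the underlying argument, so there is no in-paper proof to compare against. Evaluating your proposal on its own merits, the outline of the first half (the dimension equality) is essentially the standard Davis--(Januszkiewicz--)Leary argument: compute via the equivariant cochain complex or isotropy spectral sequence, use L\"uck's faithful dimension-flatness of $\mathcal{N}(H)\hookrightarrow\mathcal{N}(G)$ to show that cells with nontrivial (hence infinite, since $b_0^{(2)}(G_\sigma)=0$ forces $G_\sigma$ infinite) stabilizers contribute zero dimension, and then identify the remaining free part with $C^{\ast}(F,F[\infty];\mathcal{N}(G))$ using the cone hypothesis. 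One caveat: phrasing this as ``the spectral sequence degenerates to'' the smaller cochain complex is imprecise; what L\"uck's exactness gives you is equality of dimensions after the fact, not a degeneration in the usual sense, so you should really just track dimensions through the complex directly rather than appealing to degeneration.

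However, your account of why hypothesis (iii) upgrades dimension equality to a Hilbert-module isomorphism is not correct as stated. Condition (iii) -- no nontrivial element of $G$ has a centralizer of finite index -- is precisely the ICC condition, which is equivalent to $\mathcal{N}(G)$ being a II$_1$ factor. For a II$_1$ factor, the von Neumann dimension is a complete isomorphism invariant for Hilbert modules, so the first assertion immediately forces the second. It is not a matter of showing that individual stabilizers fail to contribute ``a nonzero $\ell^2$-class''; that analysis was already done (dimensionally) in the first half. Attempting instead to rerun the spectral sequence directly with $\ell^2(G)$-coefficients, as you suggest, introduces genuine difficulties (unreduced versus reduced cohomology, non-closed images, non-finitely-generated modules) and is not the route taken in the literature. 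So the gap here is concrete: replace the hand-wavy ``troublesome stabilizer'' mechanism by the ICC~$\Rightarrow$~factor~$\Rightarrow$~dimension-determines-isomorphism argument, and the second half goes through without any new spectral sequence analysis.
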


We can now proceed with the proof of Theorem D:

\medskip

\noindent{\sl Proof of Theorem D.} Let $G=\POut(\A)$, $X=\MM_\G$ and $F=|\Wh_\G|$. We have $F[\infty]=|\Wh_\G^0|$, where $\Wh_\G^0$ is the poset obtained by removing from $\Wh_\G$ the nuclear vertex (the only one with finite stabilizer). Condition i) holds since the stabilizers are free abelian and condition ii) is obvious, so we have the first part of the statement. If, in addition, we impose condition iii) (which is not true in general for $\POut(\A)$) we obtain the last isomorphism.

\end{document}